\documentclass{conm-p-l}

\usepackage{amssymb}
\usepackage{graphicx}
\usepackage{verbatim}
\usepackage{tikz}
\usepackage{longtable}
\setlength{\LTcapwidth}{4.75in}
\usepackage[normalsize,sc]{caption}

\copyrightinfo{2008}{American Mathematical Society}

\newtheorem{theorem}{Theorem}[section]
\newtheorem{lemma}{Lemma}[section]
\newtheorem{proposition}{Proposition}[section]
\newtheorem{corollary}{Corollary}[section]
\newtheorem{question}{Question}[section]

\theoremstyle{definition}
\newtheorem{definition}{Definition}[section]
\newtheorem{example}{Example}[section]

\theoremstyle{remark}
\newtheorem{remark}{Remark}[section]

\numberwithin{equation}{section}
\numberwithin{figure}{section}
\numberwithin{table}{section}

\DeclareMathOperator{\codim}{codim}

\newcommand{\RRR}{\mathbb{R}}
\newcommand{\CCC}{\mathbb{C}}
\newcommand{\ND}{\mathcal{N}}
\newcommand{\PD}{\mathit{PD}}
\newcommand{\pd}{\mathit{pd}}
\newcommand{\cor}{\mathit{cor}}

\newcommand{\ind}{\mbox{$\perp \kern-5.5pt \perp$}}
\newcommand{\indsub}{{\mbox{\scriptsize$\perp \kern-4.5pt \perp$}}} 

\newcommand{\rel}[1]{\mbox{$\langle \kern-2.5pt \langle$} #1 
                     \mbox{$\rangle \kern-2.5pt \rangle$}}

\tikzstyle{dual}=[circle,draw=blue!50,fill=blue!20,thick,
inner sep=0pt,minimum size=3mm]
\tikzstyle{usual}=[circle,draw=black!50,fill=black!20,thick,
inner sep=0pt,minimum size=3mm]

\begin{document}

\title[Gaussian conditional independence models]{Smoothness of Gaussian
  conditional independence models}

\author[M. Drton]{Mathias Drton}
\address{Department of Statistics, 5734 S.~University Ave, Chicago, IL  60637}
\email{drton@uchicago.edu} \thanks{This material is based upon work
  supported by the National Science Foundation under Grant No. DMS-0746265.
  Mathias Drton was also supported by an Alfred P. Sloan Fellowship.}

\author[H. Xiao]{Han Xiao}
\address{Department of Statistics, 5734 S.~University Ave, Chicago, IL  60637}
\email{xiao@galton.uchicago.edu}
\thanks{}

\subjclass[2000]{Primary 62H05}

\date{}

\begin{abstract}
  Conditional independence in a multivariate normal (or Gaussian)
  distribution is characterized by the vanishing of subdeterminants of
  the distribution's covariance matrix. Gaussian conditional
  independence models thus correspond to algebraic subsets of the cone
  of positive definite matrices. For statistical inference in such
  models it is important to know whether or not the model contains
  singularities.  We study this issue in models involving up to four
  random variables.  In particular, we give examples of conditional
  independence relations which, despite being probabilistically
  representable, yield models that non-trivially decompose
  into a finite union of several smooth submodels.
\end{abstract}

\maketitle

\section{Introduction}
\label{sec:introduction}

Conditional independence (CI) is one of the most important notions of
multivariate statistical modelling.  Many popular statistical models
can be thought of as being defined in terms of CI constraints.  For
instance, the popular graphical models are obtained by identifying a
considered set of random variables with the nodes of a graph and
converting separation relations in the graph into CI statements
\cite{lauritzen:1996}.  Despite the use of different graphs and
separation criteria, graphical models present only a small subset of
the models that can be defined using conditional independence
\cite{studeny:2005}.  It is thus of interest to explore to which
extent more general collections of CI constraints may furnish other
well-behaved statistical models.  In this paper we pursue this problem
under the assumption that the considered random vector is Gaussian,
that is, it has a joint multivariate normal distribution.  A precise
formulation of the problem is given in Question~\ref{q:smooth} below.

Let $X=(X_1,\dots,X_m)$ be a Gaussian random vector with mean vector
$\mu$ and covariance matrix $\Sigma$, in symbols,
$X\sim\ND_m(\mu,\Sigma)$.  All covariance matrices appearing in this
paper are tacitly assumed positive definite, in which case $X$ is also
referred to as {\em regular Gaussian}.  We denote the subvector given
by an index set $A\subseteq [m]:=\{1,\dots,m\}$ by $X_A$.  For three
pairwise disjoint index sets $A,B,C \subseteq [m]$, we write $A \ind B
\,|\, C$ to abbreviate the conditional independence of $X_A$ and $X_B$
given $X_C$.  We use concatenation of symbols to
denote unions of index sets, that is, $AB=A\cup B$, and make
no distinction between indices and singleton index sets such that
$i=\{i\}$ and 
$ij=\{i,j\}$.  A general introduction to conditional independence can
be found in \cite{studeny:2005}, but since this paper is solely
concerned with the Gaussian case the reader may also simply treat the
following proposition as a definition.  It states that conditional
independence in a Gaussian random vector is an algebraic constraint on
its covariance matrix.  For a proof see for example
\cite[\S3.1]{drton:2009}.

\begin{proposition}
  \label{prop:cic}
  Let $X \sim \ND_m(\mu,\Sigma)$ be a (regular) Gaussian random vector and
  $A,B,C \subset [m]$ pairwise disjoint index sets.  Then $A\ind B\,|\,C$
  if and only if the submatrix $\Sigma_{AC,BC}$ has rank equal to the
  cardinality of $C$.  Moreover, $A\ind B\,|\,C$ if and only if $i\ind
  j\,|\,C$ for all $i\in A$ and $j\in B$.
\end{proposition}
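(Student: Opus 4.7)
The plan is to reduce conditional independence in the Gaussian case to the vanishing of a Schur complement, and then to identify that Schur complement with the rank deficiency of the submatrix $\Sigma_{AC,BC}$.

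First I would recall the well-known fact that the conditional distribution of $X_{AB}$ given $X_C=x_C$ is again Gaussian, with conditional covariance matrix equal to the Schur complement
\[
\Sigma_{AB,AB} - \Sigma_{AB,C}\,\Sigma_{C,C}^{-1}\,\Sigma_{C,AB},
\]
which is well defined since $\Sigma_{C,C}$ is positive definite, hence invertible. In particular, the conditional cross-covariance of $X_A$ and $X_B$ given $X_C$ is
\[
\Sigma_{A,B\mid C} \;:=\; \Sigma_{A,B} - \Sigma_{A,C}\,\Sigma_{C,C}^{-1}\,\Sigma_{C,B}.
\]
Since joint Gaussianity is preserved under conditioning, the random vectors $X_A$ and $X_B$ are conditionally independent given $X_C$ if and only if their conditional covariance vanishes, i.e.\ $\Sigma_{A,B\mid C}=0$.

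Next I would exploit the block decomposition
\[
\Sigma_{AC,BC} \;=\; \begin{pmatrix} \Sigma_{A,B} & \Sigma_{A,C} \\ \Sigma_{C,B} & \Sigma_{C,C} \end{pmatrix}.
\]
Because $\Sigma_{C,C}$ is invertible of size $|C|$, elementary row and column operations (subtracting $\Sigma_{A,C}\Sigma_{C,C}^{-1}$ times the bottom block row from the top) bring this matrix into block-triangular form with diagonal blocks $\Sigma_{A,B\mid C}$ and $\Sigma_{C,C}$. Hence
\[
\rk \Sigma_{AC,BC} \;=\; |C| + \rk \Sigma_{A,B\mid C}.
\]
Combining this identity with the previous paragraph gives $A\ind B\,|\,C \iff \Sigma_{A,B\mid C}=0 \iff \rk\Sigma_{AC,BC}=|C|$, which is the first claim.

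For the second claim, observe that the entry of $\Sigma_{A,B\mid C}$ in position $(i,j)$ is exactly $\Sigma_{\{i\}\{j\}\mid C}$, the scalar conditional covariance of $X_i$ and $X_j$ given $X_C$. Thus $\Sigma_{A,B\mid C}=0$ holds if and only if $\Sigma_{\{i\}\{j\}\mid C}=0$ for every $i\in A$ and $j\in B$, which by the first part is equivalent to $i\ind j\,|\,C$ for all such pairs. There is no real obstacle here beyond carefully invoking the Schur complement identity; the only subtlety is making sure the argument that "zero conditional covariance implies conditional independence" is applied in the Gaussian setting, where it is valid precisely because the conditional distribution is again Gaussian.
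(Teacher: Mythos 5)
Your proof is correct. The paper itself gives no proof of Proposition~\ref{prop:cic}, deferring instead to the cited reference \cite[\S3.1]{drton:2009}; your argument --- reducing $A\ind B\,|\,C$ to the vanishing of the conditional cross-covariance $\Sigma_{A,B} - \Sigma_{A,C}\Sigma_{C,C}^{-1}\Sigma_{C,B}$ and then using block elimination with the invertible block $\Sigma_{C,C}$ to get $\rk \Sigma_{AC,BC} = |C| + \rk\,(\Sigma_{A,B}-\Sigma_{A,C}\Sigma_{C,C}^{-1}\Sigma_{C,B})$ --- is exactly the standard Schur-complement proof that the reference supplies, and the entrywise reduction to pairwise couples follows correctly from it.
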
 

The proposition clarifies in particular that one may restrict
attention to pairwise statements $i \ind j \,|\, C$.  We remark that
this is also true for arbitrary (non-Gaussian) random vectors as
it can still be shown that $A\ind B \,|\, C$ if and only if
\begin{equation*}
  \label{pair}
  i\ind j \,|\, D \quad\text{for all}\quad i\in A,\: j\in B,\: C\subseteq
  D\subseteq ABC\setminus ij;
\end{equation*}
see \cite[Lemma 3]{matus:1992}.  Since $i \ind j \,|\, C$ if and only
if $j \ind i \,|\, C$, pairwise statements can also be represented
using an index set couple $ij|C$ that groups a two-element set
$ij\subseteq[m]$ and a conditioning set $C\subseteq[m]\setminus ij$.
Following \cite{matus:2007} we refer to these couples as {\em
  conditional independence couples}.

A {\em conditional independence relation} is a set of CI couples. We
write $\mathcal{R}(m)$ for the maximal relation comprising all
$\binom{m}{2}\cdot 2^{m-2}$ CI couples over the set $[m]$.  A CI
relation $\mathcal{L}\subseteq\mathcal{R}(m)$ determines a {\em
  Gaussian conditional independence model}, namely, the family of all
multivariate normal distributions for which $i\ind j\,|\,C$ whenever
$ij|C\in \mathcal{L}$.  
Since conditional independence constrains only
the covariance matrix of a Gaussian random vector, the Gaussian model
given by $\mathcal{L}$ corresponds to the algebraic subset
\begin{equation}
  \label{eq:Vpd}
  V_\pd(\mathcal{L}) = \big\{\, \Sigma\in \PD_m \::\:
  \det(\Sigma_{iC,jC})=0 \;\text{for all}\; ij|C\in \mathcal{L}\, \big\}
\end{equation}
of the cone of positive definite $m\times m$-matrices, here denoted
by $\PD_m$.


Standard large-sample asymptotic methodology can be applied for statistical
inference in a Gaussian CI model if $V_\pd(\mathcal{L})$ is a smooth
manifold.  However, such techniques may fail under the presence of
singularities \cite{drton:lrt}, which leads to the following natural
question:

\begin{question} 
  \label{q:smooth}
  For which conditional independence relations
  $\mathcal{L}\subseteq\mathcal{R}(m)$ is the associated set
  $V_\pd(\mathcal{L})$ a smooth manifold?
\end{question}

If $m=2$ the question is trivial because the set $V_\pd(\mathcal{L})$ is
either the positive definite cone or the set of diagonal $2\times
2$-matrices.  For $m=3$, smoothness can fail in precisely one well-known
way; compare (\ref{eq:weaktran}) and Proposition~\ref{thm:ijcd} below.

\begin{proposition}
  \label{prop:m=3}
  For $m=3$,
  the sets $V_\pd(\mathcal{L})$ are smooth manifolds unless the
  conditional independence relation $\mathcal{L}$ is equal to $\{ij,\,
  ij|k\}$ for distinct indices $i,j,k$.
\end{proposition}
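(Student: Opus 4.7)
My plan is to classify the varieties $V_\pd(\mathcal{L})$ using positive definiteness to reduce each conditional equation to a linear (marginal) one in the presence of enough companions. I would first handle the exceptional relation, and then argue that every other relation reduces to a linear section of $\PD_3$ or to a single smooth conditional hypersurface.

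For $\mathcal{L}=\{ij,ij|k\}$, combining $\sigma_{ij}=0$ with $\sigma_{ij}\sigma_{kk}-\sigma_{ik}\sigma_{jk}=0$ forces $\sigma_{ik}\sigma_{jk}=0$, yielding the decomposition
\[
V_\pd(\{ij,ij|k\})=V_\pd(\{ij,ik\})\cup V_\pd(\{ij,jk\}),
\]
a union of two four-dimensional linear sections of $\PD_3$ meeting along the three-dimensional set of diagonal matrices. At any such intersection point, the gradient of the quadratic defining polynomial $\sigma_{ij}\sigma_{kk}-\sigma_{ik}\sigma_{jk}$ becomes proportional to the gradient of the linear one $\sigma_{ij}$, so the Jacobian of the two defining equations has rank one; the Zariski tangent space is therefore five-dimensional, strictly larger than the four-dimensional local dimension of either branch, and $V_\pd(\mathcal{L})$ fails to be a smooth manifold at such points.

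For every other $\mathcal{L}\subseteq\mathcal{R}(3)$ the key reductions use positivity of the $2\times 2$ principal minors of $\Sigma\in\PD_3$. For two conditional couples, say $\{12|3,13|2\}$, solving the second for $\sigma_{13}$ and substituting into the first gives $\sigma_{12}(\sigma_{22}\sigma_{33}-\sigma_{23}^2)=0$, whence $\sigma_{12}=0$ by positivity of the minor; a symmetric elimination yields $\sigma_{13}=0$, so $V_\pd(\{12|3,13|2\})=V_\pd(\{12,13\})$ is linear. The same elimination handles all three pairs of conditional couples and their triple, reducing each to a purely marginal relation. A marginal $ij\in\mathcal{L}$ combined with a conditional $ab|c\in\mathcal{L}$ with $\{a,b\}\neq\{i,j\}$ reduces, using merely $\sigma_{cc}>0$, to a further marginal $\sigma_{\alpha\beta}=0$. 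Finally, if $\mathcal{L}$ properly contains $\{ij,ij|k\}$, the extra couple produces one of the linear equations $\sigma_{ik}=0$ or $\sigma_{jk}=0$, collapsing the singular union above to one of its smooth linear branches.

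Combining these reductions, every $\mathcal{L}\neq\{ij,ij|k\}$ gives $V_\pd(\mathcal{L})$ equal either to a linear section of $\PD_3$ (automatically a smooth submanifold, since $\PD_3$ is open in the space of symmetric matrices) or to the single hypersurface $V_\pd(\{ij|k\})$, which is smooth because the partial derivative of its defining polynomial with respect to $\sigma_{ij}$ equals $\sigma_{kk}>0$. I expect the main obstacle to be organizing the case analysis so that every $\mathcal{L}\subseteq\mathcal{R}(3)$ (of which there are $2^6=64$, and far fewer up to the natural $S_3$-symmetry on indices) falls cleanly under one of the above scenarios; the calculations themselves are elementary but the bookkeeping needs care.
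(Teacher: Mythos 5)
Your argument is correct and complete, and it is necessarily more explicit than the paper's treatment: the paper states Proposition~\ref{prop:m=3} without proof, merely pointing to weak transitivity (\ref{eq:weaktran}) for the decomposition of $V_\pd(\{ij,\,ij|k\})$ into the two branches $V_\pd(\{ij,ik\})$ and $V_\pd(\{ij,jk\})$, and to the Jacobian-rank analysis of Proposition~\ref{thm:ijcd} (applied with $C_1=\emptyset$, $C_2=\{k\}$) for locating the singularities at the diagonal matrices. Your eliminations for the remaining relations are precisely the $m=3$ instances of contraction (\ref{c4}) and intersection (\ref{c5}), and they do reduce every $\mathcal{L}\neq\{ij,\,ij|k\}$ either to a linear section of the open cone $\PD_3$ or to the single smooth hypersurface $V_\pd(\{ij|k\})$. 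The exhaustiveness of your case analysis is easy to certify for $m=3$ because each pair $ij$ admits exactly one conditional couple $ij|k$, so the only relation that escapes all of your reductions is exactly $\{ij,\,ij|k\}$. What your elementary route buys is a self-contained proof; what the paper's pointers buy is that the same two ingredients (weak transitivity and Proposition~\ref{thm:ijcd}) generalize to repeated pairs $ij|C_1$, $ij|C_2$ for arbitrary $m$.

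One caution on the non-smoothness step: the inference ``the Jacobian of the two chosen defining polynomials has rank one, so the Zariski tangent space is five-dimensional, so the set is not a smooth manifold'' is not valid in isolation. Rank-deficiency of a particular generating set does not preclude the real positive-definite locus from being a smooth manifold --- the paper makes exactly this point in Section~\ref{subsec:singpoints}, citing \cite[Example 3.3.12(b)]{bochnak:1998}. But your proof does not need that inference: you have already shown that $V_\pd(\{ij,\,ij|k\})$ is the union of two distinct four-dimensional linear sections of $\PD_3$ meeting along the three-dimensional set of diagonal matrices, and each branch contains points arbitrarily close to any diagonal positive definite matrix that lie on that branch only. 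A smooth submanifold through such a diagonal matrix would therefore have to be four-dimensional while its tangent space contained both four-planes, hence their five-dimensional span --- a contradiction. Phrase the conclusion that way (or via the tangent cone being a union of two distinct four-planes rather than a linear space) and the proof is airtight.
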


In this paper we will answer the question for $m= 4$.  Note that there are
$2^{24}=16,777,216$ relations on $[m]=[4]$.  However, two relations may induce
the same Gaussian model.  For instance,
$V_\pd(\mathcal{L})=V_\pd(\mathcal{K})$ for $\mathcal{L}=\{12,13|2\}$ and
$\mathcal{K}=\{12,13,12|3,13|2\}$.  Therefore, we begin our study of
Question~\ref{q:smooth}, by finding all Gaussian CI models for a random
vector of length $m=4$.  In this work we build heavily on the work
\cite{matus:2007} that determines the CI relations that are {\em
  representable} in Gaussian random vector of length $m\le 4$; see
also \cite{simecek:prague,simecek:compstat}.

\begin{definition}
  \label{def:represent}
  A relation $\mathcal{L}$ is {\em representable} if there exists a
  covariance matrix $\Sigma\in\PD_m$ for which $\det(\Sigma_{iC,jC})=0$ if
  and only if $ij|C\in\mathcal{L}$.
\end{definition}

The remainder of this paper is structured as follows.  All Gaussian CI
models for $m=4$ random variables are found in
Section~\ref{sec:model}.  Correlation matrices and helpful methods
from computational algebra are introduced in Section~\ref{sec:algebra}
and used to answer Question~\ref{q:smooth} for $m=4$ in
Section~\ref{sec:singularity}.  The findings are discussed in
Section~\ref{sec:conclusion}.  Appendix~\ref{sec:appendix} lists all
Gaussian CI models and implications for $m=4$.



\section{Gaussian conditional independence models}
\label{sec:model}

As mentioned in the introduction, 
there is a many-to-one relationship between the
relations $\mathcal{L}$ and the sets of covariance matrices
$V_\pd(\mathcal{L})$.  In this section we explore this relationship and
determine all Gaussian CI models on four variables.


\subsection{Complete relations and representable decomposition}
\label{subsec:complete}

Given a set of covariance matrices $W\subset \PD_m$, we can define a
relation as
\[
\mathcal{L}(W) =\big\{\, ij|C\in\mathcal{R}(m) \::\: \det(\Sigma_{iC,jC})=0
\;\text{for all}\; \Sigma\in W\, \big\}.
\]
The operator $\mathcal{L}(\cdot)$ and the operator $V_\pd(\cdot)$, defined in
Section~\ref{sec:introduction}, are both inclusion-reversing.  In other words, if
two relations satisfy $\mathcal{L}\subseteq\mathcal{K}$ then
$V_\pd(\mathcal{L})\supseteq V_\pd(\mathcal{K})$, and if two sets are
ordered by inclusion as $V\subseteq W$ then
$\mathcal{L}(V)\supseteq\mathcal{L}(W)$.  For any relation $\mathcal{L}$,
it holds that $\mathcal{L}\subseteq\mathcal{L}(V_\pd(\mathcal{L}))$.

\begin{definition}
  \label{def:complete}
  A relation $\mathcal{L}$ is {\em complete} if
  $\mathcal{L}=\mathcal{L}(V_\pd(\mathcal{L}))$, that is, if for
  every couple $ij|C\not\in\mathcal{L}$ there exists a covariance matrix
  $\Sigma\in V_\pd(\mathcal{L})$ with $\det(\Sigma_{iC,jC})\not=0$.
\end{definition}

Clearly, there is a 1:1 correspondence between models and complete
relations.  The following result provides a useful decomposition into
representable pieces.

\begin{theorem}
  \label{thm:repr-decomp-variety}
  Every conditional independence model $V_\pd(\mathcal{L})$ has a
  representable decomposition, that is, it can be decomposed as
  \[
  V_\pd(\mathcal{L}) = V_\pd(\mathcal{L}_1)\cup\dots\cup
  V_\pd(\mathcal{L}_k),
  \]
  where $\mathcal{L}_1,\dots,\mathcal{L}_k$ are representable relations.
  The decomposition can be chosen minimal (i.e.,
  $\mathcal{L}_i\not\subseteq \mathcal{L}_j$ for all $i\not= j$), in which
  case the relations $\mathcal{L}_1,\dots,\mathcal{L}_k$ are unique up to
  reordering.
\end{theorem}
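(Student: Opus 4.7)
The plan is to build the decomposition one point at a time, using the fact that every positive definite matrix trivially represents some relation, and then prune.

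First I would associate to each $\Sigma\in V_\pd(\mathcal{L})$ the single‐matrix relation
\[
\mathcal{L}_\Sigma \;:=\; \mathcal{L}(\{\Sigma\}) \;=\; \{\, ij|C\in\mathcal{R}(m) \::\: \det(\Sigma_{iC,jC})=0\,\},
\]
which is representable by construction (namely, by $\Sigma$ itself). Since $\Sigma\in V_\pd(\mathcal{L})$ entails $\mathcal{L}\subseteq\mathcal{L}_\Sigma$, the inclusion-reversing property of $V_\pd(\cdot)$ gives $\Sigma\in V_\pd(\mathcal{L}_\Sigma)\subseteq V_\pd(\mathcal{L})$. This yields the tautological equality
\[
V_\pd(\mathcal{L})\;=\;\bigcup_{\Sigma\in V_\pd(\mathcal{L})} V_\pd(\mathcal{L}_\Sigma).
\]
Although the index set on the right is infinite, only finitely many distinct relations occur among the $\mathcal{L}_\Sigma$, because $\mathcal{R}(m)$ is a finite set. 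This gives a finite representable decomposition.

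Next I would pass to a minimal decomposition. Since $\mathcal{L}_i\subseteq\mathcal{L}_j$ implies $V_\pd(\mathcal{L}_j)\subseteq V_\pd(\mathcal{L}_i)$, a piece $V_\pd(\mathcal{L}_j)$ may be discarded whenever some other $\mathcal{L}_i$ is contained in $\mathcal{L}_j$. Retaining only the $\mathcal{L}_\Sigma$ that are \emph{minimal} among $\{\mathcal{L}_\tau:\tau\in V_\pd(\mathcal{L})\}$ under set inclusion yields an antichain $\mathcal{L}_1,\dots,\mathcal{L}_k$ whose union $V_\pd(\mathcal{L}_1)\cup\dots\cup V_\pd(\mathcal{L}_k)$ still equals $V_\pd(\mathcal{L})$: every $\Sigma\in V_\pd(\mathcal{L})$ lies in $V_\pd(\mathcal{L}_\tau)$ for some minimal $\mathcal{L}_\tau\subseteq\mathcal{L}_\Sigma$, which exists by finiteness. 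Minimality in the theorem's sense is then the statement $\mathcal{L}_i\not\subseteq\mathcal{L}_j$ for $i\neq j$.

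For uniqueness, suppose $\mathcal{L}_1,\dots,\mathcal{L}_k$ and $\mathcal{K}_1,\dots,\mathcal{K}_\ell$ are two minimal representable decompositions. Pick an index $i$ and choose a representing matrix $\Sigma_i\in V_\pd(\mathcal{L}_i)$ with $\mathcal{L}(\{\Sigma_i\})=\mathcal{L}_i$; this is possible because $\mathcal{L}_i$ is representable. Then $\Sigma_i\in V_\pd(\mathcal{K}_j)$ for some $j$, which forces $\mathcal{K}_j\subseteq\mathcal{L}_i$. Applying the same argument to a representing matrix $\tau_j$ of $\mathcal{K}_j$ produces an $\ell$ with $\mathcal{L}_\ell\subseteq\mathcal{K}_j\subseteq\mathcal{L}_i$. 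Minimality of the first decomposition forces $\ell=i$, and hence $\mathcal{K}_j=\mathcal{L}_i$. Interchanging the roles of the two decompositions shows that $i\mapsto j$ is a bijection.

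The main obstacle I expect is purely bookkeeping: being careful about which direction inclusions go under the inclusion-reversing operators $\mathcal{L}(\cdot)$ and $V_\pd(\cdot)$, and correctly invoking representability (Definition~\ref{def:represent}) to produce a matrix that realizes a \emph{given} relation exactly, rather than merely satisfying it. No algebraic geometry beyond the finiteness of $\mathcal{R}(m)$ is needed, so the argument is essentially combinatorial once the right construction of $\mathcal{L}_\Sigma$ is in place.
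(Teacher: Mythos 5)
Your proof is correct, and the existence half takes a genuinely different and more direct route than the paper. The paper argues by contradiction: it picks an inclusion-minimal model without a representable decomposition, assumes the relation complete, covers the model by models of strictly larger complete relations, and invokes minimality to get a contradiction. You instead construct the decomposition explicitly: every $\Sigma\in V_\pd(\mathcal{L})$ lies in $V_\pd(\mathcal{L}(\{\Sigma\}))\subseteq V_\pd(\mathcal{L})$, the relation $\mathcal{L}(\{\Sigma\})$ is representable by $\Sigma$ itself per Definition~\ref{def:represent}, and finiteness of $\mathcal{R}(m)$ makes the resulting union finite. This is shorter, and it identifies the components concretely as the inclusion-minimal elements of $\{\mathcal{L}(\{\Sigma\}):\Sigma\in V_\pd(\mathcal{L})\}$; your observation that for representable relations $V_\pd(\mathcal{K})\subseteq V_\pd(\mathcal{L}')$ is equivalent to $\mathcal{L}'\subseteq\mathcal{K}$ correctly justifies that pruning to an antichain loses nothing. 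The paper's detour through complete relations buys a smoother segue into Theorem~\ref{thm:complete-intersection-represent}, but is not needed for this statement. Your uniqueness argument is essentially the paper's, phrased through an exactly representing matrix $\Sigma_i$ rather than through the identity $V_\pd(\mathcal{L}_i)\cap V_\pd(\mathcal{K}_j)=V_\pd(\mathcal{L}_i\cup\mathcal{K}_j)$; the two are interchangeable since the paper also uses representability of $\mathcal{L}_i$ at that step. The only blemish is notational: you use $\ell$ both for the length of the second decomposition and as the index produced by the symmetric argument.
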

\begin{proof}
  Suppose not all models have a representable decomposition.  Choose
  $V_\pd(\mathcal{L})$ to be a model that is inclusion-minimal among those
  without a representable decomposition.  Enlarging the relation if
  necessary, we may assume that $\mathcal{L}$ is complete.  Since
  $\mathcal{L}$ cannot be representable, every matrix $\Sigma\in
  V_\pd(\mathcal{L})$ is in $V_\pd(\mathcal{L}\cup\{ij|C\})$ for some CI
  couple $ij|C\not\in\mathcal{L}$.  Therefore, there exist complete
  relations $\mathcal{K}_1,\dots,\mathcal{K}_{l}$, all proper supersets of
  $\mathcal{L}$, such that
  \[
  V_\pd(\mathcal{L}) = V_\pd(\mathcal{K}_1)\cup\dots\cup
  V_\pd(\mathcal{K}_l).
  \]
  The relation $\mathcal{L}$ being complete, each $V_\pd(\mathcal{K}_j)$ is
  a proper subset of $V_\pd(\mathcal{L})$.  By the inclusion-minimal choice
  of $V_\pd(\mathcal{L})$, each $V_\pd(\mathcal{K}_j)$ has a representable
  decomposition.  This, however, yields a contradiction as combining the
  decompositions of the $V_\pd(\mathcal{K}_j)$ provides a representable
  decomposition of $V_\pd(\mathcal{L})$.
  
  A representable decomposition can be chosen to be minimal by removing
  unnecessary components.  To show uniqueness, suppose that there are two
  distinct minimal representable decompositions
  \begin{align}
    \label{eq:decomp-alter1}
  V_\pd(\mathcal{L}) &= V_\pd(\mathcal{L}_1)\cup\dots\cup
  V_\pd(\mathcal{L}_k)\\
  \intertext{and}
    \label{eq:decomp-alter2}
  V_\pd(\mathcal{L}) &= V_\pd(\mathcal{K}_1)\cup\dots\cup
  V_\pd(\mathcal{K}_l).
  \end{align}
  Then, for each $i\in[k]$, we have
  \[
  V_\pd(\mathcal{L}_i) \;=\; 
  V_\pd(\mathcal{L}_i) \cap V_\pd(\mathcal{L}) \;=\; 
  \bigcup_{j=1}^l \left[\,V_\pd(\mathcal{L}_i) \cap
    V_\pd(\mathcal{K}_j)\,\right].
  \]
  Since $V_\pd(\mathcal{L}_i) \cap
  V_\pd(\mathcal{K}_j)=V_\pd(\mathcal{L}_i\cup\mathcal{K}_j)$ and
  $\mathcal{L}_i$ is representable, it follows that $\mathcal{L}_i =
  \mathcal{L}_i \cup \mathcal{K}_j$ for some $j$, which implies that
  $\mathcal{K}_j\subseteq \mathcal{L}_i$.  Applying the same argument with
  the role of the two decompositions reversed, we obtain that
  $\mathcal{L}_s\subseteq \mathcal{K}_j\subseteq \mathcal{L}_i$.  By
  minimality, $s=i$, and thus, $\mathcal{L}_i=
  \mathcal{K}_j$.  Hence, every $\mathcal{L}_i$ appears in
  (\ref{eq:decomp-alter2}).  It follows that $k\le l$.  Reversing again the
  role of the decomposition, we find that $k=l$ and the
  $\mathcal{K}_j$ are just a permutation of the
  $\mathcal{L}_i$.
\end{proof}

\begin{theorem}
  \label{thm:complete-intersection-represent}
  A relation $\mathcal{L}$ is complete if and only if it is an intersection
  of representable relations.  The representable relations can be chosen to
  yield a representable decomposition of the model $V_\pd(\mathcal{L})$.
\end{theorem}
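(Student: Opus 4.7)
The plan is to reduce the claim to Theorem~\ref{thm:repr-decomp-variety} together with two elementary observations about the operators $\mathcal{L}(\cdot)$ and $V_\pd(\cdot)$. First, $\mathcal{L}(\cdot)$ turns unions of sets of covariance matrices into intersections of relations, that is, $\mathcal{L}(W_1 \cup \dots \cup W_k) = \mathcal{L}(W_1) \cap \dots \cap \mathcal{L}(W_k)$; this is immediate from the definition, since a determinant vanishes on a union iff it vanishes on each piece. Second, every representable relation is itself complete: if $\Sigma$ is a witness to the representability of $\mathcal{K}$, then $\Sigma\in V_\pd(\mathcal{K})$, and for each couple $ij|C\not\in\mathcal{K}$ we have $\det(\Sigma_{iC,jC})\neq 0$, so $ij|C\not\in\mathcal{L}(V_\pd(\mathcal{K}))$; combining with the general inclusion $\mathcal{K}\subseteq \mathcal{L}(V_\pd(\mathcal{K}))$ gives equality.

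For the forward direction, suppose $\mathcal{L}$ is complete. Apply Theorem~\ref{thm:repr-decomp-variety} to write $V_\pd(\mathcal{L}) = V_\pd(\mathcal{L}_1)\cup\dots\cup V_\pd(\mathcal{L}_k)$ with each $\mathcal{L}_i$ representable. Applying $\mathcal{L}(\cdot)$ and invoking the two preliminary observations, I get
\[
\mathcal{L} \;=\; \mathcal{L}(V_\pd(\mathcal{L})) \;=\; \bigcap_{i=1}^k \mathcal{L}(V_\pd(\mathcal{L}_i)) \;=\; \bigcap_{i=1}^k \mathcal{L}_i,
\]
which realizes $\mathcal{L}$ as an intersection of representable relations and simultaneously exhibits the representable decomposition required by the second clause.

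For the converse, suppose $\mathcal{L}=\bigcap_{i=1}^k \mathcal{L}_i$ with each $\mathcal{L}_i$ representable. Since $\mathcal{L}\subseteq \mathcal{L}_i$, inclusion-reversal of $V_\pd(\cdot)$ gives $V_\pd(\mathcal{L}_i)\subseteq V_\pd(\mathcal{L})$, and inclusion-reversal of $\mathcal{L}(\cdot)$ then yields $\mathcal{L}(V_\pd(\mathcal{L}))\subseteq \mathcal{L}(V_\pd(\mathcal{L}_i))=\mathcal{L}_i$, where the last equality uses completeness of the representable $\mathcal{L}_i$. Intersecting over $i$ gives $\mathcal{L}(V_\pd(\mathcal{L}))\subseteq \bigcap \mathcal{L}_i=\mathcal{L}$; combined with the always-valid reverse inclusion, this proves $\mathcal{L}$ complete. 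The only real obstacle is keeping track of the direction of inclusions under the two inclusion-reversing operators; once that bookkeeping is in order, the argument is entirely formal and relies on no new algebraic input beyond Theorem~\ref{thm:repr-decomp-variety}.
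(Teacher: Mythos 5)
Your proof is correct and follows essentially the same route as the paper: both directions rest on Theorem~\ref{thm:repr-decomp-variety} and on the fact that representable relations are complete. The only cosmetic difference is that in the forward direction you obtain $\mathcal{L}=\bigcap_i\mathcal{L}_i$ in one step via the identity $\mathcal{L}(W_1\cup\dots\cup W_k)=\bigcap_i\mathcal{L}(W_i)$, whereas the paper verifies the two inclusions separately (the harder one by passing through $V_\pd\bigl(\bigcap_i\mathcal{L}_i\bigr)$); the content is identical.
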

\begin{proof}
  Suppose a relation $\mathcal{L}$ is the intersection of representable
  relations $\mathcal{L}_1,\dots,\mathcal{L}_k$.  Consider a CI couple
  $ij|C\in\mathcal{L}(V_\pd(\mathcal{L}))$, that is, $ij|C$ holds for all
  covariance matrices in $V_\pd(\mathcal{L})$.  By assumption,
  $\mathcal{L}\subseteq\mathcal{L}_i$ for all $i\in[k]$.  Hence,
  $V_\pd(\mathcal{L}_i)\subseteq V_\pd(\mathcal{L})$ and thus
  $ij|C\in\mathcal{L}(V_\pd(\mathcal{L}_i))$ for all $i\in[k]$.  But
  $\mathcal{L}(V_\pd(\mathcal{L}_i))=\mathcal{L}_i$ because the
  representable relations $\mathcal{L}_i$ are in particular complete.  It
  follows that $ij|C$ is in each relation
  $\mathcal{L}_1,\dots,\mathcal{L}_k$ and thus also in $\mathcal{L}$.
  
  Conversely, let $\mathcal{L}$ be a complete relation.  Let
  $\mathcal{L}_1,\dots,\mathcal{L}_k$ be representable relations that yield
  a representable decomposition of $V_\pd(\mathcal{L})$ as in
  Theorem~\ref{thm:repr-decomp-variety}.  Since
  $V_\pd(\mathcal{L}_i)\subseteq V_\pd(\mathcal{L})$ for each $i\in[k]$, we
  have that 
  \[
  \mathcal{L}\;=\;\mathcal{L}(V_\pd(\mathcal{L}))\;\subseteq\;
  \mathcal{L}(V_\pd(\mathcal{L}_i))\;=\;\mathcal{L}_i.
  \]
  Hence, $\mathcal{L}$ is a subset of the intersection of
  $\mathcal{L}_1,\dots,\mathcal{L}_k$.  Since we may deduce from 
  \[
  V_\pd(\mathcal{L})\;=\;\bigcup_{i=1}^k V_\pd(\mathcal{L}_i) \;\subseteq\;
  V_\pd\left(\bigcap_{i=1}^k \mathcal{L}_i\right)
  \]
  that
  \[
  \bigcap_{i=1}^k \mathcal{L}_i \;\subseteq\;
  \mathcal{L}\left(V_\pd\left(\bigcap_{i=1}^k
      \mathcal{L}_i\right)\right)\;\subseteq\;
  \mathcal{L}(V_\pd(\mathcal{L}))\;=\;\mathcal{L},
  \]
  we have shown that $\mathcal{L}$ is  the intersection of
  $\mathcal{L}_1,\dots,\mathcal{L}_k$.
\end{proof}

\begin{example}
  The following relations are derived from the marginal independence
  statements $1\ind 23$, $2\ind 13$ and $1\ind 234$, respectively:
  \begin{align*}
    \mathcal{L}_1 &= \{12,13,12|3,13|2\},\\
    \mathcal{L}_2 &= \{12,23,12|3,23|1\},\\
    \mathcal{L}_3 &=
    \{12,13,14,12|3,12|4,13|2,13|4,14|2,14|3,12|34,13|24,14|23\}.
  \end{align*}
  All three are representable.  Since $\mathcal{L}_2\cap\mathcal{L}_3$
  is equal to $\mathcal{L}=\{12,12|3\}$, the latter is a complete
  relation.  However, $\mathcal{L}_2$ and $\mathcal{L}_3$ do not yield
  a representable decomposition of $V_\pd(\mathcal{L})$ because
  \[
  V_\pd(\mathcal{L}_2) \cup V_\pd(\mathcal{L}_3) \;\subsetneq\;
  V_\pd(\mathcal{L}).
  \]
  The minimal representable decomposition of $V_\pd(\mathcal{L})$ is
  instead given by $\mathcal{L}_1$ and $\mathcal{L}_2$.
\end{example}

\begin{remark}
  The graphical modelling literature also discusses {\em strong
    completeness}; see e.g.~\cite{levitz:2001}.  A representable
  relation $\mathcal{L}$ is {\em strongly complete} if the covariance
  matrices $\Sigma\in V_\pd(\mathcal{L})$ with
  $\mathcal{L}\not=\mathcal{L}(\{\Sigma\})$ form a lower-dimensional
  subset of $V_\pd(\mathcal{L})$.  For the CI relations appearing in
  graphical modelling, the set $V_\pd(\mathcal{L})$ typically
  possesses a polynomial parametrization.  It follows that
  $V_\pd(\mathcal{L})$ is the intersection of an irreducible algebraic
  variety and the cone $\PD_m$.  Completeness then implies strong
  completeness by general results from algebraic geometry
  \cite{cox:2007}.
\end{remark}

\subsection{All models on four variables}
\label{subsec:all-models}

Call two relations $\mathcal{L}_1$ and $\mathcal{L}_2$ {\em
  equivalent}, if there exists a permutation of the indices in the
ground set $[m]$ that turns $\mathcal{L}_1$ into $\mathcal{L}_2$.  In
\cite{matus:2007} it is shown that for $m=4$, there are 53 equivalence
classes of representable relations.
In this section, we find all Gaussian conditional independence models
for $m=4$ random variables by constructing all complete relations.
The work in this section will lead to the proof of the following
result:

\begin{theorem}
  \label{thm:all-complete}
  There are 101 equivalence classes of complete relations on the set
  $[m]=[4]$.
\end{theorem}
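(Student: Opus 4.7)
The plan is to combine Theorem~\ref{thm:complete-intersection-represent} with the classification of representable relations from \cite{matus:2007}, which yields $53$ $S_4$-equivalence classes for $m=4$, and then enumerate all intersections of representable relations modulo the $S_4$-action on the index set $[4]$.

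Concretely, I would first expand the $53$ orbit representatives into the full list $\mathcal{R}$ of representable relations on $[4]$ by applying every permutation in $S_4$. Then, starting from $\mathcal{C} = \mathcal{R}$, I would iteratively close under pairwise intersection: add $\mathcal{L}_1 \cap \mathcal{L}_2$ for every $\mathcal{L}_1 \in \mathcal{C}$ and $\mathcal{L}_2 \in \mathcal{R}$, and iterate until no new relations appear. By Theorem~\ref{thm:complete-intersection-represent}, the resulting set $\mathcal{C}$ is precisely the family of all complete relations on $[4]$. Finally, I would canonicalize each element of $\mathcal{C}$ under the $S_4$-action (for instance, as the lexicographically smallest element in its orbit) and count the distinct canonical forms, expecting the total to be $101$.

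Theorem~\ref{thm:repr-decomp-variety} provides a structural refinement and sanity check: each complete relation has a unique minimal representable decomposition into an antichain of representable relations, so the $101$ complete classes should break down as the $53$ representable classes (corresponding to trivial length-one antichains) plus $48$ strictly non-representable complete relations arising from antichains of length at least two. Restricting the search to such antichains further prunes the enumeration and gives a convenient indexing of the $48$ new classes.

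The main obstacle is purely computational. The set $\mathcal{R}$ has at most $53 \cdot |S_4| = 1272$ elements, so naive enumeration over all of its subsets is infeasible; however, the iterative closure under intersection is tractable because the expanding set $\mathcal{C}$ stabilizes quickly once $S_4$-symmetry is accounted for. The delicate part is orbit bookkeeping: one must implement the $S_4$-action reliably and detect when two intersections produce $S_4$-equivalent relations, to avoid either over- or under-counting. Thus the proof is essentially a computer-assisted enumeration, with Theorems~\ref{thm:repr-decomp-variety} and~\ref{thm:complete-intersection-represent} guaranteeing that the intersection procedure captures every complete relation and introduces no spurious ones.
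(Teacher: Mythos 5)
Your argument is sound and rests on the same key fact as the paper, namely Theorem~\ref{thm:complete-intersection-represent} (complete $=$ intersection of representable relations) together with the Mat\'u\v{s} classification into $53$ classes ($629$ labelled representable relations). However, your enumeration strategy is genuinely different from the paper's. You generate the complete relations \emph{bottom-up}, by closing the set of $629$ representable relations under pairwise intersection until it stabilizes; this is logically airtight (any finite intersection is reachable by iterated pairwise intersections, the ambient set of relations is finite, and the theorem guarantees both that every element of the closure is complete and that every complete relation appears) and is arguably the most direct route to the count of $101$. The paper instead works \emph{top-down}: it first enumerates a candidate superset, the $157$ equivalence classes of semigaussoids, via a graph-based search using separation graphoids (Lemma~\ref{thm:graph}) and duality (Lemma~\ref{thm:dual3}), prunes $10$ candidates with the implications of Lemma~\ref{thm:moreci}, and then, for each of the remaining $94$ non-representable semigaussoids $\mathcal{L}$, intersects all representable relations containing $\mathcal{L}$ and checks whether the intersection equals $\mathcal{L}$; exactly $48$ pass, giving $53+48=101$. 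What the paper's detour through semigaussoids buys is the byproduct data used elsewhere: the $46$ non-complete semigaussoids are precisely what generate the new CI implications (\ref{eq:cistart})--(\ref{eq:ciend}) in the appendix, and the candidate-by-candidate test directly certifies which axioms a relation fails. Your closure approach would still have to recover the minimal representable decompositions of Table~\ref{tab:com} (e.g.\ by recording, for each complete relation, the maximal representable relations containing it), but as a proof of the count of $101$ it is complete, modulo the usual caveat that the orbit bookkeeping under $S_4$ must be implemented correctly --- the same caveat that applies to the paper's own computation.
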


In the introduction, we stated the equality
$V_\pd(\mathcal{L})=V_\pd(\mathcal{K})$ for the relations
$\mathcal{L}=\{12,13|2\}$ and $\mathcal{K}=\{12,13,12|3,13|2\}$ as an
example of two relations inducing the same model.  Alternatively, we
may view this as $\{12,13|2\}$ implying $\{12|3,13\}$.

\begin{definition}
  \label{def:CI-impl}
  A (Gaussian) conditional independence implication is an ordered pair of
  disjoint CI relations $(\mathcal{L}_1,\mathcal{L}_2)$ such that
  $V_\pd(\mathcal{L}_1)=V_\pd(\mathcal{L}_1\cup\mathcal{L}_2)$.  We denote
  the implication as $\mathcal{L}_1 \Rightarrow \mathcal{L}_2$ and say that
  a relation $\mathcal{L}$ satisfies $\mathcal{L}_1 \Rightarrow
  \mathcal{L}_2$, if $\mathcal{L}_1 \subseteq \mathcal{L}$ implies that
  $\mathcal{L}_2 \subseteq \mathcal{L}$.
\end{definition}

\begin{example}
  Let $i,j,k \in [m]$ be distinct indices and $C \subset [m] \setminus
  ijk$. Then the following are Gaussian CI implications:
  \begin{eqnarray}
    \{ij|C,ik|C\} & \Longrightarrow & \{ij|kC,ik|jC\} \label{c3} \\
    \{ij|C,ik|jC\} & \Longrightarrow & \{ik|C,ij|kC\}  \label{c4} \\
    \{ij|kC,ik|jC\} & \Longrightarrow & \{ij|C,ik|C\}.  \label{c5}
  \end{eqnarray}
  Implication (\ref{c3}) follows from the last assertion in
  Proposition~\ref{prop:cic} and
  an implication known as {\em weak union} that holds for all probability
  distributions.  Implication (\ref{c4}) is referred to as {\em
    contraction} and also holds for all probability distributions.  The
  last implication, (\ref{c5}), is known as {\em intersection} and holds
  for many but not all non-Gaussian distributions.  See for instance
  \cite[\S3.1]{drton:2009} for more background.
\end{example}


We now describe how to construct all complete relations by adapting the
approach taken in the construction of all representable relations in
\cite{matus:2007}.  A key concept is the following notion of duality.


\begin{definition}
  \label{def:dual}
  The {\it dual} of a couple $ij|C \in \mathcal{R}(m)$ is the couple
  $ij|\bar C$ where $\bar C = [m]\setminus ijC$.  The {\it dual} of a relation
  $\mathcal{L}$ on $[m]$ is the relation
  \[
  \mathcal{L}^d=\{ ij|\bar C\;:\; ij|C \in \mathcal{L}\}
  \]
  made up of the dual couples of the elements of $\mathcal{L}$.
\end{definition}

\begin{lemma}
  \label{lem:dual}
  For a positive definite matrix $\Sigma$ and two relations $\mathcal{L}$ and
  $\mathcal{K}$:
  \begin{enumerate}
  \item[(i)] $\mathcal{L}(\{\Sigma\})^d=\mathcal{L}(\{\Sigma^{-1}\})$;
  \item[(ii)] $\mathcal{L} \Rightarrow \mathcal{K}$ if and only if
    $\mathcal{L}^d \Rightarrow \mathcal{K}^d$;
  \item[(iii)] $\mathcal{L}$ is complete if and only if $\mathcal{L}^d$
    has this property.
  \end{enumerate}
\end{lemma}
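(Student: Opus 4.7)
The plan is to derive all three claims from one classical identity: for any $\Sigma\in\PD_m$ with inverse $K=\Sigma^{-1}$, and any CI couple $ij|C$ with $\bar C=[m]\setminus ijC$,
\[
\det \Sigma_{iC,jC} = 0 \;\Longleftrightarrow\; \det K_{i\bar C,j\bar C} = 0.
\]
This is the Jacobi complementary-minor identity, which in this setting reads $\det K_{iC,jC} = \pm\det \Sigma_{i\bar C,j\bar C}/\det\Sigma$; the complementary indices get swapped because $[m]\setminus iC = j\bar C$ and $[m]\setminus jC=i\bar C$. Combined with Proposition~\ref{prop:cic}, this gives (i) directly: $ij|C\in\mathcal{L}(\{\Sigma\})^d$ means $ij|\bar C\in\mathcal{L}(\{\Sigma\})$, i.e.\ $\det\Sigma_{i\bar C,j\bar C}=0$, which by the identity (with $C$ and $\bar C$ swapped) is exactly $\det K_{iC,jC}=0$, i.e.\ $ij|C\in\mathcal{L}(\{\Sigma^{-1}\})$.

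For (ii), I would apply (i) pointwise. Since $\Sigma\in V_\pd(\mathcal{L})$ is equivalent to $\mathcal{L}\subseteq\mathcal{L}(\{\Sigma\})$, and dualization preserves inclusion and satisfies $\mathcal{L}^{dd}=\mathcal{L}$, this is the same as $\mathcal{L}^d\subseteq\mathcal{L}(\{\Sigma\})^d=\mathcal{L}(\{\Sigma^{-1}\})$, i.e.\ $\Sigma^{-1}\in V_\pd(\mathcal{L}^d)$. Hence inversion restricts to a bijection $V_\pd(\mathcal{L})\leftrightarrow V_\pd(\mathcal{L}^d)$, and since $(\mathcal{L}\cup\mathcal{K})^d=\mathcal{L}^d\cup\mathcal{K}^d$, the same bijection identifies $V_\pd(\mathcal{L}\cup\mathcal{K})$ with $V_\pd(\mathcal{L}^d\cup\mathcal{K}^d)$. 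The defining equality $V_\pd(\mathcal{L})=V_\pd(\mathcal{L}\cup\mathcal{K})$ of the implication $\mathcal{L}\Rightarrow\mathcal{K}$ therefore holds iff $V_\pd(\mathcal{L}^d)=V_\pd(\mathcal{L}^d\cup\mathcal{K}^d)$, i.e.\ iff $\mathcal{L}^d\Rightarrow\mathcal{K}^d$.

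For (iii), I would upgrade (i) to the level of sets: by the bijection of (ii), $\mathcal{L}(V_\pd(\mathcal{L}^d))=\mathcal{L}(V_\pd(\mathcal{L}))^d$, because the condition ``$\det K_{iD,jD}=0$ for every $K\in V_\pd(\mathcal{L}^d)$'' translates, via $K=\Sigma^{-1}$ and the Jacobi identity, into ``$\det\Sigma_{i\bar D,j\bar D}=0$ for every $\Sigma\in V_\pd(\mathcal{L})$'', i.e.\ $ij|\bar D\in\mathcal{L}(V_\pd(\mathcal{L}))$. Completeness $\mathcal{L}=\mathcal{L}(V_\pd(\mathcal{L}))$ thus dualizes, using $\mathcal{L}^{dd}=\mathcal{L}$, into $\mathcal{L}^d=\mathcal{L}(V_\pd(\mathcal{L}^d))$, which is completeness of $\mathcal{L}^d$.

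The only substantive ingredient is Jacobi's identity, so the proof amounts to bookkeeping; the one place requiring care is the index swap in the complementary minors, which is exactly what makes the dual operation $C\mapsto\bar C$ match up, and the observation that $d$ respects unions, which is what lets the implication in (ii) transport cleanly.
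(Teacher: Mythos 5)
Your proof is correct and follows essentially the same route as the paper: the key ingredient is the complementary-minor (Jacobi) identity relating $\det\Sigma_{iC,jC}$ to $\det(\Sigma^{-1})_{i\bar C,j\bar C}$, which the paper cites to establish (i) and from which it asserts (ii) and (iii) ``follow readily.'' Your write-up simply makes that bookkeeping explicit, and the index swap and the union-compatibility of $(\cdot)^d$ are handled correctly.
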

\begin{proof}
  (ii) and (iii) follow readily from (i), which holds since a
  subdeterminant in an invertible matrix is zero if and only if the
  complementary subdeterminant in the matrix inverse is zero; see for
  instance \cite[Lemma 1]{matus:2007}.
\end{proof} 

Any complete relation is in particular a {\em semigaussoid}, where a
semigaussoid is defined to be a relation $\mathcal{L} \subseteq
\mathcal{R}(m)$ that satisfies the CI implications (\ref{c3}), (\ref{c4}),
and (\ref{c5}) for all distinct $i,j,k \in [m]$ and $C \subset [m]
\setminus ijk$.  The {\em separation graphoid} associated with a simple
undirected graph $G$ with the vertex set $[m]$ is the relation
$$
\rel{G} =\{\, ij|C \in \mathcal{R}(m)\::\: \hbox{$C$ separates $i$ and $j$
  in $G$}\, \}.
$$
It is a semigaussoid since it is {ascending} and {transitive},
that is,
\begin{eqnarray*}
ij|C \in \rel{G} & \Longrightarrow & ij|kC \in \rel{G} \\
ij|C \in \rel{G} & \Longrightarrow & ik|C \in \rel{G} \;\hbox{ or }\; jk|C \in \rel{G}
\end{eqnarray*}
for any three distinct indices $i,j,k$ and $C \subset [m] \setminus ijk$.
The next two lemmas can be shown by slightly modifying the proofs of Lemma
2 and Lemma 3 in \cite{matus:2007}.

\begin{lemma}
  \label{thm:dual3}
  The duals of semigaussoids are semigaussoids.
\end{lemma}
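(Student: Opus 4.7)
The plan is to dualize each of the three semigaussoid axioms and check that each, when imposed on $\mathcal{L}^d$, is equivalent under the involution $\mathcal{L}\mapsto\mathcal{L}^d$ to one of the axioms imposed on $\mathcal{L}$.

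First I would fix distinct indices $i,j,k\in[m]$, a set $C\subseteq[m]\setminus ijk$, and let $D=[m]\setminus ijkC$, so that $[m]$ is the disjoint union of $\{i,j,k\}$, $C$, and $D$. A short calculation of complements then produces the dictionary
\[
(ij|C)^d=ij|kD,\quad (ik|C)^d=ik|jD,\quad (ij|kC)^d=ij|D,\quad (ik|jC)^d=ik|D.
\]
Because $(\mathcal{L}^d)^d=\mathcal{L}$, membership of a couple $\alpha$ in $\mathcal{L}^d$ is equivalent to membership of $\alpha^d$ in $\mathcal{L}$, so each axiom imposed on $\mathcal{L}^d$ translates directly, couple by couple, into an implication about $\mathcal{L}$.

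Applying this dictionary to the axioms in turn, I would verify that (\ref{c3}) on $\mathcal{L}^d$ becomes (\ref{c5}) on $\mathcal{L}$ with $D$ playing the role of $C$; (\ref{c5}) on $\mathcal{L}^d$ becomes (\ref{c3}) on $\mathcal{L}$ in the same way; and (\ref{c4}) on $\mathcal{L}^d$ becomes (\ref{c4}) on $\mathcal{L}$ after a relabeling that swaps $j$ and $k$. The swap is legitimate because (\ref{c4}) is assumed for every ordered triple of distinct indices.

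The main point---really the only content of the proof---is the observation that duality interchanges the weak-union axiom (\ref{c3}) and the intersection axiom (\ref{c5}) while preserving the contraction axiom (\ref{c4}); once this is recognized, the verification is routine bookkeeping and I anticipate no obstacle beyond tracking the conditioning sets carefully.
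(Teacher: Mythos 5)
Your proof is correct and is essentially the argument the paper delegates to Mat\'u\v{s}: the dictionary of dual couples is right, and the key observation that duality interchanges (\ref{c3}) and (\ref{c5}) while sending (\ref{c4}) to itself with $j$ and $k$ swapped (legitimate since the axiom is assumed for all ordered triples) is exactly what makes the verification go through.
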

\begin{lemma}
  \label{thm:graph}
  For a relation $\mathcal{L} \subset \mathcal{R}(m)$, define $G$ to be the
  graph on $[m]$ with $i$ and $j$ adjacent if and only if $\mathcal{L}$
  does not contain the couple $ij|[m] \setminus ij$.  If $\mathcal{L}$ is a
  semigaussoid then $\rel{G}\subseteq\mathcal{L}$.
\end{lemma}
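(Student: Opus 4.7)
The plan is to prove $\rel{G}\subseteq\mathcal{L}$ by induction on the size of the ``unused'' index set $n = m - 2 - |C|$, i.e., the number of indices in $[m]\setminus ijC$. The base case is $n=0$, meaning $C=[m]\setminus ij$. In that case, $ij|C\in\rel{G}$ forces $i$ and $j$ to be non-adjacent in $G$, which is exactly the condition that $ij|[m]\setminus ij\in\mathcal{L}$ by the definition of $G$. So the base case is tautological.

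For the inductive step, fix $ij|C\in\rel{G}$ with $n\geq 1$ and pick any $k\in[m]\setminus ijC$. Since $C$ separates $i$ from $j$ in $G$, the vertices $i$ and $j$ lie in different connected components of $G\setminus C$. The key graph-theoretic observation is that I can route the problem through $k$: without loss of generality $k$ is not in the same component as $i$ (otherwise swap the roles of $i$ and $j$, and if $k$ is in neither component either choice works). Then $C$ already separates $i$ from $k$ in $G$, and augmenting any separator with extra vertices keeps it a separator, so both $kC$ separates $i$ from $j$ and $jC$ separates $i$ from $k$. In other words, $ij|kC\in\rel{G}$ and $ik|jC\in\rel{G}$.

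Both of these couples have conditioning set of size $|C|+1$, so their ``unused'' index set has size $n-1$. By the inductive hypothesis, $ij|kC\in\mathcal{L}$ and $ik|jC\in\mathcal{L}$. Now I apply the intersection axiom (\ref{c5}) of the semigaussoid $\mathcal{L}$ to conclude $\{ij|C,ik|C\}\subseteq\mathcal{L}$, which in particular gives $ij|C\in\mathcal{L}$, completing the induction.

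The only subtle point is the ``without loss of generality'' step: one must verify that at least one of $i$ or $j$ is graph-separated from $k$ by $C$ alone, so that induction can be invoked on $ik|jC$ (or symmetrically $jk|iC$). This is immediate from the assumption that $C$ disconnects $i$ and $j$ in $G$, since $k$ cannot simultaneously lie in both of their $G\setminus C$-components. Apart from this, the argument is purely mechanical application of intersection, and the whole proof reduces to the observation that the graphoid axiom (\ref{c5}) lets us strip one element off the conditioning set at a time, exactly dualizing the graph-separation fact that removing a vertex from a separator can be undone by adding it back in on both sides.
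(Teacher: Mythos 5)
Your proof is correct, and it is essentially the same argument the paper relies on: the paper gives no proof of its own but defers to Lemma 3 of the cited reference, whose proof is exactly this descending induction on the conditioning set --- using that graph separation is preserved under enlarging the separator to get $ij|kC$ and $ik|jC$ into $\rel{G}$, and then stripping $k$ off via the intersection axiom (\ref{c5}). The base case, the handling of which component $k$ falls into, and the applicability of the inductive hypothesis to the pair $ik$ are all treated correctly, so there is no gap.
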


Call $ij|C$ a $t$-couple if the cardinality of $C$ is $t$.
In order to find all semigaussoids it suffices, by Lemma
\ref{thm:dual3}, to consider only relations with more 2-couples than
0-couples.  There are 11 unlabelled undirected graphs on 4
nodes.
In light of Lemma
\ref{thm:graph}, we may obtain all semigaussoids by using the
following search strategy (based on an analogous strategy in
\cite{matus:2007}):

\begin{enumerate}
\item[{\em Step 1.}]  Starting from each of the 11 separation
  graphoids, add all the possible 0-couples and 1-couples while keeping the
  number of 0-couples smaller than the number of 2-couples.  
\item[{\em Step 2.}]  For each relation obtained in this way check whether
  it is a semigaussoid, and whether it is equivalent to a previously
  discovered semigaussoid.
\item[{\em Step 3.}]  Find the duals of the semigaussoids discovered in Steps 1
  and 2.  Check which new semigaussoids are equivalent to earlier found
  semigaussoids.
\end{enumerate}


Steps 1 and 2 produce 109 semigaussoids.
Figure~\ref{fig:semigaussoids} shows how many of these 109
semigaussoids are associated with each of the separation graphoids.
The saturated relation $\mathcal{R}(m)$, given by the empty graph, is
omitted from the figure.  In step 3 of our search we obtain an
additional 48 semigaussoids.  Hence, there are $109+48=157$
equivalence classes of semigaussoids.


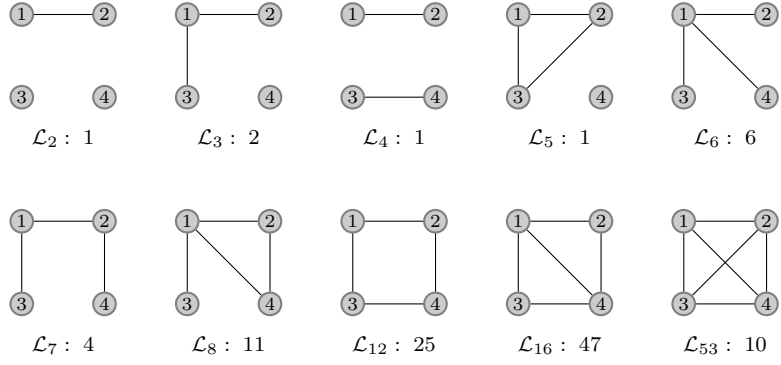
\begin{figure}[t]
  \centering 
  \vspace{0.3cm}
\begin{tikzpicture}[scale=1.1]

\node (1) at ( 0,3.5) [usual] {\scriptsize 1};
\node (2) at ( 1,3.5) [usual] {\scriptsize 2};
\node (3) at ( 0,2.5) [usual] {\scriptsize 3};
\node (4) at ( 1,2.5) [usual] {\scriptsize 4};
\draw [-] (1.east) -- (2.west);

\node (5) at ( 2,3.5) [usual] {\scriptsize 1};
\node (6) at ( 3,3.5) [usual] {\scriptsize 2};
\node (7) at ( 2,2.5) [usual] {\scriptsize 3};
\node (8) at ( 3,2.5) [usual] {\scriptsize 4};
\draw [-] (5.east) -- (6.west);
\draw [-] (5.south) -- (7.north);

\node (9)  at ( 4,3.5) [usual] {\scriptsize 1};
\node (10) at ( 5,3.5) [usual] {\scriptsize 2};
\node (11) at ( 4,2.5) [usual] {\scriptsize 3};
\node (12) at ( 5,2.5) [usual] {\scriptsize 4};
\draw [-] (9.east) -- (10.west);
\draw [-] (11.east) -- (12.west);

\node (13) at ( 6,3.5) [usual] {\scriptsize 1};
\node (14) at ( 7,3.5) [usual] {\scriptsize 2};
\node (15) at ( 6,2.5) [usual] {\scriptsize 3}
    edge [-] (13)
    edge [-] (14);
\node (16) at ( 7,2.5) [usual] {\scriptsize 4};
\draw [-] (13.east) -- (14.west);

\node (18) at ( 9,3.5) [usual] {\scriptsize 2};
\node (19) at ( 8,2.5) [usual] {\scriptsize 3};
\node (20) at ( 9,2.5) [usual] {\scriptsize 4};
\node (17) at ( 8,3.5) [usual] {\scriptsize 1}
    edge [-] (18)
    edge [-] (19)
    edge [-] (20);

\node  at (0.5,2)  {\footnotesize $\mathcal{L}_2:\; 1$};
\node  at (2.5,2)  {\footnotesize $\mathcal{L}_3:\; 2$};
\node  at (4.5,2)  {\footnotesize $\mathcal{L}_4:\; 1$};
\node  at (6.5,2)  {\footnotesize $\mathcal{L}_5:\; 1$};
\node  at (8.5,2)  {\footnotesize $\mathcal{L}_6:\; 6$};


\node (23) at ( 0,0) [usual] {\scriptsize 3};
\node (24) at ( 1,0) [usual] {\scriptsize 4};
\node (22) at ( 1,1) [usual] {\scriptsize 2}
    edge [-] (24);
\node (21) at ( 0,1) [usual] {\scriptsize 1}
    edge [-] (22)
    edge [-] (23);

\node (25) at ( 2,1) [usual] {\scriptsize 1};
\node (26) at ( 3,1) [usual] {\scriptsize 2}
    edge [-] (25);
\node (27) at ( 2,0) [usual] {\scriptsize 3}
    edge [-] (25);
\node (28) at ( 3,0) [usual] {\scriptsize 4}
    edge [-] (25)
    edge [-] (26);

\node (29) at ( 4,1) [usual] {\scriptsize 1};
\node (30) at ( 5,1) [usual] {\scriptsize 2}
    edge [-] (29);
\node (31) at ( 4,0) [usual] {\scriptsize 3}
    edge [-] (29);
\node (32) at ( 5,0) [usual] {\scriptsize 4}
    edge [-] (30)
    edge [-] (31);

\node (33) at ( 6,1) [usual] {\scriptsize 1};
\node (34) at ( 7,1) [usual] {\scriptsize 2}
    edge [-] (33);
\node (35) at ( 6,0) [usual] {\scriptsize 3}
    edge [-] (33);
\node (36) at ( 7,0) [usual] {\scriptsize 4}
    edge [-] (33)
    edge [-] (34)
    edge [-] (35);

\node (37) at ( 8,1) [usual] {\scriptsize 1};
\node (38) at ( 9,1) [usual] {\scriptsize 2}
    edge [-] (37);
\node (39) at ( 8,0) [usual] {\scriptsize 3}
    edge [-] (37)
    edge [-] (38);
\node (40) at ( 9,0) [usual] {\scriptsize 4}
    edge [-] (37)
    edge [-] (38)
    edge [-] (39);

\node  at (0.5,-0.5)  {\footnotesize $\mathcal{L}_7:\; 4$};
\node  at (2.5,-0.5)  {\footnotesize $\mathcal{L}_8:\; 11$};
\node  at (4.5,-0.5)  {\footnotesize $\mathcal{L}_{12}:\; 25$};
\node  at (6.5,-0.5)  {\footnotesize $\mathcal{L}_{16}:\; 47$};
\node  at (8.5,-0.5)  {\footnotesize $\mathcal{L}_{53}:\; 10$};

\end{tikzpicture}
  \caption{Counts of semigaussoids on the 4-element set by
    associated separation graphoid.  The graphoids $\mathcal{L}_i$ are
    labelled in reference to Table~\ref{tab:rep} in
    Appendix~\ref{sec:appendix}.}
  \label{fig:semigaussoids}
\end{figure}

The search for semigaussoids greatly reduces the number of relations.
Among the 157 semigaussoids found above are the 53 representable
relations determined in \cite{matus:2007}, but not all the remaining
104 semigaussoids are complete.  For instance, 10 semigaussoids fail
to satisfy the following CI implications:

\begin{lemma}
  \label{thm:moreci}
  Any complete relation on $[m]$ satisfies
  \begin{eqnarray}
    \{  ij|C ,  kl|C ,  ik|jlC ,  jl|ikC  \}   & \Longrightarrow &  \{ik|C\}
    \label{eq:moreci1} \\ 
    \{  ij|C ,  kl|iC ,  kl|jC ,  ij|klC  \}   & \Longrightarrow &  \{kl|C\}
    \label{eq:moreci2} \\ 
    \{  ij|C ,  jl|kC ,  kl|iC ,  ik|jlC  \}   & \Longrightarrow &  \{ik|C\}
    \label{eq:moreci3} \\ 
    \{          ij|kC ,  ik|lC ,  il|jC   \}   & \Longrightarrow &  \{ij|C\}
    \label{eq:moreci4} \\ 
    \{  ij|kC ,  jk|lC ,  kl|iC ,  il|jC  \}   & \Longrightarrow &  \{ij|C\}
    \label{eq:moreci5}    
  \end{eqnarray}
  for all distinct indices $i,j,k,l$ and $C \subset [m] \setminus ijkl$.
\end{lemma}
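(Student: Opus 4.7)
The plan is to establish each implication at the level of positive definite matrices and then invoke completeness. By Definition~\ref{def:CI-impl} and Proposition~\ref{prop:cic}, it suffices to show that for every $\Sigma\in\PD_m$ satisfying the determinantal equations $\det(\Sigma_{iD,jD})=0$ indexed by the couples on the left, the determinantal equation on the right holds. A first reduction eliminates the ambient conditioning set $C$: passing from $\Sigma$ to the Schur complement $M$ of $\Sigma_{CC}$ in $\Sigma_{([m]\setminus C),([m]\setminus C)}$ (which is again positive definite) converts each hypothesis into the analogous equation on $M$ with $C$ deleted from every conditioning set, since $\det(\Sigma_{iD,jD})=\det(\Sigma_{CC})\det(M_{i(D\setminus C),j(D\setminus C)})$. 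So we may assume $C=\emptyset$ and work with a $4\times 4$ positive definite matrix whose entries I continue to denote $\sigma_{ab}$.

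With this reduction, each CI couple becomes a polynomial equation in the $\sigma_{ab}$: $ij$ reads $\sigma_{ij}=0$; $ij|k$ reads $\sigma_{ij}\sigma_{kk}=\sigma_{ik}\sigma_{jk}$; and upon expanding the Schur complement of $M_{kl,kl}$, the condition $ij|kl$ becomes the cubic identity $\sigma_{ij}(\sigma_{kk}\sigma_{ll}-\sigma_{kl}^{2})=\sigma_{ik}\sigma_{jk}\sigma_{ll}+\sigma_{il}\sigma_{jl}\sigma_{kk}-\sigma_{kl}(\sigma_{ik}\sigma_{jl}+\sigma_{il}\sigma_{jk})$. For (\ref{eq:moreci4}) I would multiply the three hypothesis equations; assuming the target entry $\sigma_{ij}$ is nonzero propagates nonvanishing through $\sigma_{ik}$ and $\sigma_{il}$, and cancelling the common factor $\sigma_{ij}\sigma_{ik}\sigma_{il}$ leaves $\sigma_{jj}\sigma_{kk}\sigma_{ll}=\sigma_{jk}\sigma_{jl}\sigma_{kl}$, i.e.\ $r_{jk}r_{jl}r_{kl}=1$ in the correlations. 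Since each correlation has absolute value at most one, this forces $|r_{jk}|=|r_{jl}|=|r_{kl}|=1$, making $M_{\{j,k,l\},\{j,k,l\}}$ singular and contradicting strict positive definiteness. The same multiplication-and-cancellation trick applied to (\ref{eq:moreci5}) gives $\sigma_{ii}\sigma_{jj}\sigma_{kk}\sigma_{ll}=\sigma_{ik}^{2}\sigma_{jl}^{2}$, i.e.\ $r_{ik}^{2}r_{jl}^{2}=1$, incompatible with positive definiteness of the two $2\times 2$ principal minors on $\{i,k\}$ and $\{j,l\}$.

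For (\ref{eq:moreci1})--(\ref{eq:moreci3}) the same philosophy applies, but the presence of $2$-couples in the hypotheses turns the algebra into a cubic substitution problem. Assuming the target entry $t$ is nonzero, I would substitute the simpler $0$- and $1$-couple identities into the cubic expansion of the $2$-couple identities; the resulting relation can, after clearing denominators by powers of $t$, be rewritten so that one side is a sum of products of $2\times 2$ principal-minor determinants (each strictly positive by positive definiteness) against squares of $\sigma_{ab}$, and the other side vanishes. This forces the assumption $t\neq 0$ to fail. The main obstacle is the algebraic bookkeeping in these three cases: the cubic expansion produces many cross-terms that must be organized so that every surviving summand carries a definite sign coming from an explicit principal $2\times 2$ minor. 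I would therefore present (\ref{eq:moreci4}) and (\ref{eq:moreci5}) first as warm-ups illustrating the correlation-product contradiction, and then carry out the longer substitutions for (\ref{eq:moreci1}), (\ref{eq:moreci2}), and (\ref{eq:moreci3}) in turn, in each case singling out the specific principal submatrix whose strict positivity supplies the contradiction.
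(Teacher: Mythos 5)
Your reduction to $C=\emptyset$ via the Schur complement is exactly the paper's first step, and your arguments for (\ref{eq:moreci4}) and (\ref{eq:moreci5}) are complete and correct: multiplying the hypothesis equations, propagating nonvanishing along the chain, and cancelling to reach $r_{jk}r_{jl}r_{kl}=1$ (resp.\ $r_{ik}^2r_{jl}^2=1$) contradicts $|r_{ab}|<1$. This is essentially the hand argument the paper gives in Example~\ref{ex:seth} for the cyclic generalization of (\ref{eq:moreci4}); the paper's official proof instead computes primary decompositions of $I_\mathcal{L}$ in {\tt Singular} and discards the components whose varieties miss $\PD_{4,1}$, so for these two implications your route is a legitimate, more elementary alternative.

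For (\ref{eq:moreci1})--(\ref{eq:moreci3}), however, you have a plan rather than a proof, and the plan's key claim is unverified. You assert that after substituting the $0$- and $1$-couple equations into the cubic $2$-couple equations and clearing denominators, the result organizes into a sum of $2\times 2$ principal minors multiplied by squares. You never exhibit these identities, and the certificates the paper actually finds suggest the stated form is not attainable: for (\ref{eq:moreci1}) the contradiction is that $\det(R_{123,123})=-(r_{14}\pm r_{23})^2\le 0$ on the offending components, i.e.\ positivity of a $3\times 3$ principal minor is what is violated, not a combination of $2\times 2$ minors; for (\ref{eq:moreci2}) one needs the specific combination $r_{13}^2(r_{24}^2-1)+r_{24}^2(r_{23}^2-1)$ lying in the relevant ideal, and only then does $r_{34}=r_{23}r_{24}=0$ follow; for (\ref{eq:moreci3}) the certificate is $(1-r_{14}^2)+r_{14}^2r_{23}^2(1-r_{13}^2)$. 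Finding such sign-definite polynomial combinations is the entire content of these three cases (it is why the paper resorts to primary decomposition), so until you produce them explicitly — or at least demonstrate the mechanism on one of the three — the proof is incomplete for (\ref{eq:moreci1}), (\ref{eq:moreci2}) and (\ref{eq:moreci3}).
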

\begin{proof}
  These implications are proved in \cite[Lemma 10]{matus:2007}.  In
  Section~\ref{subsec:primdec}, we provide an alternative computer-aided
  proof.
\end{proof}

\begin{proof}[Proof of Theorem~\ref{thm:all-complete}]
  There are 629 representable relations on $[m]=[4]$, when treating
  equivalent but unequal relations as different.  For each relation
  $\mathcal{L}$ among the remaining 94 non-representable semigaussoids
  find all of the 629 representable relations that contain it.  By
  Theorem~\ref{thm:complete-intersection-represent}, $\mathcal{L}$ is
  complete if and only if it is equal to the intersection of these
  representable relations.  We obtain 48 complete relations in
  addition to the representable ones.  This yields the claimed 101
  Gaussian CI models (counting up to equivalence).
\end{proof}

All complete relations on $[m]=[4]$ and their representable
decompositions are listed in the appendix.  One reason for complete
relations to be non-representable is a property known as {\em weak
  transitivity}: For any  matrix $\Sigma\in\PD_m$ it holds
that
\begin{multline}
  \label{eq:weaktran}
  \{ij|C,ij|kC\}\subseteq\mathcal{L}(\{\Sigma\}) \quad\Longrightarrow \\
  \{ik|C, ik|jC\}\subseteq\mathcal{L}(\{\Sigma\}) \;\hbox{ or }\; \{jk|C,
  jk|iC\}\subseteq\mathcal{L}(\{\Sigma\});
\end{multline} 
see for instance~\cite[Ex.~3.1.5]{drton:2009}.  By (\ref{eq:weaktran}), a
representable relation $\mathcal{L}$ satisfies
\begin{equation}
  \label{eq:weaktran-rel}
  \{ij|C,ij|kC\}\subseteq\mathcal{L} \quad\Longrightarrow\quad \{ik|C,
  ik|jC\}\subseteq\mathcal{L} 
  \;\hbox{ or }\; \{jk|C, jk|iC\}\subseteq\mathcal{L}. 
\end{equation} 
Due to the disjunctive conclusion (\ref{eq:weaktran-rel}) is not a CI
implication according to our Definition~\ref{def:CI-impl}.  
The following theorem summarizes results about representable relations
established in \cite{matus:2007}.  


\begin{theorem}
  \label{thm:matus}
  A relation on $[m]=[4]$ is representable if and only if it is a
  semigaussoid that satisfies implications
  (\ref{eq:moreci1})-(\ref{eq:moreci5}) and weak transitivity
  (\ref{eq:weaktran-rel}).
\end{theorem}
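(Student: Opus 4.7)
The plan is to prove both directions of the equivalence, treating the necessity half as an immediate consequence of already recorded facts and the sufficiency half as a finite enumeration organized around the 157 equivalence classes of semigaussoids tabulated in the preceding section.

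For necessity, suppose $\mathcal{L}$ is representable, so $\mathcal{L}=\mathcal{L}(\{\Sigma\})$ for some $\Sigma\in\PD_m$. Then $\mathcal{L}$ is in particular complete, hence a semigaussoid, hence satisfies (\ref{eq:moreci1})--(\ref{eq:moreci5}) by Lemma~\ref{thm:moreci}. For weak transitivity, apply (\ref{eq:weaktran}) pointwise to $\Sigma$: whichever of the two disjuncts in the conclusion holds for $\Sigma$ is then contained in $\mathcal{L}(\{\Sigma\})=\mathcal{L}$, establishing~(\ref{eq:weaktran-rel}). This direction requires no additional bookkeeping.

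For sufficiency I would proceed by elimination. By the enumeration preceding the theorem, there are exactly 157 equivalence classes of semigaussoids on $[4]$, and 53 of these are already known from \cite{matus:2007} to be representable. It thus suffices to verify that each of the remaining 104 semigaussoids violates at least one of the conditions (\ref{eq:moreci1})--(\ref{eq:moreci5}) or (\ref{eq:weaktran-rel}). Since a semigaussoid on $[4]$ contains only finitely many couples, this verification reduces to a mechanical loop: for each of the 104 classes and each choice of distinct $i,j,k,l\in[4]$ and $C\subseteq[4]\setminus ijkl$, check whether the antecedent of some implication lies in $\mathcal{L}$ while the consequent does not, and analogously for the weak transitivity disjunction. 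The excerpt already notes that 10 of these semigaussoids are killed by (\ref{eq:moreci1})--(\ref{eq:moreci5}); for the remaining 94 the obstruction must therefore come from weak transitivity, and locating the offending triple $(i,j,k)$ and set $C$ in each case is a routine table lookup.

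The genuine obstacle, as always with a theorem of this shape, is the constructive side that sits behind the identification of the 53 representable classes in the first place. For each of those 53 relations one must exhibit a concrete $\Sigma\in\PD_4$ whose zero subdeterminant pattern matches the relation exactly, i.e., satisfies every CI couple in $\mathcal{L}$ while avoiding every couple not in $\mathcal{L}$. The parametrizations that accomplish this vary from class to class and must be tuned both to preserve positive definiteness and to keep all off-pattern subdeterminants nonzero; this is the step that does not admit a uniform argument. Once those 53 witnesses are in hand, however, the theorem follows by combining them with the enumeration and the elimination described above.
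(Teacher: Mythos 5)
Your outline is sound, but you should be aware that the paper itself offers no proof of this theorem: it is stated explicitly as a summary of results established in \cite{matus:2007}, and all of the substantive content is imported from there. Measured against that source, your reconstruction has the right shape. The necessity direction is handled exactly as the surrounding text does it: representable $\Rightarrow$ complete $\Rightarrow$ semigaussoid, Lemma~\ref{thm:moreci} for (\ref{eq:moreci1})--(\ref{eq:moreci5}), and a pointwise application of (\ref{eq:weaktran}) to the representing matrix for (\ref{eq:weaktran-rel}); note that this last step genuinely requires a single representing matrix and would fail for merely complete relations, which you implicitly respect. Your sufficiency argument --- eliminate the $157-53=104$ non-representable semigaussoid classes by exhibiting a violated axiom in each, then conclude that any semigaussoid satisfying all axioms must be one of the 53 with an explicit witness --- is the structure of the cited proof.

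Two caveats. First, your phrase ``the obstruction must therefore come from weak transitivity'' for the 94 classes surviving (\ref{eq:moreci1})--(\ref{eq:moreci5}) is not a deduction; it is precisely the claim to be verified, and a priori one of those classes could satisfy every listed axiom while failing to be representable, which would refute the theorem rather than prove it. As a description of a finite check it is acceptable, but the check is the proof, not a consequence of the count. Second, you correctly isolate the two irreducible computational burdens --- the 53 positive definite witnesses whose vanishing pattern matches each representable class exactly, and the 104 violation certificates --- but neither is executed here, so what you have is a correct reduction of the theorem to the computations carried out in \cite{matus:2007}, which is exactly the status the paper assigns to it.
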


To facilitate comparison, we remark that in \cite{matus:2007} a relation
obeying the requirements of a semigaussoid as well as the weak transitivity
property was termed a `gaussoid'.  This motivated choosing the terminology
`semigaussoid' here.

\section{Algebraic techniques}
\label{sec:algebra}

The conditional independence model associated with a relation
$\mathcal{L}\subseteq\mathcal{R}(m)$ corresponds to the algebraic set of
covariance matrices $V_\pd(\mathcal{L})$ defined by the vanishing of
certain `almost-principal' determinants; recall (\ref{eq:Vpd}).  It is thus
natural to begin a study of the geometry of $V_\pd(\mathcal{L})$ by
studying associated ideals of polynomials; see \cite{cox:2007} for some
background.  Before turning to algebraic notions however, we introduce
correlation matrices as a means of reducing later computational effort.

\subsection{Correlation matrices}
\label{subsec:corr}

The {\em correlation matrix} $R=(r_{ij})$ of a (positive definite)
covariance matrix $\Sigma=(\sigma_{ij})$ is the matrix with entries
\[
r_{ij} = \frac{\sigma_{ij}}{\sqrt{\sigma_{ii}\sigma_{jj}}}.
\]
The matrix $R$ is again positive definite, and in particular, $|r_{ij}|<1$
for all $i\not= j$.

\begin{lemma}
  \label{thm:corr}
  Let $R$ be the correlation matrix of $\Sigma\in\PD_m$, and $A,B,C
  \subset [m]$ pairwise disjoint index sets.  Then the conditional
  independence $A\ind B\,|\,C$ holds in $X \sim \ND_m(\mu,\Sigma)$ if and
  only if it holds in $Y\sim\ND_m(0,R)$.
\end{lemma}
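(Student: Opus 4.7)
The plan is to reduce the claim to the algebraic characterization of conditional independence given in Proposition~\ref{prop:cic} and then to observe that the passage from $\Sigma$ to $R$ is a conjugation by an invertible diagonal matrix, which preserves the ranks of all the relevant submatrices.

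First I would note that $R = D^{-1}\Sigma D^{-1}$ where $D$ is the diagonal matrix with entries $\sqrt{\sigma_{11}},\dots,\sqrt{\sigma_{mm}}$. In particular, for any index sets $U,V\subseteq[m]$ one has the block identity
\[
R_{U,V} \;=\; D_U^{-1}\,\Sigma_{U,V}\,D_V^{-1},
\]
where $D_U$ and $D_V$ denote the principal diagonal submatrices of $D$ indexed by $U$ and $V$. Since $D_U$ and $D_V$ are invertible, multiplication on the left and the right by them does not affect the rank, so $\rk R_{U,V}=\rk \Sigma_{U,V}$.

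Next I would appeal to Proposition~\ref{prop:cic}: the statement $A\ind B\,|\,C$ in a regular Gaussian vector with covariance $\Sigma$ is equivalent to $\rk \Sigma_{AC,BC}=|C|$, and similarly with $R$ in place of $\Sigma$. Applying the rank identity above with $U=AC$ and $V=BC$ shows that the two rank conditions coincide, which is exactly the claim. (Equivalently, one can work couple-wise: each almost-principal minor $\det R_{iC,jC}$ equals $\det \Sigma_{iC,jC}$ divided by the product of the positive scalars $\sigma_{ll}$ for $l\in iC\cup jC$, so one vanishes iff the other does.)

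There is no real obstacle here beyond bookkeeping; the only thing to be mildly careful about is that $R$ is indeed positive definite (so that Proposition~\ref{prop:cic} applies to it as a regular Gaussian covariance), which follows immediately from $R=D^{-1}\Sigma D^{-1}$ with $D$ invertible and $\Sigma\in\PD_m$. The zero mean chosen for $Y$ is irrelevant since conditional independence in the Gaussian case depends only on the covariance matrix, as reflected in Proposition~\ref{prop:cic}.
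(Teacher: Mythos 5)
Your proposal is correct and follows essentially the same route as the paper: the paper's proof likewise reduces to Proposition~\ref{prop:cic} by noting that each almost-principal minor $\det(R_{iC,jC})$ is a positive scalar multiple of $\det(\Sigma_{iC,jC})$ (the diagonal-rescaling identity you state in your parenthetical). Your rank-based formulation via $R_{U,V}=D_U^{-1}\Sigma_{U,V}D_V^{-1}$ is an equivalent packaging of the same observation.
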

\begin{proof}
  Given a CI couple $ij|C$, we have that
  \begin{align*}
    \det(R_{iC,jC})=\frac{1}{\prod_{c\in C}
      \sigma_{rr}}\cdot \frac{1}{\sqrt{\sigma_{ii}\sigma_{jj}}} \cdot
    \det(\Sigma_{iC,jC}),
  \end{align*}
  and thus the claim follows from Proposition~\ref{prop:cic}.
\end{proof}

\begin{example}
  \label{ex:seth}
  Suppose $m\ge 3$ and let $\mathcal{L}$ be the relation given by the
  following pairwise CI statements that each involve three consecutive
  indices (modulo $m$):
  \[
  12|3,\, 23|4,\dots,
  (m-1)m|1,\,1m|2.
  \]
  When stated in terms of the correlation matrix $R=(r_{ij})$, the couple
  $ij|k$ makes the requirement that
  $$
  \det(R_{ik, jk})=r_{ij}-r_{ik}r_{jk}=0.
  $$
  Under the relation $\mathcal{L}$, we thus have
  $r_{i,i+1}=r_{i,i+2}r_{i+1,i+2}$ for all $i\in[m]$, where we take the
  indices modulo $m$. This implies that
  $$
  r_{12}=r_{13}r_{23}=r_{13}r_{24}r_{34}=r_{13}r_{24}r_{35}r_{45}=\cdots
  =\left(\prod_{i=1}^m r_{i,i+2}\right)r_{12}.
  $$
  Since $|r_{i,i+2}|<1$ for all $i \in [m]$, we must have $r_{12}=0$.
  We have thus proved the CI implication $\mathcal{L}\Rightarrow \{12\}$,
  which generalizes the implication (\ref{eq:moreci4}).  
  
  No proper subset $\mathcal{K}\subsetneq\mathcal{L}$ implies $\{12\}$ if
  $m\ge 4$.  This is shown in \cite{sullivant:2009} by a suitable
  counterexample.  We remark that the implication $\mathcal{L}\Rightarrow
  \{12\}$ is also proven in \cite{sullivant:2009} using results on the
  primary decomposition of binomial ideals.  This also sheds light on how
  the implication may fail for singular covariance matrices.
  
  An important feature of this example is that it furnishes an infinite
  family of CI implications that cannot be deduced from other implications.
  It thus establishes that there does not exist a finite set of CI
  implications, from which all other implications can be deduced; compare
  \cite{sullivant:2009,studeny:1992}.
\end{example}

Correlation matrices can also be used to address the smoothness problem
posed in Question~\ref{q:smooth}.  Let $\PD_{m,1}\subset\PD_m$ be the set
of positive definite matrices with ones along the diagonal.  Given a
relation $\mathcal{L}$, we can define the set
\[
V_\cor(\mathcal{L}) = \big\{\, R\in \PD_{m,1} \::\:
\det(R_{iC,jC})=0 \;\text{for all}\; ij|C\in\mathcal{L}\, \big\}.
\]

\begin{lemma}
  \label{lem:R-smooth}
  The model $V_\pd(\mathcal{L})$ is a smooth manifold if and only if
  $V_\cor(\mathcal{L})$ is a smooth manifold.
\end{lemma}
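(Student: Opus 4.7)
The plan is to exhibit a diffeomorphism between $\PD_m$ and the product $\PD_{m,1}\times(0,\infty)^m$ that carries $V_\pd(\mathcal{L})$ onto $V_\cor(\mathcal{L})\times(0,\infty)^m$, so that smoothness of one model at a point is equivalent to smoothness of the other at the corresponding point.

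First I would introduce the map $\Phi:\PD_m\to \PD_{m,1}\times(0,\infty)^m$ sending
\[
\Sigma=(\sigma_{ij}) \;\longmapsto\; \bigl(R,\,(\sqrt{\sigma_{11}},\dots,\sqrt{\sigma_{mm}})\bigr),
\]
where $R$ is the correlation matrix of $\Sigma$. Its inverse is $(R,(d_1,\dots,d_m))\mapsto DRD$ with $D=\mathrm{diag}(d_1,\dots,d_m)$. Both $\Phi$ and $\Phi^{-1}$ are clearly $C^\infty$, since the entries of $R$ are rational functions of the entries of $\Sigma$ with non-vanishing denominators $\sqrt{\sigma_{ii}\sigma_{jj}}$ on $\PD_m$, and conversely the entries of $\Sigma$ are polynomial in $R$ and the $d_i$. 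Hence $\Phi$ is a diffeomorphism of open subsets of Euclidean space.

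Next, I would verify that $\Phi$ restricts to a bijection between $V_\pd(\mathcal{L})$ and $V_\cor(\mathcal{L})\times(0,\infty)^m$. This is immediate from Lemma~\ref{thm:corr}: for every couple $ij|C\in\mathcal{L}$, the minor $\det(\Sigma_{iC,jC})$ vanishes if and only if the corresponding minor $\det(R_{iC,jC})$ vanishes (the two differ by the positive factor $\sqrt{\sigma_{ii}\sigma_{jj}}\prod_{c\in C}\sigma_{cc}$). Thus $\Sigma\in V_\pd(\mathcal{L})$ if and only if $R\in V_\cor(\mathcal{L})$, regardless of the diagonal scales.

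Finally I would conclude as follows. Because $\Phi$ is a diffeomorphism of ambient open sets and carries $V_\pd(\mathcal{L})$ onto $V_\cor(\mathcal{L})\times(0,\infty)^m$, the first set is a smooth embedded submanifold of $\PD_m$ if and only if the second is a smooth embedded submanifold of $\PD_{m,1}\times(0,\infty)^m$. Since $(0,\infty)^m$ is itself a smooth manifold, the product $V_\cor(\mathcal{L})\times(0,\infty)^m$ is smooth if and only if $V_\cor(\mathcal{L})$ is smooth (smoothness of a product of subsets of Euclidean spaces is equivalent to smoothness of each factor). This yields the claimed equivalence. The only step requiring care is checking that no point of $\PD_m$ is lost or has a spurious singularity introduced by the rescaling, and this is ensured by the positivity of the diagonal entries throughout $\PD_m$; no real obstacle arises.
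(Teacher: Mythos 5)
Your proposal is correct and follows essentially the same route as the paper, which also proves the lemma by exhibiting the diffeomorphism $\PD_m\to(0,\infty)^m\times\PD_{m,1}$ sending $\Sigma$ to its diagonal entries together with its correlation matrix (your use of $\sqrt{\sigma_{ii}}$ rather than $\sigma_{ii}$ is an immaterial reparametrization of the $(0,\infty)^m$ factor). The paper states this in one sentence; you have merely spelled out the verification that the diffeomorphism matches $V_\pd(\mathcal{L})$ with $V_\cor(\mathcal{L})\times(0,\infty)^m$, which relies on Lemma~\ref{thm:corr} exactly as you indicate.
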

\begin{proof}
  The map that takes a positive definite matrix $\Sigma=(\sigma_{ii})$ as
  argument and returns the vector of diagonal entries
  $(\sigma_{11},\dots,\sigma_{mm})$ and the correlation matrix of $\Sigma$
  is a diffeomorphism $\PD_m\to (0,\infty)^m\times \PD_{m,1}$.
\end{proof}

According to the next fact, we may pass to dual relations when studying the
geometry of $V_\cor(\mathcal{L})$.

\begin{lemma}
  \label{thm:inverse}
  If $\mathcal{L}$ and $\mathcal{L}^d$ are dual relations of each other,
  then $V_\cor(\mathcal{L})$ is diffeomorphic to $V_\cor(\mathcal{L}^d)$.
\end{lemma}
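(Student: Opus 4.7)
The plan is to construct an explicit diffeomorphism between $V_\cor(\mathcal{L})$ and $V_\cor(\mathcal{L}^d)$ by composing matrix inversion with a diagonal normalization that restores unit diagonal entries. The guiding principle is Lemma~\ref{lem:dual}(i): inversion of a positive definite matrix swaps a CI relation for its dual at the level of vanishing subdeterminants. The only obstacle to using $R \mapsto R^{-1}$ directly is that the inverse of a correlation matrix generally fails to have ones on the diagonal, so we must renormalize.

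Concretely, for $R \in V_\cor(\mathcal{L})$, let $D(R)$ denote the diagonal matrix whose entries are the diagonal entries of $R^{-1}$ (all strictly positive since $R^{-1} \in \PD_m$), and define
\[
\phi(R) \;=\; D(R)^{-1/2}\, R^{-1}\, D(R)^{-1/2}.
\]
The map $\phi$ is a smooth map from $\PD_{m,1}$ to $\PD_{m,1}$: positive definiteness is preserved by conjugation with $D(R)^{-1/2}$, and by construction $\phi(R)$ has ones on the diagonal.

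Next I would verify that $\phi$ sends $V_\cor(\mathcal{L})$ into $V_\cor(\mathcal{L}^d)$. For any couple $ij|C$, rescaling rows and columns of $R^{-1}$ by positive diagonal factors merely multiplies the almost-principal minor $\det((R^{-1})_{iC,jC})$ by a nonzero constant, so
\[
\det\bigl(\phi(R)_{iC,jC}\bigr) = 0 \;\Longleftrightarrow\; \det\bigl((R^{-1})_{iC,jC}\bigr) = 0.
\]
Combined with Lemma~\ref{lem:dual}(i), which gives $\mathcal{L}^d \subseteq \mathcal{L}(\{R\})^d = \mathcal{L}(\{R^{-1}\})$, this shows $\phi(R) \in V_\cor(\mathcal{L}^d)$.

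Finally, I would show that $\phi$ is an involution, hence a diffeomorphism. Writing $S = \phi(R)$, we have $S^{-1} = D(R)^{1/2} R\, D(R)^{1/2}$. Because $R$ has unit diagonal, the diagonal of $S^{-1}$ is exactly $D(R)$, so $D(S) = D(R)$ and
\[
\phi(S) = D(R)^{-1/2}\bigl(D(R)^{1/2} R\, D(R)^{1/2}\bigr) D(R)^{-1/2} = R.
\]
Thus $\phi \circ \phi = \mathrm{id}$, and by symmetry the same formula gives the inverse diffeomorphism $V_\cor(\mathcal{L}^d) \to V_\cor(\mathcal{L})$. The only mildly delicate step is confirming that rescaling preserves vanishing of the relevant minors, but this is a one-line consequence of multilinearity of the determinant in rows and columns.
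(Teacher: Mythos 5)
Your proof is correct and is essentially the paper's argument: your map $\phi$ is exactly the composition of matrix inversion with the correlation-normalization map $\Sigma\mapsto D^{-1/2}\Sigma D^{-1/2}$ that the paper uses, and the involution property together with Lemma~\ref{lem:dual}(i) and the rescaling-invariance of the almost-principal minors (the paper's Lemma~\ref{thm:corr}) are the same ingredients. The only difference is that you carry out the involution computation explicitly where the paper merely asserts it, which is a welcome bit of extra detail.
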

\begin{proof}
  Let $g$ be the map given by matrix inversion and $h$ the map from a
  positive definite matrix to its correlation matrix.  By concatenation, we
  obtain the smooth map $h\circ g:\PD_{m,1}\to\PD_{m,1}$.  This map is its
  own inverse and, thus, $h\circ g:\PD_{m,1}\to\PD_{m,1}$ is a
  diffeomorphism.
  
  By Lemma~\ref{lem:dual}, if $R\in V_\cor(\mathcal{L})$ then
  $g(R)=R^{-1}\in V_\pd(\mathcal{L}^d)$, and the correlation matrix
  $h(R^{-1})$ is in $V_\cor(\mathcal{L}^d)$ according to
  Lemma~\ref{thm:corr}.  Since $(\mathcal{L}^d)^d =\mathcal{L}$, the
  diffeomorphism $h\circ g$ is a bijection between $V_\cor(\mathcal{L})$
  and $V_\cor(\mathcal{L}^d)$.
\end{proof}


\subsection{Conditional independence ideals}
\label{subsec:ci-ideals-saturation}

Let $\mathbb{R}[\mathbf{r}]=\mathbb{R}[r_{ij} \::\: 1 \leq i < j \leq
m]$ be the real polynomial ring associated with the entries $r_{ij}$
of a correlation matrix $R$.  The algebraic geometry of the set
$V_\cor(\mathcal{L})$ is captured by the vanishing ideal
\[
\mathcal{I}(V_\cor(\mathcal{L})) \;=\; \big\{ f \in\mathbb{R}[\mathbf{r}] \::\: f(R)=0 \;\text{for
  all}\; R\in V_\cor(\mathcal{L}) \big\}.
\]
However, it is generally difficult to compute this ideal, where computing
refers to determining a finite generating set.  Instead we start algebraic
computations with the {\em (pairwise) conditional independence ideal}
\[
I_{\mathcal{L}}\;=\; \langle \, \det(R_{iC,jC})\::\: ij|C \in \mathcal{L}
\rangle\;\subseteq\; \mathcal{I}(V_\cor(\mathcal{L})).
\]

\begin{example}
  \label{ex:intersection-3}
  If $\mathcal{L}=\{12|3,13|2\}$ then $I_\mathcal{L} = \langle
  r_{12}-r_{13}r_{23},r_{13}-r_{12}r_{23} \rangle$.  By a simple
  calculation using that $r_{23}^2\not=1$ for correlation matrices, or by
  appealing to the general intersection property (\ref{c5}), we obtain that
  $r_{12}=r_{13}=0$ for all $R\in V_\cor(\mathcal{L})$.  In fact,
  $V_\cor(\mathcal{L})$ is the set of block-diagonal positive definite
  matrices with $r_{12}=r_{13}=0$.  It follows that $\mathcal{I}(V_\cor(\mathcal{L}))=\langle
  r_{12},r_{13}\rangle \not= I_\mathcal{L}$.
\end{example}

\begin{proposition}
  \label{prop:m4-radical}
  Let $\mathcal{L}$ be a relation on $[m]=[4]$.  If $\mathcal{L}$ is
  representable, then $I_\mathcal{L}$ is a radical ideal.  The ideal
  $I_\mathcal{L}$ need not be radical even if $\mathcal{L}$ is complete.
\end{proposition}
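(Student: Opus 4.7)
The proposition has two essentially independent assertions, and for each the enumeration carried out in Section~\ref{subsec:all-models} reduces the problem to a finite computer-algebra verification. The plan is to work in the polynomial ring $\mathbb{R}[\mathbf{r}]=\mathbb{R}[r_{ij}:1\le i<j\le 4]$ (six variables), form the ideal $I_\mathcal{L}$ generated by the almost-principal minors $\det(R_{iC,jC})$ for $ij|C\in\mathcal{L}$, and compare it with its radical using a system such as Singular or Macaulay2.

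For the first assertion, my plan is to iterate over the $53$ equivalence classes of representable relations on $[4]$ catalogued in \cite{matus:2007} (and reproduced in Table~\ref{tab:rep} of the appendix), pick a representative $\mathcal{L}$ from each class, and check the equality $I_\mathcal{L}=\sqrt{I_\mathcal{L}}$ directly. Since the generators are minors of $R$ of size at most three, each ideal has at most a dozen generators of degree at most three in six variables, so the radical computation is entirely routine. I do not see a clean uniform structural argument (representability produces a generic point realizing exactly $\mathcal{L}$, but that alone does not force irreducibility of $V_\cor(\mathcal{L})$ nor radicality of $I_\mathcal{L}$), so the case-by-case check is the cleanest route. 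Symmetry can be exploited by working only with equivalence-class representatives and, via Lemma~\ref{thm:inverse} and Lemma~\ref{lem:dual}, only one of each pair $\mathcal{L},\mathcal{L}^d$.

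For the second assertion, only one counterexample is needed. My plan is to scan the $48$ complete but non-representable relations produced in the proof of Theorem~\ref{thm:all-complete} and compute $\sqrt{I_\mathcal{L}}$ for each until I hit one for which $I_\mathcal{L}\subsetneq\sqrt{I_\mathcal{L}}$. The most promising candidates are relations whose completeness comes from weak transitivity (\ref{eq:weaktran-rel}) or from an intersection-of-representables decomposition (as in Theorem~\ref{thm:complete-intersection-represent}), since there the variety visibly splits into components whose combined vanishing cannot be detected from the generating minors without squaring; an explicit witness polynomial $f\in\sqrt{I_\mathcal{L}}\setminus I_\mathcal{L}$ extracted from the CAS output then certifies non-radicality.

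The main obstacle is the first part: the absence of a uniform structural argument forces a bookkeeping exercise over $53$ classes, and the temptation to shortcut via a single trick (e.g.\ showing the generators form a Gr\"obner basis with squarefree leading terms in some order) is unlikely to succeed uniformly given the heterogeneity of the models. The saving feature is that each individual radical computation is small enough to be dispatched mechanically, so the proof proceeds by systematic verification rather than by a single conceptual insight.
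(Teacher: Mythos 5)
Your plan coincides with the paper's proof: the first assertion is verified by a {\tt Singular} computation over all 53 equivalence classes of representable relations, and the second by exhibiting a complete non-representable relation with non-radical ideal (the paper's witness is $\mathcal{L}=\{12,\,14|3,\,14|23,\,23|14\}$). One caveat on your proposed optimization: reducing to one member of each permutation-equivalence class is fine, since a permutation of $[4]$ acts as a ring automorphism of $\mathbb{R}[\mathbf{r}]$ permuting the variables and hence preserves radicality, but the further reduction to one of each dual pair $\mathcal{L},\mathcal{L}^d$ is not justified --- Lemma~\ref{thm:inverse} only gives a diffeomorphism of the real positive-definite sets induced by matrix inversion, which is not a polynomial automorphism carrying $I_{\mathcal{L}}$ to $I_{\mathcal{L}^d}$, so radicality of one ideal does not transfer to the other. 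Since you present the full 53-case check as the primary route, this does not damage the proof, but you should drop the duality shortcut (or restrict it to the permutation symmetry) when carrying out the computation.
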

\begin{proof}
  We verified the assertion about representable relations by computation of
  all 53 cases with the software package {\tt Singular} \cite{singular}.
  The relation $\mathcal{L}=\{12,14|3,14|23,23|14\}$ is an example of a
  complete relation with $I_\mathcal{L}$  not radical.
\end{proof}

Algebraic calculations with an ideal $I\subset\RRR[\mathbf{r}]$
directly reveal geometric structure of the associated complex
algebraic variety
\[
V_\CCC(I) = \big\{\, R\in \mathbb{S}_{m,1}(\CCC)  \::\:
f(R)=0 \;\text{for all}\; f\in I\, \big\}.
\]
Here, $\mathbb{S}_{m,1}(\CCC)$ is the space of complex symmetric $m\times m$
matrices with ones on the diagonal.  Studying the complex variety
will provide insight into the geometry of the corresponding set of
correlation matrices $V_{cor}(I)$ but, as we will see later, care must
be taken when making this transfer.  

For an ideal $I$ and a polynomial $h$, define the {\em saturation
  ideal}:
\[
(I:h^\infty) = \{\,f \in \RRR[\mathbf{r}]:\; f  h^n \in
I\;\hbox{for some } n \in \mathbb{N}\,\}.
\] 
The variety $V_\CCC(I:h^\infty)$ is the smallest variety containing the set
difference $V_\CCC(I)\setminus V_\CCC(\langle h\rangle)$.  When dealing
with positive definite matrices that have all principal minors positive it
holds that
\[
I_\mathcal{L} \;\subseteq\; (I_\mathcal{L} : D^\infty) \;
\subseteq\; \mathcal{I}(V_\cor(\mathcal{L})),
\]
where $D\in\RRR[\mathbf{r}]$ is the product of all the principal
minors of $R$. Although we have that
$(I_\mathcal{L}:(1-r_{23}^2)^\infty) =
\mathcal{I}(V_\cor(\mathcal{L}))$ in Example~\ref{ex:intersection-3},
saturation with respect to principal minors need not yield the
vanishing ideal $\mathcal{I}(V_\cor(\mathcal{L}))$ in general.  This
occurs for the relations on the left hand side of the implications in
Lemma~\ref{thm:moreci}; saturation with respect to the principal
minors does not change the ideals $I_\mathcal{L}$ considered in the
proof of this lemma in Section~\ref{subsec:primdec}.

If $A=(a_{ij})$ and $B=(b_{ij})$ are matrices in $\PD_m$, then the
Hadamard product $A*B=(a_{ij}b_{ij})$ is a principal submatrix of the
Kronecker product $A\otimes B$.  Hence, $A*B$ is also positive
definite.  As pointed out in \cite{matus:2005}, it can be useful to
consider Hadamard products of $R$ and
$R_\pi=(r_{\pi(i)\pi(j)})$ for permutations $\pi$ on $[m]$ in order to
further enlarge the ideal $I_\mathcal{L}$ by saturation on principal
minors.

\begin{example}
  If $\mathcal{L}$ is the relation from Example~\ref{ex:seth}, then
  $r_{12}$ is seen to be in $I_\mathcal{L}:(1-\prod_{i=1}^m
  r_{i,i+2})^\infty$ and thus in the vanishing ideal
  $\mathcal{I}(V_\cor(\mathcal{L}))$.  The polynomial $1-\prod_{i=1}^m
  r_{i,i+2}$ is a $2\times 2$ minor of a Hadamard product, but of course it
  is also clearly non-zero over $V_\cor(\mathcal{L})$ because each
  $r_{ij}\in(-1,1)$.
\end{example}


However, saturation with respect to `Hadamard product minors' does not seem
to provide the vanishing ideal $\mathcal{I}(V_\cor(\mathcal{L}))$ in
general; compare \cite{matus:2005}.



\subsection{Primary decomposition}
\label{subsec:primdec}

A variety $V_\CCC(I)$ is {\em irreducible} if it cannot be written as
a union of two proper subvarieties of $\mathbb{S}_{m,1}(\CCC)$.  Every
variety has an irreducible decomposition,
\begin{equation}
  \label{eq:irred-decomp}
V_\CCC(I)= V_\CCC(Q_1)\cup\dots\cup V_\CCC(Q_r),
\end{equation}
where the components $V_\CCC(Q_i)$ are irreducible varieties.  The
decomposition is unique up to order when it is minimal, that is, no
component is contained in another; see \cite{cox:2007}.  In that case,
the $V_\CCC(Q_i)$ are referred to as the {\em irreducible components}
of $V_\CCC(I)$.  An irreducible decomposition can be computed by
calculating a {\em primary decomposition} of the ideal $I$, which
writes the ideal as an intersection of so-called primary ideals, $I =
\cap_{i=1}^r Q_i$.  If $I$ is radical then $I$ has an up to order
unique minimal decomposition as an intersection of prime ideals $Q_i$.
Minimality means again that $Q_i\not\subseteq Q_j$ for $i\not= j$.
See again \cite{cox:2007} for the involved algebraic notions.


The computation of primary decompositions of the CI ideals
$I_{\mathcal{L}}$ is in particular useful for investigating CI
implications.  We now show how to use this technique by giving a
computer-aided proof of Lemma~\ref{thm:moreci}.

\begin{proof}[Proof of Lemma~\ref{thm:moreci}]
  By considering the conditional covariance matrix (or Schur complement)
  for $ijkl$ given $C$, it suffices to prove the implications for the case
  $C=\emptyset$.  We may assume $m=4$ and set $i=1$, $j=2$, $k=3$ and
  $l=4$.  We proceed in reverse order, which roughly corresponds to the
  difficulty of the implications.
 
  {\em Implication (\ref{eq:moreci5})}: Let $\mathcal{L}=\{ 12|3 ,
  23|4 , 34|1 , 14|2 \}$.  We need to show that the vanishing ideal
  $\mathcal{I}(V_\cor(\mathcal{L}))$ contains $r_{12}$.  A primary
  decomposition of the CI ideal $I_{\mathcal{L}}$, which is radical,
  is given by $I_{\mathcal{L}}=\cap_{i=1}^3Q_i$ with the three
  components: 
  \begin{align*}
    Q_1 & =  \langle r_{24}r_{13}-1, \ldots \rangle,&& &
    Q_2 & =  \langle r_{24}r_{13}+1, \ldots \rangle,&& &
    Q_3 & =  \langle r_{12},r_{14},r_{23},r_{34} \rangle .
  \end{align*}
  The claim follows as only the variety of $Q_3$ intersects
  $\PD_{4,1}$; recall Example~\ref{ex:seth}.
  
  {\em Implication (\ref{eq:moreci4})}: For the relation $\mathcal{L}=\{
  12|3 , 13|4 , 14|2 \}$, the ideal $I_{\mathcal{L}}$ is radical and has
  a primary decomposition with the two components:
  \begin{align*}
    Q_1 & =  \langle r_{23}r_{24}r_{34}-1, \ldots \rangle, &
    Q_2 & =  \langle r_{12},r_{13},r_{14} \rangle .
  \end{align*}
  Since $r_{24}r_{34}r_{23}-1 < 0$ over $\PD_{4,1}$, the variety
  $V_{\CCC}(Q_1)$ does not intersect $\PD_{4,1}$. Therefore, every matrix
  $R=(r_{ij})$ in $V_\cor(\mathcal{L})$ has $r_{12}=0$.
   
  {\em Implication (\ref{eq:moreci3})}: For the relation $\mathcal{L}=\{ 12
  , 24|3 , 34|1 , 13|24 \}$, the ideal $I_{\mathcal{L}}$ is radical and has
  a primary decomposition with the two components:
  \begin{eqnarray*}
    Q_1 & = & \langle r_{14}^2r_{23}^2-r_{14}^2-r_{24}^2+1,
    r_{23}r_{34}-r_{24},r_{13}r_{14}-r_{34},
    \ldots \rangle , \\ 
    Q_2 & = & \langle r_{12},r_{13},r_{24},r_{34} \rangle.
  \end{eqnarray*}
  Since the polynomial
  \[
  r_{14}^2r_{23}^2-r_{14}^2-r_{23}^2r_{13}^2r_{14}^2+1 = 
  (1-r_{14}^2) + r_{14}^2r_{23}^2(1-r_{13}^2) 
  \]
  is in $Q_1$ but positive on $\PD_{4,1}$, the variety $V_\CCC(Q_1)$ does
  not intersect $\PD_{4,1}$.
  
  {\em Implication (\ref{eq:moreci2})}: For the relation $\mathcal{L}=\{ 12
  , 34|1 , 34|2 , 12|34 \}$, the ideal $I_{\mathcal{L}}$ is radical and has
  a primary decomposition with the components:
  \begin{eqnarray*}
    Q_1 & = & \langle r_{13}^2r_{24}^2-r_{13}^2-r_{24}^2+r_{34}^2,
    r_{23}r_{24}-r_{34}, \ldots 
    \rangle , \\
    Q_2 & = & \langle r_{12},r_{14},r_{23},r_{34} \rangle ,\\
    Q_3 & = & \langle r_{12},r_{13},r_{24},r_{34} \rangle.
  \end{eqnarray*}
  Only $Q_1$ does not already contain $r_{34}$.  Let $R=(r_{ij})$ be a positive
  definite matrix in $V_\CCC(Q_1)$. Since
  \[
  r_{13}^2r_{24}^2-r_{13}^2-r_{24}^2+r_{23}^2r_{24}^2 =
  r_{13}^2(r_{24}^2-1)+r_{24}^2(r_{23}^2-1)\in Q_1,
  \]
  the matrix entries satisfy $r_{13}=r_{24}=0$ and, thus,
  $r_{34}=r_{23}r_{24}=0$.
  
  {\em Implication (\ref{eq:moreci1})}: If $\mathcal{L}=\{ 12 , 34 , 13|24
  , 24|13 \}$, then $I_{\mathcal{L}}$ is radical and has a primary
  decomposition with the three components:
  \begin{eqnarray*}
    Q_1 & = & \langle
    r_{13}^2-r_{14}r_{23}-1,r_{24}-r_{13},r_{12},r_{34}\rangle, \\ 
    Q_2 & = & \langle r_{13}^2+r_{14}r_{23}-1,r_{24}+r_{13},r_{12},r_{34}
    \rangle, \\ 
    Q_3 & = & \langle r_{12},r_{13},r_{24},r_{34} \rangle.
  \end{eqnarray*}
  The varieties of $Q_1$ and $Q_2$ do not intersect $\PD_{4,1}$, which
  implies $r_{13}=0$ for the matrices in $V_\cor(\mathcal{L})$.  To
  see this, note that for a symmetric matrix $R=(r_{ij})$ with ones on
  the diagonal, it holds that $\det(R_{123,123})+(r_{14}+r_{23})^2\in
  Q_1$ and $\det(R_{123,123})+(r_{14}-r_{23})^2\in Q_2$.  Hence, if
  $R=(r_{ij})$ is a real matrix in $V_\CCC(Q_1)$ or $V_\CCC(Q_2)$ then
  it is not positive definite as $ \det(R_{123,123})=-(r_{14}\pm
  r_{23})^2\le 0$.
\end{proof}

\section{Singular loci of representable models}
\label{sec:singularity}

We now return to the problem of Question~\ref{q:smooth} for $m=4$,
that is, identify the relations $\mathcal{L}$ on the index set
$[m]=[4]$ for which the set $V_\pd(\mathcal{L})$ is a smooth manifold.
According to Theorem~\ref{thm:repr-decomp-variety} every conditional
independence model is a union of representable models.  Moreover, by
Lemma~\ref{lem:R-smooth}, we may equivalently consider the set of
correlation matrices $V_\cor(\mathcal{L})$.  The focus of this section
is thus the geometry of $V_\cor(\mathcal{L})$ when $\mathcal{L}$ is a
representable relation on $[m]=[4]$.

\subsection{Irreducible decomposition}
\label{subsec:repr-irred-decomp}

The set $V_\cor(\mathcal{L})$ associated with a representable relation
$\mathcal{L}$ cannot be further decomposed when only considering sets
defined by CI constraints.  However, there is no reason why
$V_\cor(\mathcal{L})$ should not further decompose in an irreducible
decomposition; recall (\ref{eq:irred-decomp}).  Indeed, computing
primary decompositions in {\tt Singular} we observe the following (We
note that $I_\mathcal{L} = I_\mathcal{L} : D^\infty$ for all
representable relations $\mathcal{L}$ on $[m]=[4]$):

\begin{proposition}
  \label{prop:irred-decomp-repr}
  If $\mathcal{L}$ is a representable relation on $[m]=[4]$, then the
  conditional independence ideal $I_\mathcal{L}$ is a prime ideal
  except when $\mathcal{L}$ is equivalent to one of the relations
  $\mathcal{L}_{15}$, $\mathcal{L}_{24}$, $\mathcal{L}_{28}$ and
  $\mathcal{L}_{37}$ listed in Table~\ref{tab:rep} in
  Appendix~\ref{sec:appendix}.
\end{proposition}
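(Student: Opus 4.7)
The plan is to carry out a direct computation using the classification of representable relations on $[4]$ established in \cite{matus:2007}. By that classification there are exactly 53 equivalence classes of representable relations, with representatives $\mathcal{L}_1,\dots,\mathcal{L}_{53}$ listed in Table~\ref{tab:rep} of the appendix. For each representative it suffices to exhibit a primary decomposition of $I_{\mathcal{L}}\subseteq\RRR[\mathbf{r}]$ and check whether it consists of a single prime component.

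Concretely, for each $\mathcal{L}_k$ I would build the ideal $I_{\mathcal{L}_k}$ in \texttt{Singular} and call a primary decomposition routine such as \texttt{primdecGTZ}. Proposition~\ref{prop:m4-radical} asserts that $I_{\mathcal{L}_k}$ is radical, hence its primary components are prime and the minimal decomposition is unique up to order; counting components then determines whether $I_{\mathcal{L}_k}$ is prime. In parallel, I would verify the parenthetical claim by computing $I_{\mathcal{L}_k}:D^\infty$ with $D$ the product of the principal minors of $R$ and checking equality with $I_{\mathcal{L}_k}$, which is a routine saturation. The outcome of these runs is that a single prime component is returned in $53-4=49$ cases, while for $\mathcal{L}_{15}$, $\mathcal{L}_{24}$, $\mathcal{L}_{28}$, and $\mathcal{L}_{37}$ the decomposition splits into several prime components. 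For each of these four exceptional relations I would record the explicit primary decomposition in the paper, in the same style as the decompositions appearing in the proof of Lemma~\ref{thm:moreci}; the typical mechanism is that some almost-principal minor couples with another generator of $I_{\mathcal{L}_k}$ to yield a polynomial that factors over $\CCC$ (often as a difference of squares of the form $r_{ij}^2-1$ or $1\pm r_{ij}r_{kl}$), which produces additional components that intersect $\PD_{4,1}$ in a proper subset of $V_\cor(\mathcal{L}_k)$.

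The main obstacle is not conceptual but organizational: one must work with representatives of all 53 equivalence classes, ensure that every Gr\"obner basis computation terminates, and present the four exceptional decompositions in a compact, verifiable form. A secondary subtlety is that the decomposition is computed over $\CCC$ (or $\mathbb{Q}$), so one should confirm that the components genuinely remain distinct after base change and that none of them are conjugate pairs merged into a single real prime; this is automatic once radicality of $I_{\mathcal{L}_k}$ and primality of each reported component have been certified by \texttt{Singular}.
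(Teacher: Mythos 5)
Your proposal matches the paper's approach: the paper establishes this proposition purely by computing primary decompositions of $I_{\mathcal{L}}$ for all 53 representable relations in {\tt Singular} (noting in passing that $I_\mathcal{L}=I_\mathcal{L}:D^\infty$ in every case) and then records the explicit multi-component decompositions for $\mathcal{L}_{15}$, $\mathcal{L}_{24}$, $\mathcal{L}_{28}$ and $\mathcal{L}_{37}$ in Examples~\ref{ex:pd-15}--\ref{ex:pd-28}, exactly as you outline. The only minor discrepancy is your guess about the exceptional mechanism: in these four cases the extra components are not discarded as irrelevant (as in Lemma~\ref{thm:moreci}) but genuinely meet $\PD_{4,1}$, which is what makes the models decompose; this does not affect the correctness of your argument for the primality claim itself.
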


We now describe the primary decompositions of the four exceptional
representable relations.

\begin{example}
  \label{ex:pd-15}
  For the representable relation $\mathcal{L}_{15}=\{14,14|23,23,23|14\}$,
  the ideal $I_{\mathcal{L}_{15}}$ has 4 prime components:
  \begin{align*}
    Q_1 & =  \langle r_{12},r_{14},r_{23},r_{34} \rangle , &
    Q_2 & =  \langle r_{13},r_{14},r_{23},r_{24} \rangle ,\\
    Q_3 & =  \langle r_{14},r_{23},r_{12}+r_{34},r_{13}-r_{24} \rangle, &
    Q_4 & =  \langle r_{14},r_{23},r_{12}-r_{34},r_{13}+r_{24} \rangle.
  \end{align*}
  Hence, the model $V_\cor(\mathcal{L}_{15})$ is the union of four
  two-dimensional linear spaces intersected with the set of correlation
  matrices $\PD_{4,1}$.  Only matrices $R$ in $V_\cor(Q_3)$ and
  $V_\cor(Q_4)$ can represent $\mathcal{L}_{15}$ in the sense of
  $\mathcal{L}(\{ R\})=\mathcal{L}_{15}$.
\end{example}

\begin{example}
  \label{ex:pd-37}
  For $\mathcal{L}_{37}=\{12|3,12|4,34|1,34|2\}$, the ideal
  $I_{\mathcal{L}_{37}}$ has 4 two-dimensional prime components:
  \begin{align*}
    Q_1 & =  \langle r_{12},r_{13},r_{24},r_{34} \rangle , &
    Q_2 & =  \langle
    r_{12}-r_{34},r_{13}-r_{24},r_{14}-r_{23},r_{23}r_{24}-r_{34}
    \rangle ,\\ 
    Q_3 & =  \langle r_{12},r_{14},r_{23},r_{34} \rangle , &    
    Q_4 & =  \langle
    r_{12}+r_{34},r_{13}+r_{24},r_{14}+r_{23},r_{23}r_{24}-r_{34}
    \rangle .
  \end{align*}
  As in Example~\ref{ex:pd-15}, only two components, namely, $V_\cor(Q_2)$
  and $V_\cor(Q_4)$, contain matrices that represent $\mathcal{L}_{37}$.
  The points of $V_\cor(Q_2)$ and $V_\cor(Q_4)$ have the form $(x,x)$ and
  $(x,-x)$, respectively, where $x$ is on the conditional independence
  surface depicted in Figure~\ref{fig:comps24-28}(a).
\end{example}

\begin{example}
  \label{ex:pd-24}
  For $\mathcal{L}_{24}=\{12,23|14,24|3\}$, the ideal
  $I_{\mathcal{L}_{24}}$ has two prime components:
  \begin{align*}
    Q_1 & =  \langle
    r_{12},r_{23}r_{34}-r_{24},r_{13}r_{14}r_{34}-r_{14}^2-r_{34}^2+1\rangle,
    &  
    Q_2 & =  \langle r_{12},r_{23},r_{24} \rangle .
  \end{align*}
  Both of the 3-dimensional components intersect the set of correlation
  matrices $\PD_{4,1}$, and they intersect each other.  Only $V_\cor(Q_1)$
  contains representing matrices.  Note that $V_\cor(Q_1)$ is the image of
  the surface in $(r_{13},r_{14},r_{34})$-space given by
  $r_{13}r_{14}r_{34}-r_{14}^2-r_{34}^2+1=0$ under the 
  transformation setting $r_{24}=r_{23}r_{34}$ and leaving all other
  coordinates fixed.  Figure~\ref{fig:comps24-28}(b) displays this surface.
\end{example}

\begin{example}
  \label{ex:pd-28}
  For $\mathcal{L}_{28}=\{13|2,14,23|14,24|3\}$, the ideal
  $I_{\mathcal{L}_{28}}$ has two 2-dimensional prime components:
  \begin{align*}
    Q_1 & =  \langle r_{14},\,
    r_{12}r_{23}-r_{13},\,
    r_{23}r_{34}-r_{24},\,
    r_{12}^2+r_{34}^2-1
    \rangle,
    &
    Q_2 & =  \langle r_{13},r_{14},r_{23},r_{24} \rangle .
  \end{align*}
  The components intersect $\PD_{4,1}$ and each other.  The representing
  set $V_\cor(Q_1)$ is the image of a cylinder in
  $(r_{12},r_{23},r_{34})$-space under the transformation setting
  $r_{13}=r_{12}r_{23}$ and $r_{24}=r_{23}r_{34}$ and leaving the other
  coordinates fixed.
\end{example}

\begin{figure}[t]
  \centering
  (a)
  \includegraphics[width=5cm]{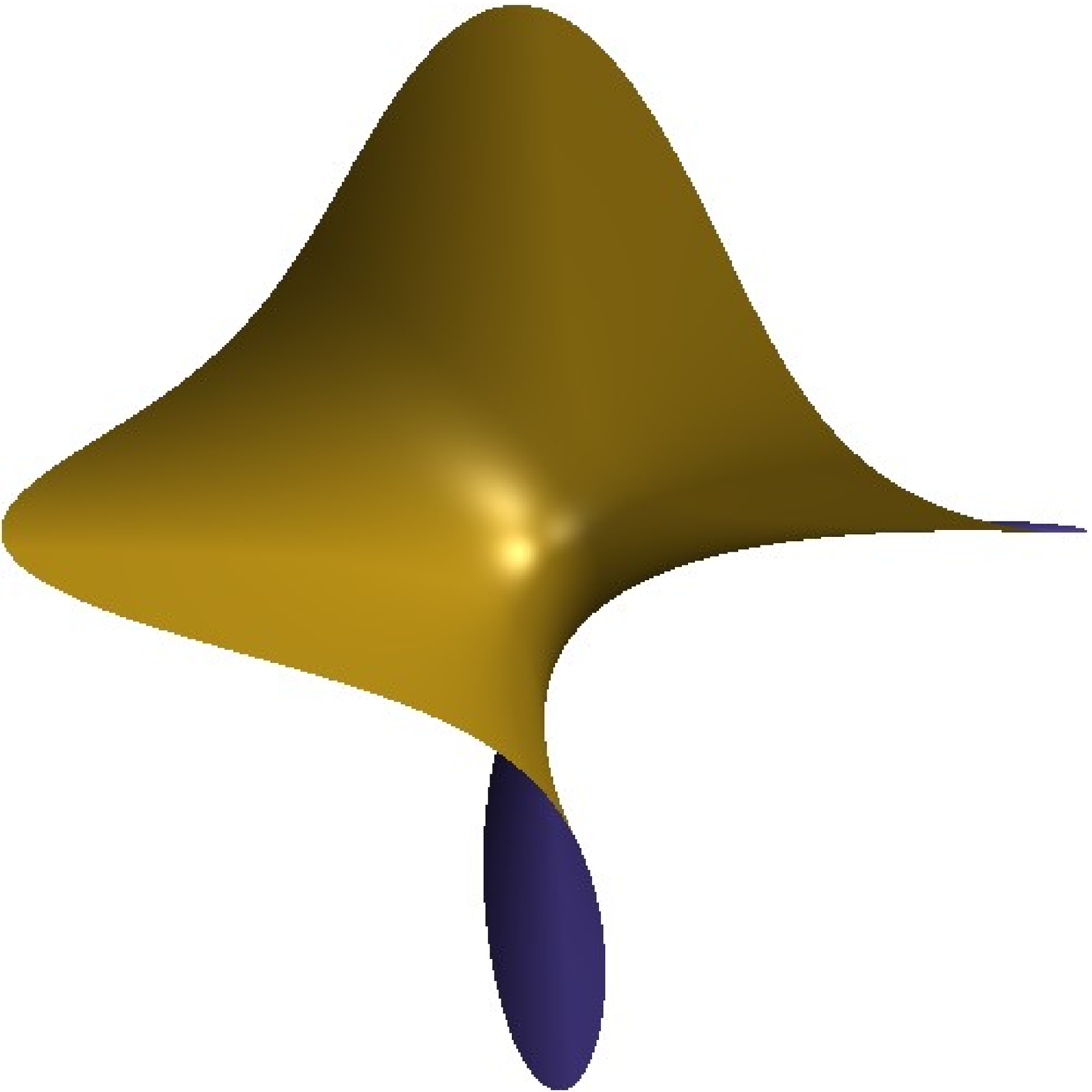}
  \hfill
  (b)
  \includegraphics[width=5cm]{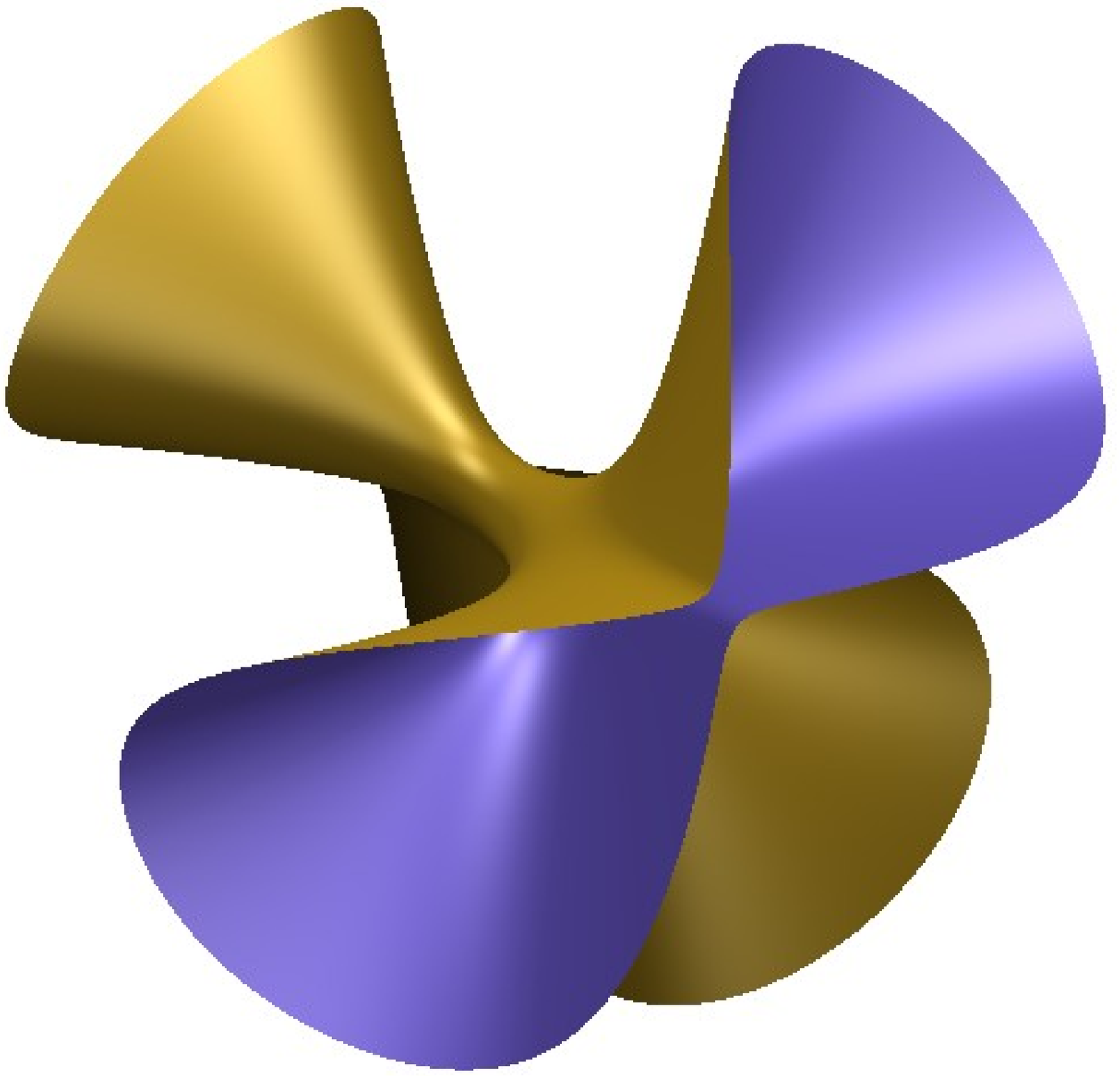}
  \caption{(a) Surface given by  $1\ind 2|3$, that is,
    $r_{12}r_{13}-r_{23}=0$. (b) Surface defined by
    $r_{13}r_{14}r_{34}-r_{14}^2-r_{34}^2+1=0$.  It arises for a component of
    $V_\cor(\mathcal{L}_{24})=V_\cor(\{12,23|14,24|3\})$.}
  \label{fig:comps24-28}
\end{figure}

\subsection{Singular points}
\label{subsec:singpoints}

Suppose $V$ is an algebraic variety in the space
$\mathbb{S}_{m,1}(\CCC)$ of complex symmetric $m\times m$ matrices
with ones on the diagonal.  Let $\mathcal{I}(V)$ be the ideal of
polynomials vanishing on $V$.  Choose $\{f_1,f_2,\ldots, f_{\ell}\}
\subset \RRR[\mathbf{r}]$ to be a finite generating set of
$\mathcal{I}(V)$, and define $J(\mathbf{r})$ to be the $\ell \times
\binom{m}{2}$ Jacobian matrix with $(k,ij)$ entry equal to $\partial
f_k(\mathbf{r})/\partial r_{ij}$.  It can be shown that the maximum
rank the Jacobian matrix achieves over $V$ is equal to
$\codim(V)=\binom{m}{2}-\dim(V)$ and, in particular, independent of
the choice of the generating set $\{f_1,f_2,\ldots, f_{\ell}\}$.  See
for instance \cite[\S3]{benedetti:1990} for a proof of this fact as we
as Lemma~\ref{lem:reg-points}, below.

\begin{definition}
  \label{def:singular-point}
  If the variety $V \subseteq \mathbb{S}_{m,1}(\CCC)$ is irreducible
  then a matrix $R=(r_{ij})\in V$ is a {\em singular point} if the
  rank of $J(R)$ is smaller than $\codim(V)$.  If $V$ is not
  irreducible, then the singular points are the singular points of the
  irreducible components of $V$ together with the points in the
  intersection of any two  irreducible components.
\end{definition}

When presented with a set of correlation matrices
$V_\cor(\mathcal{L})$ arising from a CI relation $\mathcal{L}$, it is
useful to study the singularities of the variety
$V_\CCC(I_\mathcal{L})$.

\begin{lemma}
  \label{lem:reg-points}
  The set of all points in $V_\cor(\mathcal{L})$ that are non-singular
  points of $V_\CCC(I_\mathcal{L})$ is a smooth manifold.
\end{lemma}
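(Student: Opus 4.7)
The plan is to apply the real implicit function theorem locally at each non-singular point $R_0\in V_\cor(\mathcal{L})$ of the complex variety $V=V_\CCC(I_\mathcal{L})$, and then patch the resulting charts together.

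First I would observe that a non-singular point $R_0$ lies on a unique irreducible component $W$ of $V$: by Definition~\ref{def:singular-point}, non-singular points are excluded from the pairwise intersections of the components. Because $V$ is cut out by the real polynomials $\det(R_{iC,jC})$ with $ij|C\in\mathcal{L}$, complex conjugation permutes the irreducible components of $V$; since $R_0$ is real and lies on exactly one of them, $W$ must be stable under conjugation and hence is defined over $\RRR$. Consequently, the prime ideal $\mathcal{I}(W)$ admits a finite generating set $h_1,\dots,h_\ell\in\RRR[\mathbf r]$.

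Next, by the Jacobian criterion cited from \cite[\S3]{benedetti:1990} just before the lemma, non-singularity of $R_0$ means that the Jacobian $J_h(R_0)=(\partial h_k/\partial r_{ij}(R_0))$ has rank $\codim W$ over $\CCC$. Because the $h_k$ have real coefficients and $R_0$ is a real point, the real rank of $J_h(R_0)$ agrees with the complex rank. Applying the real implicit function theorem to an invertible $\codim W\times \codim W$ submatrix of $J_h(R_0)$ presents the real locus $W(\RRR)$, in some Euclidean neighborhood of $R_0$, as a smooth real submanifold of $\RRR^{\binom{m}{2}}$ of dimension $\binom{m}{2}-\codim W$.

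Finally, let $U\subset\mathbb{S}_{m,1}(\CCC)$ be the Zariski-open complement of the union of those irreducible components of $V$ other than $W$. Then $V\cap U=W\cap U$, so in this neighborhood $V_\cor(\mathcal{L})=W(\RRR)\cap U\cap\PD_{m,1}$. Since both $U$ and $\PD_{m,1}$ are open in the Euclidean topology on $\RRR^{\binom{m}{2}}$, this is an open subset of the smooth submanifold produced above, hence itself smooth. Patching these local manifolds across all non-singular points of $V_\cor(\mathcal{L})$ yields the lemma. The only subtle step is arguing that $W$ is defined over $\RRR$ so that real generators exist and the Jacobian rank over $\RRR$ equals that over $\CCC$; everything else is a routine application of the implicit function theorem.
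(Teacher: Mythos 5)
Your argument is correct and is essentially the standard one: the paper gives no proof of this lemma, deferring instead to \cite[\S3]{benedetti:1990}, and your chain of reasoning (unique component through a non-singular point, conjugation-stability making that component defined over $\RRR$, equality of real and complex Jacobian rank, and the implicit function theorem) is exactly the argument found there. The only point you gloss over is that the real zero set of the chosen $\codim W$ equations coincides locally with $W(\RRR)$ rather than merely containing it, but this follows from the holomorphic implicit function theorem together with local irreducibility of complex manifold germs, so the proof stands.
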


A computational approach to the smoothness problem is thus to
calculate the locus of singular points of $V_\CCC(I_\mathcal{L})$,
using for instance the available routines in {\tt Singular}.  To
determine irrelevant components that do not intersect the set of
correlation matrices $\PD_{m,1}$, we saturate the ideal
$S_\mathcal{L}$ describing this singular locus on the product of
principal minors $D$ and then compute a primary decomposition of
$(S_\mathcal{L}:D^\infty)$.  If the singular locus is seen not to
intersect $\PD_{m,1}$ then the computation proves that
$V_\cor(\mathcal{L})$ is a smooth manifold.  If, however, there are
correlation matrices that are singular points of
$V_\CCC(I_\mathcal{L})$, then we may not yet conclude that
$V_\cor(\mathcal{L})$ is non-smooth around these points.  An algebraic
obstacle is the fact that $I_\mathcal{L}$ might differ from the
vanishing ideal $\mathcal{I}(V_\cor(\mathcal{L}))$.  However, even if
$I_\mathcal{L}=\mathcal{I}(V_\cor(\mathcal{L})$, then algebraic
singularity of a point as specified in
Definition~\ref{def:singular-point} need not imply
that the positive definite set $V_\cor(\mathcal{L})$ fails to be a
smooth manifold in a neighborhood of this point.  For a classical example
of a real algebraic curve with this feature; see \cite[Example
3.3.12(b)]{bochnak:1998}.

On the three-element set $[m]=[3]$, and up to equivalence,
$\mathcal{L}=\{12,12|3\}$ is the only relation for which
$V_\cor(\mathcal{L})$ is not a smooth manifold.  The following
proposition explains the drop in rank of the Jacobian in a generalized
scenario.

\begin{proposition}
  \label{thm:ijcd}
  Let $f_1=\det(R_{iC_1,jC_1}),f_2=\det(R_{iC_2,jC_2})\in\RRR[\mathbf{r}]$
  be the two determinants encoding the relation
  $\mathcal{L}=\{ij|C_1,ij|C_2\}$ on $[m]$.  Let $J(R)$ be the $2\times
  \binom{m}{2}$ Jacobian matrix for $f_1,f_2$ evaluated at a correlation
  matrix $R$.  Then the maximal rank of $J(R)$ over $V_\cor(\mathcal L)$ is
  two but this rank drops to one exactly when $R$ satisfies the two
  conditional independence constraints
  \begin{equation}
    \label{eq:ijcd}
    i \ind j(C_1\bigtriangleup C_2)|(C_1\cap C_2) \quad\text{and}\quad
    j \ind i(C_1\bigtriangleup C_2)|(C_1\cap C_2).
  \end{equation}
  Here, $C_1\bigtriangleup C_2=(C_1\setminus C_2)\cup (C_2\setminus C_1)$
  is the symmetric difference.
\end{proposition}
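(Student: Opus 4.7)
The plan is to compute the two gradients explicitly as symmetric rank-two tensors built from the left and right null vectors of $R_{iC_s, jC_s}$, interpret those null vectors as conditional regression residuals, and then read off the rank drop from the interaction of the two tensors' supports.

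At any $R \in V_\cor(\mathcal L)$ the submatrix $R_{iC_s, jC_s}$ is singular, and its co-rank is exactly one: the $|C_s|$ columns indexed by $C_s$ are already linearly independent because $R_{C_s C_s}$ is positive definite, so the rank can only be $|C_s|$ or $|C_s|+1$. Hence the adjugate is a nonzero rank-one matrix, and a short computation gives
\[
\mathrm{adj}\bigl(R_{iC_s, jC_s}\bigr) \;=\; c_s\,\beta_s \alpha_s^{\top}, \qquad c_s \;=\; \det R_{C_s C_s} \;>\; 0,
\]
with left and right null vectors normalized by $(\alpha_s)_i = (\beta_s)_j = 1$. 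Solving the null-space equations yields $(\alpha_s)_{C_s} = -R_{C_s C_s}^{-1} r_{C_s, i}$ and $(\beta_s)_{C_s} = -R_{C_s C_s}^{-1} r_{C_s, j}$, so after extension by zero the vectors $\tilde\alpha_s, \tilde\beta_s \in \RRR^{[m]}$ are the coefficient vectors of the residuals of $X_i$ and $X_j$ after regression on $X_{C_s}$. Combining $d\det(M) = \mathrm{tr}(\mathrm{adj}(M)\, dM)$ with the symmetry of $R$ then produces
\[
\frac{\partial f_s}{\partial r_{kl}} \;=\; c_s\bigl(\tilde\alpha_{s,k}\tilde\beta_{s,l} + \tilde\alpha_{s,l}\tilde\beta_{s,k}\bigr), \qquad k < l,
\]
so each gradient is encoded by the symmetric matrix $A_s = \tilde\alpha_s \tilde\beta_s^{\top} + \tilde\beta_s \tilde\alpha_s^{\top}$ with its diagonal entries discarded. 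Because $(\tilde\alpha_s)_i = (\tilde\beta_s)_j = 1$ while $(\tilde\alpha_s)_j = (\tilde\beta_s)_i = 0$, we have $(A_s)_{ij} = 1$; each gradient is therefore nonzero, and the Jacobian rank over $V_\cor(\mathcal L)$ is either one or two.

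For the rank-drop characterization, write $C = C_1 \cap C_2$ and $D_t = C_t \setminus C_{3-t}$, so that the off-diagonal support of $A_t$ is contained in $(\{i,j\} \cup C \cup D_t)^{2}$, with $(A_1)_{ij} = (A_2)_{ij} = 1$. Linear dependence of $\nabla f_1$ and $\nabla f_2$ then forces $A_1 - A_2$ to be a diagonal matrix. For any $k \in D_1$ the index $k$ lies outside $\{i,j\} \cup C \cup D_2$, hence $(A_2)_{ik} = (A_2)_{jk} = 0$, so
\[
(A_1)_{ik} \;=\; \tilde\beta_{1,k} \;=\; 0, \qquad (A_1)_{jk} \;=\; \tilde\alpha_{1,k} \;=\; 0.
\]
Thus the $D_1$-blocks of $R_{C_1 C_1}^{-1} r_{C_1, i}$ and $R_{C_1 C_1}^{-1} r_{C_1, j}$ both vanish, which by a Schur-complement calculation is precisely the conjunction $i \ind D_1 \mid C$ and $j \ind D_1 \mid C$. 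The symmetric argument with $A_2$ yields $i \ind D_2 \mid C$ and $j \ind D_2 \mid C$. Combined with $ij\mid C_1 \in \mathcal L$, the contraction implication (\ref{c4}) then produces $i \ind j \mid C$, and altogether this is exactly (\ref{eq:ijcd}).

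The converse is immediate: under (\ref{eq:ijcd}) the regression of $X_i$ or $X_j$ on $X_{C_s}$ reduces to a regression on $X_C$, forcing $\tilde\alpha_1 = \tilde\alpha_2$ and $\tilde\beta_1 = \tilde\beta_2$ (each supported only on $\{i\} \cup C$ and $\{j\} \cup C$ respectively), so $A_1 = A_2$ and the two gradients are parallel. Maximum rank two is attained at any $R \in V_\cor(\mathcal L)$ for which some partial correlation $r_{k,j\cdot C}$ with $k \in D_1$ is nonzero, since then $\tilde\beta_{1,k} \neq 0$ and $(A_1)_{ik} \neq (A_2)_{ik} = 0$; such matrices certainly exist because $\mathcal L$ imposes only two polynomial equations. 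The main obstacle is the bookkeeping needed to translate the vanishing of the $D_1$- and $D_2$-blocks of the regression vectors into CI statements relative to the common conditioning set $C$, and this is what forces the symmetric difference $C_1 \bigtriangleup C_2$ to appear in (\ref{eq:ijcd}).
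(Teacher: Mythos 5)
Your proof is correct, and it takes a genuinely different route from the paper's. The paper computes every partial derivative $\partial f_s/\partial r_{kl}$ as an explicit cofactor and then, case by case ($r_{ic}$, $r_{cc'}$, $r_{cf}$, $r_{if}$, $r_{ff'}$, \dots), uses Schur complement identities to show which cofactors must vanish for the rank to drop and which are proportional to $\partial f_s/\partial r_{ij}$ when (\ref{eq:ijcd}) holds. You instead observe that on $V_\cor(\mathcal{L})$ each matrix $R_{iC_s,jC_s}$ has corank exactly one, so its adjugate is the rank-one tensor $c_s\beta_s\alpha_s^{\top}$ built from the regression-residual coefficient vectors, and hence the entire gradient is $\nabla f_s = c_s\cdot\mathrm{offdiag}\bigl(\tilde\alpha_s\tilde\beta_s^{\top}+\tilde\beta_s\tilde\alpha_s^{\top}\bigr)$ in one stroke. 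The rank condition then collapses to the off-diagonal equality $A_1=A_2$ (using the common normalization $(A_s)_{ij}=1$), and reading off the columns indexed by $C_1\bigtriangleup C_2$ yields exactly the vanishing of the Schur complements (\ref{eq:ijcd2}) and (\ref{eq:ijcd4}), with (\ref{eq:ijcd1}) recovered by contraction from $ij|C_1\in\mathcal{L}$, just as in the paper. Your converse is a one-line consequence of $\tilde\alpha_1=\tilde\alpha_2$ and $\tilde\beta_1=\tilde\beta_2$, which replaces the longest portion of the paper's argument (the verification that all surviving derivatives are the stated multiples of the $r_{ij}$ column). What your approach buys is brevity and a transparent geometric meaning of the gradient as a symmetrized outer product of the two conditional residuals; what the paper's buys is complete elementarity, with every determinant identity displayed. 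Two small points you leave implicit: the passage from the five blockwise statements to the two statements in (\ref{eq:ijcd}) uses the Gaussian pairwise decomposition of Proposition~\ref{prop:cic}, and the claim that rank two is actually attained needs $C_1\neq C_2$ and a witness (the paper is equally terse on the latter); neither affects correctness.
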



\begin{proof}
  Let $F=C_1 \cap C_2$, $C=C_1\setminus C_2$ and $D=C_2\setminus C_1$.
  Then $C$, $D$ and $F$ are pairwise disjoint, and $f_1=\det(R_{iCF,jCF})$
  and $f_2=\det(R_{iDF,jDF})$.

  Before turning to the study of the Jacobian $J(R)$, we note that, by
  Proposition \ref{prop:cic}, the condition (\ref{eq:ijcd}) is equivalent
  to the vanishing of five Schur complements:
  \begin{align}
    \label{eq:ijcd1}
    r_{ij} - R_{i,F}R_{F,F}^{-1}R_{F,j} & = 0, \\ 
    \label{eq:ijcd2}
    R_{C,i} - R_{C,F}R_{F,F}^{-1}R_{F,i} & =  0, &
    R_{C,j} - R_{C,F}R_{F,F}^{-1}R_{F,j} & =  0, \\ 
    \label{eq:ijcd4}
    R_{D,i} - R_{D,F}R_{F,F}^{-1}R_{F,i}& =  0, &
    R_{D,j} - R_{D,F}R_{F,F}^{-1}R_{F,j}& = 0.
  \end{align}
  Below we sometimes use the following shorthand for such
  Schur complements:
  \[
  R_{A,B|F} := R_{A,B} - R_{A,F}R_{F,F}^{-1}R_{F,B}.
  \]

  Depending on whether or not $r_{kl}$ is a `symmetric' entry of the matrix
  defining $f_h$, the partial derivative $\partial f_h/\partial r_{kl}$ is
  equal to the $(k,l)$ cofactor or the sum of the $(k,l)$ and $(l,k)$
  cofactors.  When discussing these derivatives we always suppress
  the signs that appear when calculating a cofactor or switching two
  columns in a determinant.  It is easy to check that these signs do not
  affect the proof.  When writing out cofactors we use the notation
  $|\cdot|=\det(\cdot)$.
  
  The column of $J(R)$ associated with $r_{ij}$ contains two non-zero
  entries because
  \begin{align}
    \label{eq:ijCD-col1}
    \frac{\partial f_1}{\partial r_{ij}} &= |R_{CF,CF}|, &
    \frac{\partial f_2}{\partial r_{ij}} &= |R_{DF,DF}|, &
  \end{align}
  are two principal minors of $R$.  Hence, $J(R)$ has rank either one
  or two.

  
  {\em Necessity of (\ref{eq:ijcd}):} The correlations $r_{ic}$ with $c\in
  C$ do not appear in $f_2$.  Therefore, for the rank of $J(R)$ to drop to
  one, it is necessary that $\partial f_1/\partial r_{ic}=0$ for all $c \in
  C$.  This derivative is equal to
  \begin{equation}
    \label{eq:f1-ic}
    \frac{\partial f_1}{\partial r_{ic}}  = 
     \begin{vmatrix}
        R_{C,j} & R_{C,C\setminus c} & R_{C,F} \\
        R_{F,j} & R_{F,C\setminus c} & R_{FF}
      \end{vmatrix}  =  \left|R_{FF}\right|\left| R_{C,jC\setminus c} -
      R_{C,F}R_{F,F}^{-1}R_{F,jC\setminus c}\right|.
  \end{equation}
  (Note that, due to our convention of not distinguishing indices and
  singleton index set, $jC\setminus c = (C\cup\{j\})\setminus\{c\}$.)  The
  matrix $R_{C,jC\setminus c} - R_{C,F}R_{F,F}^{-1}R_{F,jC\setminus c}$ is
  obtained by replacing the $c$-th column of $R_{C,C|F}$
  by $R_{C,j} - R_{C,F}R_{F,F}^{-1}R_{F,j}$. Since 
  $R_{C,C|F}$ is positive definite, and the last
  determinant in (\ref{eq:f1-ic}) is zero for all $c\in C$, it follows that
  $R_{C,j} - R_{C,F}R_{F,F}^{-1}R_{F,j}=0$.  In other words,
  the second equation in (\ref{eq:ijcd2}) holds.   
  
  Similarly, the rank of $J(R)$ can only be one if $\partial
  f_1/\partial r_{jc}=0$ for all $c\in C$.  This implies the first
  equation in (\ref{eq:ijcd2}).  Treating $f_2$ analogously,
  (\ref{eq:ijcd4}) 
  also needs to hold.
  
  The remaining condition, (\ref{eq:ijcd1}), is a consequence of the matrix
  $R$ being in $V_\cor(\mathcal{L})$.  In the current amended notation, the
  first defining CI couple is $ij|CF$.  By iterated conditioning (iterated
  Schur complements), this conditional independence holds if and only if
  the determinant of the conditional covariance matrix
   \begin{equation}
    \label{eq:covgivenF}
    R_{iC,jC|F}   =   \begin{bmatrix}
        R_{i,j|F} & R_{i,C|F} \\
        R_{C,j|F} & R_{C,C|F}
    \end{bmatrix} =
    \begin{bmatrix}
        R_{i,j|F} & 0 \\
        0 & R_{C,C|F}
    \end{bmatrix}
  \end{equation}
  is zero.  It follows that
  $R_{i,j|F}=r_{ij}-R_{i,F}R_{F,F}^{-1}R_{F,j}=0$, which is
  (\ref{eq:ijcd1}).
  
  {\em Sufficiency of (\ref{eq:ijcd}):} If
  (\ref{eq:ijcd1})-(\ref{eq:ijcd4}) hold, many partial derivatives are
  zero.  First, (\ref{eq:f1-ic}) implies $\partial f_1/\partial
  r_{ic}=\partial f_1/\partial r_{jc}=\partial f_2/\partial
  r_{id}=\partial f_2/\partial r_{jd}=0$ for $c\in C$ and
  $d\in D$.
  
  Second, consider two distinct indices $c,c'\in C$.  The derivative
  $\partial f_1 /\partial r_{cc'}$ is the sum of two cofactors. The first
  cofactor is
  \begin{align*}
    \begin{vmatrix}
      R_{iC\setminus c, jC\setminus c'} &
      R_{iC\setminus c, F} \\
      R_{F,jC\setminus c'} & R_{F,F}
    \end{vmatrix} = \left|R_{F,F}\right|
    \begin{vmatrix}
      R_{i,j|F} & R_{i,C \setminus c'|F}  \\
      R_{C\setminus c,j|F} & R_{C\setminus c, C\setminus c'|F}
    \end{vmatrix}    =0,
  \end{align*}
  because, by (\ref{eq:ijcd1}) and (\ref{eq:ijcd2}), the
  last determinant is that of a matrix with first row and column zero.
  The other cofactor is obtained by switching $c$ and $c'$ and also zero.
  Hence, $\partial f_1/\partial r_{cc'}=0$.  Similarly, $\partial
  f_2/\partial r_{dd'}=0$ for two distinct indices $d,d'\in D$.
 
  Third, if $c\in C$ and $f\in F$, then $\partial f_1/\partial r_{cf}$
  is the sum of two cofactors.  Using (\ref{eq:ijcd1}) and
  (\ref{eq:ijcd2}), one cofactor is seen to be
      \begin{align*}
        \begin{vmatrix}
          R_{i,C} & R_{i,F\setminus f} & r_{ij} \\
          R_{C\setminus c, C} & R_{C\setminus c, F\setminus f} &
          R_{C\setminus c,j} \\ 
          R_{F,C} & R_{F,F\setminus f} & R_{F,j}
          \end{vmatrix}
        =
        \begin{vmatrix}
            R_{i,F}R_{F,F}^{-1}R_{F,C} & R_{i,F\setminus f} &
            R_{i,F}R_{F,F}^{-1}R_{F,j}  \\ 
            R_{C\setminus c, C} & R_{C\setminus c, F\setminus f} &
            R_{C\setminus c,F}R_{F,F}^{-1}R_{F,j}  \\
            R_{F,C} & R_{F,F\setminus f} & R_{F,F}R_{F,F}^{-1}R_{F,j}
          \end{vmatrix} .
      \end{align*}
      Let $R_{F,F}^{-1}R_{F,j}(f)$ be the $f$-th entry of the vector
      $R_{F,F}^{-1}R_{F,j}$.  The above cofactor is
      \begin{align*}
        R_{F,F}^{-1}R_{F,j}(f) \,\cdot\, \begin{vmatrix}
          R_{i,F}R_{F,F}^{-1}R_{F,C} & R_{i,F}R_{F,F}^{-1}R_{F,F}  \\
          R_{C\setminus c, C} & R_{C\setminus c, F}\\
          R_{F,C} & R_{F,F}
          \end{vmatrix}  = 0
      \end{align*}
      because $(R_{i,F}R_{F,F}^{-1}R_{F,C},\,R_{i,F}R_{F,F}^{-1}R_{F,F})$
      is a linear combination of rows of the matrix $(R_{F,C},\, R_{F,F})$.
      Similarly, the other cofactor is zero and, thus, $\partial
      f_1/\partial r_{cf} = 0$.  The vanishing of $\partial f_2/\partial
      r_{df}$ for $d\in D$ is analogous.
  
      Our calculations show that only the columns of $J(R)$ associated with
      $r_{ij}$, $r_{if}$, $r_{jf}$ and $r_{ff'}$ for $f\neq f'\in F$ may be
      non-zero.  To establish that $J(R)$ has rank one we show that these
      columns are all multiples of the one for $r_{ij}$ given in
      (\ref{eq:ijCD-col1}).
      
      Using the second equation in (\ref{eq:ijcd2}), we have that
      \begin{align*}
        \frac{\partial f_1}{\partial r_{if}} & =  \begin{vmatrix}
            R_{C,C} & R_{C,F\setminus f} & R_{C,j} \\
            R_{F,C} & R_{F,F\setminus f} & R_{Fj}
          \end{vmatrix} =
          \begin{vmatrix}
              R_{C,C} & R_{C,F\setminus f} & R_{C,F}R_{F,F}^{-1}R_{F,j} \\
              R_{F,C} & R_{F,F\setminus f} & R_{F,F}R_{F,F}^{-1}R_{Fj}
          \end{vmatrix}.
      \end{align*}
      Therefore, we obtain that 
      \begin{align*}
        \frac{\partial f_1}{\partial r_{if}} &= 
        R_{F,F}^{-1}R_{F,j}(f)\,\cdot\, \begin{vmatrix} 
          R_{C,C} & R_{C,F} \\
          R_{F,C} & R_{F,F}
        \end{vmatrix} =
        R_{F,F}^{-1}R_{F,j}(f)\, \frac{\partial
          f_1}{\partial r_{ij}}.
      \end{align*}
      The derivatives $\partial f_1/\partial r_{jf}$, $\partial
      f_2/\partial r_{if}$ and $\partial f_2/\partial r_{jf}$ are
      similar multiples of the corresponding derivatives with respect
      to $r_{ij}$.
      
      The two remaining cases $\partial f_1/\partial r_{ff'}$ and $\partial
      f_2/\partial r_{ff'}$ are again analogous, and we only consider the
      former.  This derivative is the sum of two cofactors, one being
      \begin{align*}
        \begin{vmatrix}
          R_{i,C} & R_{i,F\setminus f'} & r_{ij} \\
          R_{C, C} & R_{C, F\setminus f'} & R_{C,j} \\
          R_{F\setminus f,C} & R_{F\setminus f, F\setminus f'} &
          R_{F\setminus f,j}
        \end{vmatrix} =
        \begin{vmatrix}
          R_{i,C} & R_{i,F\setminus f'}  & R_{i,F}R_{F,F}^{-1}R_{F,j}  \\
          R_{C, C} & R_{C, F\setminus f'} & R_{C,F}R_{F,F}^{-1}R_{F,j}  \\
          R_{F\setminus f,C} & R_{F\setminus f, F\setminus f'} &
          R_{F\setminus f,F}R_{F,F}^{-1}R_{F,j}
        \end{vmatrix} 
      \end{align*}
      where (\ref{eq:ijcd1}) and the second equation in
      (\ref{eq:ijcd2}) were applied.  Using the first equation in
      (\ref{eq:ijcd2}), the cofactor is seen to be equal to
      \begin{align*}
        R_{F,F}^{-1}R_{F,j}(f') & \,\cdot\, 
        \begin{vmatrix}
          R_{i,F}R_{F,F}^{-1}R_{F,C} & R_{F,F}R_{F,F}^{-1}R_{i,F} \\
          R_{C, C} & R_{C, F} \\
          R_{F\setminus f,C} & R_{F\setminus f, F}
        \end{vmatrix}
        \\
        &=
        R_{F,F}^{-1}R_{F,j}(f')\,\cdot\,
        R_{F,F}^{-1}R_{F,i}(f)\,\cdot\,
        \begin{vmatrix} 
            R_{C, C} & R_{C, F} \\
            R_{F,C} & R_{F, F}
          \end{vmatrix} 
      \end{align*}
      The other cofactor is obtained by switching $f$ and $f'$ and, thus,
      \begin{align*}
        \frac{\partial f_1}{\partial r_{ff'}} & =
        \left[R_{F,F}^{-1}R_{F,j}(f')\,\cdot\,
          R_{F,F}^{-1}R_{F,i}(f) + 
          R_{F,F}^{-1}R_{F,j}(f)\,\cdot\,
          R_{F,F}^{-1}R_{F,i}(f')\right]\,
        \frac{\partial f_1}{\partial r_{ij}}.
      \end{align*}
      We have thus proven that the rank of $J(R)$ is 
      one when (\ref{eq:ijcd}) holds.
\end{proof}

\subsection{Singular loci of representable models on four variables}
\label{subsec:sing-loci}

Implementing the computational approach described in
Section~\ref{subsec:singpoints}, we find the following result for $m=4$ random
variables.  Note that Proposition~\ref{thm:ijcd} applies to the
representable relations with index 29 and 32.

\begin{theorem}
  \label{thm:sing-repr}
  If $\mathcal{L}$ is a representable relation on $[m]=[4]$, then
  $V_\cor(\mathcal{L})$ is a smooth manifold unless $\mathcal{L}$ is
  equivalent to one of 12 relations $\mathcal{L}_i$ with index
  $i\in\{14, 15, 20, 24,28,29,30, 32, 36,37, 46, 51 \}$ listed in
  Table~\ref{tab:rep}.
\end{theorem}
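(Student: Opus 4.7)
The plan is to execute the computational strategy described in Section~\ref{subsec:singpoints} case-by-case for each of the 53 equivalence classes of representable relations on $[m]=[4]$ indexed in Table~\ref{tab:rep}. For any such $\mathcal{L}$ the remark preceding Proposition~\ref{prop:irred-decomp-repr} gives $I_\mathcal{L}=I_\mathcal{L}:D^\infty$, so the CI ideal already encodes the algebraic structure relevant over $\PD_{4,1}$. By Proposition~\ref{prop:irred-decomp-repr}, $I_\mathcal{L}$ is prime except in the four cases $\mathcal{L}_{15}$, $\mathcal{L}_{24}$, $\mathcal{L}_{28}$, $\mathcal{L}_{37}$, whose primary decompositions are already recorded in Examples~\ref{ex:pd-15}, \ref{ex:pd-24}, \ref{ex:pd-28} and~\ref{ex:pd-37}.

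For each representable $\mathcal{L}$ I would form in \texttt{Singular} the Jacobian ideal $S_\mathcal{L}$ that defines the singular locus of $V_\CCC(I_\mathcal{L})$ in the sense of Definition~\ref{def:singular-point}, saturate on the product $D$ of principal minors to discard components that cannot meet the positive definite cone, and then compute a primary decomposition of $(S_\mathcal{L}:D^\infty)$. When no prime component of this decomposition admits a real positive definite point, Lemma~\ref{lem:reg-points} implies that $V_\cor(\mathcal{L})$ is a smooth manifold. Expected output: this disposes of the $53-12=41$ relations not on the exceptional list.

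For the 12 relations on the list one must exhibit genuine non-smoothness of the real set $V_\cor(\mathcal{L})$, not merely an algebraic singularity of the complex variety. The four reducible relations $\mathcal{L}_{15},\mathcal{L}_{24},\mathcal{L}_{28},\mathcal{L}_{37}$ are handled by their explicit primary decompositions: in each case two distinct irreducible components intersect inside $\PD_{4,1}$ (for instance at the identity matrix for $\mathcal{L}_{15}$ and $\mathcal{L}_{37}$), so $V_\cor(\mathcal{L})$ locally decomposes as a union of two smooth pieces of the same dimension meeting along a lower-dimensional stratum and cannot be a manifold. The relations $\mathcal{L}_{29}$ and $\mathcal{L}_{32}$ have the shape $\{ij|C_1,ij|C_2\}$ covered by Proposition~\ref{thm:ijcd}, and the condition~(\ref{eq:ijcd}) produces explicit positive definite correlation matrices at which the Jacobian rank drops. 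The remaining six cases are treated by reading from the primary decomposition of $(S_\mathcal{L}:D^\infty)$ a prime component that meets $\PD_{4,1}$, producing a rational witness correlation matrix, and checking that the tangent space at the witness has strictly larger dimension than $V_\cor(\mathcal{L})$ so that local smoothness fails.

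The main obstacle is precisely this last transfer from the complex-algebraic output of \texttt{Singular} to a statement about the real semialgebraic set $V_\cor(\mathcal{L})\subset\PD_{4,1}$; as cautioned after Lemma~\ref{lem:reg-points}, a complex singular point on a variety cut out by $I_\mathcal{L}$ need not produce a non-smooth point of its positive definite locus. In the reducible cases this is resolved by the local union-of-manifolds description, while in the remaining irreducible exceptional cases it is resolved either by the explicit rank-drop criterion of Proposition~\ref{thm:ijcd} or by a direct tangent-cone computation at the witness matrix. Once this framework is in place, each individual verification is routine, but the disciplined case-by-case bookkeeping across all 53 representable relations is where the actual work lies.
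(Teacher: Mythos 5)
Your overall strategy matches the paper's: the $41$ smooth cases are certified exactly as you describe (singular-locus ideal of $V_\CCC(I_\mathcal{L})$, saturation on the product $D$ of principal minors, primary decomposition, Lemma~\ref{lem:reg-points}), and the four reducible cases $\mathcal{L}_{15},\mathcal{L}_{24},\mathcal{L}_{28},\mathcal{L}_{37}$ are correctly handled by the intersecting-components argument. The genuine gap is in the eight irreducible cases $i\in\{14,20,29,30,32,36,46,51\}$. There you propose to conclude non-smoothness by ``checking that the tangent space at the witness has strictly larger dimension than $V_\cor(\mathcal{L})$,'' i.e.\ from the drop in Jacobian rank. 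But that is precisely the inference the paper warns against after Lemma~\ref{lem:reg-points} (and which you yourself identify as the main obstacle): an algebraic singularity of $V_\CCC(I_\mathcal{L})$ --- equivalently an enlarged Zariski tangent space --- need not make the real positive definite locus fail to be a smooth manifold; see \cite[Example 3.3.12(b)]{bochnak:1998}. The same objection applies to invoking Proposition~\ref{thm:ijcd} for $\mathcal{L}_{29}$ and $\mathcal{L}_{32}$: that proposition only locates the points where the Jacobian rank drops, it does not by itself establish non-smoothness there.

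The paper closes this gap with Theorem~\ref{thm:tcone-repr}. For each of the eight relations and each singular point $R_0$ it computes the cone $\mathit{AC}_{\mathcal{L}}(R_0)$, which always contains the tangent cone, and then proves the reverse inclusion $\mathit{AC}_{\mathcal{L}}(R_0)\subseteq \mathit{TC}_{\mathcal{L}}(R_0)$ by exhibiting explicit sequences $R_n\in V_\cor(\mathcal{L})$ with $n(R_n-R_0)$ converging to a generic algebraic tangent direction, as carried out for $i=29$ via (\ref{eq:tangdir29}). Since the resulting cones (\ref{eq:tangentcone1})--(\ref{eq:tangentcone6}) are quadric cones rather than linear spaces, $V_\cor(\mathcal{L})$ cannot be a smooth manifold near $R_0$. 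Your phrase ``a direct tangent-cone computation at the witness matrix'' gestures at this, but the essential content --- the lower bound on the tangent cone obtained by constructing curves inside the model, together with the observation that the cone is not a linear subspace --- is what actually proves non-smoothness, and it is missing from your plan.
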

\begin{proof}
  Going through 53 possible cases, the computation identifies 41 models as
  smooth according to Lemma~\ref{lem:reg-points}.  The remaining 12 models
  are algebraically singular.  Our analysis of tangent cones below shows
  that these 12 models are indeed not smooth manifolds (compare
  Theorem~\ref{thm:tcone-repr}).
\end{proof}

We now give some more details on the singularities of the 12 relations
listed in Theorem~\ref{thm:sing-repr}.  They can be grouped into 3
categories:

\bigskip {\it (a) Union of smooth components:} If $i\in\{24,28\}$ then
$V_\cor(\mathcal{L}_i)$ is the union of two components that are both
smooth manifolds; compare Examples~\ref{ex:pd-24} and \ref{ex:pd-28}.
In each case the singular locus is simply the intersection of the two
components, which gives the surface defined by $\langle
r_{12},r_{23},r_{24},r_{13}r_{14}r_{34}-r_{14}^2-r_{34}^2+1 \rangle$
and the circle defined by $
  \langle r_{13},r_{14},r_{23},r_{24},r_{12}^2+r_{34}^2-1 \rangle .$
  
  If $i\in\{15,37\}$ then $V_\cor(\mathcal{L}_i)$ is the union of four
  smooth components; compare Examples~\ref{ex:pd-15} and
  \ref{ex:pd-37}.  The singular locus is again obtained by forming
  intersections of components.  In each case the singular locus has 4
  components that for $i=15$ are given by
  \begin{align*}
    &\langle r_{12},r_{14},r_{23},r_{34},r_{13}-r_{24} \rangle, 
    &\langle r_{12},r_{14},r_{23},r_{34},r_{13}+r_{24} \rangle,\\
    &\langle r_{13},r_{14},r_{23},r_{24},r_{12}-r_{34} \rangle,
    &\langle r_{13},r_{14},r_{23},r_{24},r_{12}+r_{34} \rangle, 
  \end{align*}
  and for $i=37$ by
  \begin{align*}
    &\langle r_{12},r_{13},r_{24},r_{34},r_{14}-r_{23} \rangle,
    &\langle r_{12},r_{13},r_{24},r_{34},r_{14}+r_{23} \rangle, \\
    &\langle r_{12},r_{14},r_{23},r_{34},r_{13}-r_{24} \rangle,
    &\langle r_{12},r_{14},r_{23},r_{34},r_{13}+r_{24} \rangle.
  \end{align*}
  
  {\it (b) Singular at identity matrix:} The six models with $i\in\{14, 20,
  30, 36, 46, 51\}$, have the identity matrix as their only singular
  point.
  
  {\it (c) Singular at almost diagonal matrices:} Two cases remain.
  If $i=29$, the correlation matrices that are singularities have the
  entries other than $r_{14}$ equal to zero.  For $i=32$, the
  singularities have the entries other than $r_{34}$ equal to zero.

\bigskip

Since algebraic singularity need not imply failure of smoothness, we now
study the local geometry of the sets $V_\cor(\mathcal{L})$ at their
algebraic singularities.  This local geometry is represented by the tangent
cone, which is also related to asymptotic distribution theory for
statistical tests \cite{drton:lrt}.

\begin{definition}
  A {\em tangent direction\/} of $V_\cor(\mathcal{L})$ at the correlation
  matrix $R_0\in \PD_{m,1}$ is a matrix in $\RRR^{m\times m}$ that is the
  limit of a sequence $\alpha_n(R_n-R_0)$, where the $\alpha_n$ are
  positive reals and the $R_n \in V_\cor(\mathcal{L})$ converge to $R_0$.
  The {\em tangent cone\/} $\mathit{TC}_{\mathcal{L}}(R_0)$ is the closed
  cone made up of all these tangent directions.
\end{definition}

The representable relations $\mathcal{L}_i$ with $i\in\{15,24,28,37\}$
define unions of smooth manifolds.  Their singularities lie in the
intersection of two or more of the smooth components, and the tangent cone
is then simply the union of the tangent spaces of the smooth components
containing a considered singularity.

Our strategy to determine the tangent cones of the remaining 8 singular
representable models is again algebraic.  Let the correlation matrix
$R_0\in\PD_{m,1}$ correspond to a root of the polynomial
$f\in\RRR[\mathbf{r}]$.  Write
\begin{equation*}
  f(R) \, = \,\sum_{h=l}^L f_h(R - R_0 )
\end{equation*}
as a sum of homogeneous polynomials $f_h$ in $R-R_0$, where $f_h(t)$
has degree $h$ and $f_l\not=0$.  Since $f(R_0)=0$, the minimal degree
$l$ is at least one, and we define $f_{R_0,\min}=f_l$.  The {\em
  algebraic tangent cone\/} of $V_\cor(\mathcal{L})$ at $R_0$ is the
real algebraic variety defined by the {\em tangent cone ideal\/}
\begin{equation}
  \label{eq:ac-ideal}
  \left\{f_{R_0,\min} \;:\;
      f\in\mathcal{I}(V_\cor(\mathcal{L}))
    \right\}\subset\RRR[\mathbf{r}].
\end{equation}
The algebraic tangent cone contains the tangent cone
$\mathit{TC}_{\mathcal{L}}(R_0)$; see e.g.~\cite[\S2.3]{drton:2009}.  In
our setup we work with the ideal
$I_\mathcal{L}\subseteq\mathcal{I}(V_\cor(\mathcal{L}))$ and, thus, consider
the cone $\mathit{AC}_{\mathcal{L}}(R_0)$ given by the real algebraic
variety of the ideal
\begin{equation}
  \label{eq:acL-ideal}
  C_\mathcal{L}(R_0)=
  \left\{f_{R_0,\min} \;:\;
      f\in I_\mathcal{L}
    \right\}\subset\RRR[\mathbf{r}].
\end{equation}
The cone $\mathit{AC}_{\mathcal{L}}(R_0)$ always contains the algebraic
tangent cone. Therefore, we still have the inclusion
$\mathit{TC}_{\mathcal{L}}(R_0)\subseteq\mathit{AC}_{\mathcal{L}}(R_0)$.
The ideal $C_\mathcal{L}(R_0)$ in (\ref{eq:acL-ideal}) can be computed
using Gr\"obner basis methods that are implemented, for instance, in the
{\tt tangentcone} command in {\tt Singular}.

\begin{theorem}
  \label{thm:tcone-repr}
  If $\mathcal{L}_i$ is one of the 8 representable relations on $[m]=[4]$
  with index $i\in\{14, 20,29,30, 32, 36, 46, 51 \}$, then at all
  singularities $R_0$ of $V_\cor(\mathcal{L}_i)$ the tangent cone
  $\mathit{TC}_{\mathcal{L}}(R_0)$ is equal to the algebraically defined
  cone $\mathit{AC}_{\mathcal{L}}(R_0)$.  In particular, the models
  $V_\cor(\mathcal{L}_i)$ are indeed non-smooth.
\end{theorem}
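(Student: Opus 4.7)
Since $\mathit{TC}_{\mathcal{L}_i}(R_0)\subseteq \mathit{AC}_{\mathcal{L}_i}(R_0)$ holds in general, what has to be proven is the reverse inclusion. The non-smoothness assertion will then follow, because in each of the 8 cases a short inspection shows that $\mathit{AC}_{\mathcal{L}_i}(R_0)$ is not a linear subspace of the ambient tangent space, while the tangent cone of a smooth manifold at any of its points is necessarily linear.

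My plan is a case-by-case analysis carried out in \texttt{Singular}. For each relation $\mathcal{L}_i$ with $i\in\{14,20,29,30,32,36,46,51\}$ and each singular point $R_0$ in the singular locus identified in Section~\ref{subsec:sing-loci}, I first compute a generating set of the ideal $C_{\mathcal{L}_i}(R_0)$ appearing in (\ref{eq:acL-ideal}) using the \texttt{tangentcone} command, and then run a primary decomposition to expose the irreducible components of $\mathit{AC}_{\mathcal{L}_i}(R_0)$. In every case the output is a short list of components, each being either a linear subspace or a low-dimensional quadric cone through the origin of the tangent space to $\mathbb{S}_{4,1}(\RRR)$ at $R_0$.

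To establish $\mathit{AC}_{\mathcal{L}_i}(R_0)\subseteq \mathit{TC}_{\mathcal{L}_i}(R_0)$ I would realize each irreducible component $W$ of $\mathit{AC}_{\mathcal{L}_i}(R_0)$ by an explicit analytic family in $V_\cor(\mathcal{L}_i)$. For $T\in W$, seek a curve $R(t)=R_0+tT+\sum_{k\ge 2}t^k E_k$ of symmetric matrices such that $\det(R(t)_{iC,jC})\equiv 0$ for every $ij|C\in\mathcal{L}_i$, with the higher-order terms $E_k$ determined recursively from the defining equations of $I_{\mathcal{L}_i}$. Positive definiteness for small $|t|$ is automatic since $\PD_{4,1}$ is an open neighborhood of $R_0$. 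In the six cases where $R_0$ equals the identity, the linear curve $R_0+tT$ itself already lies in $V_\cor(\mathcal{L}_i)$ for $T\in W$, because the defining almost-principal minors vanish identically along $T$; for $i\in\{29,32\}$, where $R_0$ carries exactly one nonzero off-diagonal entry, one fixes that entry at its $R_0$-value and perturbs the remaining coordinates, again obtaining such a family by elementary means.

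The main obstacle is to argue uniformly over $T\in W$ that the construction of the family does not stall at some order. In practice this is painless because each component produced by the primary decomposition comes with a simple polynomial parametrization; reinterpreting this parametrization as a family of symmetric matrices and checking directly that it lies in $V_\cor(\mathcal{L}_i)$ exhibits the required curves without any further recursion. Applied to each of the 8 representable relations and each of their singular points in turn, this yields the desired equality $\mathit{TC}_{\mathcal{L}_i}(R_0)=\mathit{AC}_{\mathcal{L}_i}(R_0)$, and the observation that these cones are not linear subspaces then rules out smoothness of $V_\cor(\mathcal{L}_i)$ at $R_0$.
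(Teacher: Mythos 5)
Your overall strategy coincides with the paper's: compute the cone ideals $C_{\mathcal{L}_i}(R_0)$, note that $\mathit{TC}_{\mathcal{L}_i}(R_0)\subseteq\mathit{AC}_{\mathcal{L}_i}(R_0)$ is automatic, and prove the reverse inclusion by exhibiting, for a dense set of directions in the algebraic cone, an explicit curve or sequence in $V_\cor(\mathcal{L}_i)$ converging to $R_0$ with the prescribed derivative; non-smoothness then follows because the cones are not linear. The paper carries this out in detail only for $i=29$ and declares the remaining seven cases similar.

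However, one concrete step in your argument fails: the claim that, in the six cases singular at the identity, the linear curve $\mathit{Id}+tT$ already lies in $V_\cor(\mathcal{L}_i)$ for $T$ in the algebraic tangent cone. Take $\mathcal{L}_{36}=\{12|3,\,12|4,\,34|1\}$, whose cone ideal is $\langle r_{12},r_{34},r_{13}r_{23}-r_{14}r_{24}\rangle$. Along $\mathit{Id}+tT$ with $t_{12}=t_{34}=0$ the couple $12|3$ evaluates to $r_{12}-r_{13}r_{23}=-t^2\,t_{13}t_{23}$, which is nonzero for a generic cone direction; the same failure occurs in all six identity cases because the CI polynomials are inhomogeneous. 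The coordinates killed by the linear part of the cone ideal must be reinstated at second order --- for $\mathcal{L}_{36}$ one needs $r_{12}=t^2t_{13}t_{23}$ and $r_{34}=t^2t_{13}t_{14}$ together with $t_{13}t_{23}=t_{14}t_{24}$ before the curve lies in the model. This is exactly the quadratic correction that the paper's sequence $\boldsymbol{r}_n$ builds in for $i=29$. A related caveat applies to your treatment of $i\in\{29,32\}$: freezing the single nonzero entry of $R_0$ (e.g.\ $r_{14}=\rho$ for $i=29$) only realizes tangent directions with $t_{14}=0$, whereas $C_{\mathcal{L}_{29}}(R_0)$ does not constrain $t_{14}$ at all, so that entry must be perturbed as well. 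Your own recursive scheme with higher-order terms $E_k$ (or a direct parametrization of each component) repairs both points, so the defect is local to these two shortcuts rather than to the method, but as written the proof asserts curves that do not lie in the model.
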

\begin{proof}
  The six models with $i\in\{14, 20, 30, 36, 46, 51\}$, have the
  identity matrix $\mathit{Id}$ as their only singular correlation
  matrix.  The cone ideals are
  \begin{align}
    \label{eq:tangentcone1}
    C_{\mathcal{L}_{14}}(\mathit{Id}) = C_{\mathcal{L}_{46}}(\mathit{Id})
    &= \langle 
    r_{14}, r_{23},  r_{12}r_{24}+r_{13}r_{34} \rangle,  \\
    \label{eq:tangentcone2}
    C_{\mathcal{L}_{20}}(\mathit{Id}) = C_{\mathcal{L}_{51}}(\mathit{Id})
    &= \langle 
    r_{14}, r_{23}, r_{12}r_{24}-r_{13}r_{34} \rangle ,\\
    \label{eq:tangentcone3}
    C_{\mathcal{L}_{30}}(\mathit{Id}) &=
    \langle r_{14}, r_{23}, r_{12}r_{13}-r_{24}r_{34} \rangle,\\
    \label{eq:tangentcone4}
    C_{\mathcal{L}_{36}}(\mathit{Id}) &= \langle r_{12}, r_{34},
    r_{13}r_{23}-r_{14}r_{24} \rangle.
  \end{align}
  The latter three ideals are equivalent under permutation of the indices
  in $[m]=[4]$.
  
  For $\mathcal{L}_{29}$, the singular points $R_0=(\rho_{ij}^0)$ have all
  off-diagonal entries zero except for possibly $\rho_{14}^0$ which can be
  any number in $(-1,1)$.  The cone ideal varies continuously with
  $\rho_{14}^0$:
  \begin{align}
    \label{eq:tangentcone5}
    C_{\mathcal{L}_{29}}(R_0) 
    &= \langle \;r_{23},\,
    r_{13}(r_{12} -\rho_{14}^0r_{24})+
    r_{34}(r_{24}-\rho_{14}^0 r_{12}) \;\rangle.
  \end{align}
  The algebraic cones in this family can be transformed into each other
  by an invertible linear transformation.
  
  For $\mathcal{L}_{32}$, the singular points $R_0=(\rho_{ij}^0)$ have all
  off-diagonal entries zero except for possibly $\rho_{34}^0$ which can be
  any number in  $(-1,1)$.  The cone ideal, however, does not depend on the
  value of $\rho_{34}^0$:
  \begin{equation}
    \label{eq:tangentcone6}
    C_{\mathcal{L}_{32}}(R_0) = \langle r_{12}, r_{13} r_{23} - r_{14}
    r_{24} \rangle.
  \end{equation}
  
  In each case, it can be shown that all vectors in
  $\mathit{AC}_{\mathcal{L}_i}(R_0)$ are indeed tangent directions for
  $V_\cor(\mathcal{L}_i)$. We prove the result for $i=29$; the other 7
  cases are similar.  

  \smallskip
   {\em Tangent cone of $V_\cor(\mathcal{L}_{29})$:}
  The ideal 
  \begin{equation*}
    I_{\mathcal{L}_{29}} = \langle r_{23}, -r_{14}^2
  r_{23}+r_{13} r_{14} r_{24}+r_{12} r_{14} r_{34}-r_{12}
  r_{13}-r_{24} r_{34}+r_{23} \rangle.
  \end{equation*}
  Let $\boldsymbol{r}_0=(0,0,\rho,0,0,0)$ with $|\rho|<1$ be a
  singular point and $R_0$ the corresponding correlation matrix.  Both
  $TC_{\mathcal{L}_{29}}(R_0)$ and $AC_{\mathcal{L}_{29}}(R_0)$ are
  closed sets, and we may thus consider a generic direction
  $\boldsymbol{t}=(t_{12},t_{13},t_{14},t_{23},t_{24},t_{34})$ in the
  cone $AC_{\mathcal{L}_{29}}(R_0)$ given by the ideal
  $C_{\mathcal{L}_{29}}(R_0)$ in (\ref{eq:tangentcone5}).
  We may assume $\rho t_{12} - t_{24} \neq 0$, and obtain
  \begin{equation}
    \label{eq:tangdir29}
    \boldsymbol{t}=\left(t_{12},t_{13},t_{14},0,t_{24},\frac{t_{13}(t_{12}-\rho
    t_{24})}{\rho t_{12} - t_{24}}\right).
  \end{equation}
  Let 
  $$\boldsymbol{r}_n = \left(
    \frac{t_{12}}{n},\frac{t_{13}}{n},\rho+\frac{t_{14}}{n},0,\frac{t_{24}}{n},
    \frac{nt_{13}(t_{12}- \rho t_{24}) - t_{13}t_{14}t_{24}}{n^2(\rho
      t_{12}-t_{24})+nt_{12}t_{14}} \right).$$
  It is easy to show that
  $\boldsymbol{r}_n \in V_\cor(\mathcal{L}_{29})$ for large $n$; and
  $\boldsymbol{r}_n \rightarrow \boldsymbol{r}_0$ and
  $n(\boldsymbol{r}_n-\boldsymbol{r}_0) \rightarrow \boldsymbol{t}$ as
  $n \rightarrow \infty$. Thus, $\boldsymbol{t} \in
  TC_{\mathcal{L}_{29}}(R_0)$, and it follows that
  $TC_{\mathcal{L}_{29}}(R_0)=AC_{\mathcal{L}_{29}}(R_0)$.
\end{proof}

\section{Conclusion}
\label{sec:conclusion}

We conclude by pointing out some interesting features of our
computational results for $m=4$.  First, the model associated with a
representable relation need not correspond to an irreducible variety.
It can be a union of several distinct irreducible components that all
intersect the cone of positive definite matrices; see
Examples~\ref{ex:pd-24} and \ref{ex:pd-28} in which the components all
have the same dimension.

Second, Examples~\ref{ex:pd-24} and \ref{ex:pd-28} also provide a
negative answer to Question 7.11 in \cite{drton:2009}.  This question
asked whether Gaussian conditional independence models that are smooth
locally at the identity matrix are smooth manifolds.  The singular
loci of these examples, however, do not contain the identity matrix.
All other singular models are singular at the identity matrix, and in
fact, the identity matrix is often the only singularity (recall
Section~\ref{subsec:sing-loci}).

Our final comment is based on the observation that Gaussian conditional
independence models for $m=3$ variables are smooth except for the model
given by $ij$ and $ij|k$, and that singular models can arise more generally
when combining two CI couples $ij|C$ and $ij|D$ (recall
Proposition~\ref{thm:ijcd}).  This observation may lead one to guess that
if a complete relation $\mathcal{L}$ does not contain two CI couples $ij|C$
and $ij|D$ that repeat the pair $ij$, then the model $V_\pd(\mathcal{L})$
is smooth.  Unfortunately, this is false, again because of
Examples~\ref{ex:pd-24} and \ref{ex:pd-28}.

\begin{appendix}

\section{Lists of relations and CI implications}
\label{sec:appendix}

In this appendix we provide encyclopedic information about conditional
independence of four Gaussian random variables.  This comprehends the
listing of all representable and complete relations, as well as all
Gaussian CI implications.

\subsection{Representable and complete relations}

Counting up to equivalence, there are 53 representable relations on
four variables.  They are listed in Table~\ref{tab:rep} below, where
the symbol $\ast$ is used to denote all possible conditioning sets.
With this convention, the symbol $12|\ast$, for instance, expands to
$\{12,12|3,12|4,12|34\}$.  In the table we indicate whether the model
is singular and give the equivalent dual of each representable
relation.  We include a permutation of the indices that provides the
equivalence.  Using cycle notation, an empty entry stands for the
identity.

The remaining $101-53=48$ equivalence classes of complete but
not representable relations are listed in Table \ref{tab:com}.
Each cell in the second column provides, row-by-row, the representable
relations in the minimal representable decomposition from
Theorem \ref{thm:repr-decomp-variety}.  Their intersection gives the
considered complete relation.  For each representable relation in the
table cell, we also provide, in the third column, the label of its
equivalent representable relation in Table \ref{tab:rep} and the
permutation that transforms the equivalent relation to the current
one.

In the introduction we mentioned that graphical models are smooth CI
models.  Over 4 nodes, there are 11 unlabelled undirected graphs.
Figure~\ref{fig:semigaussoids} from Section~\ref{subsec:all-models}
shows for each of the 10 non-empty graphs the corresponding
representable relation from Table~\ref{tab:rep}.  Up to equivalence,
there are 10 additional graphical models associated with acyclic
digraphs on 4 nodes.  The 10 digraphs are shown in
Figure~\ref{fig:dags} together with the corresponding representable
relations.  We remark that the representable relations
$\mathcal{L}_{35}$, $\mathcal{L}_{40}$, $\mathcal{L}_{44}$,
$\mathcal{L}_{48}$ and $\mathcal{L}_{49}$ determine graphical models
based on mixed graphs with directed and bi-directed edges.  The
corresponding 5 graphs are shown in \cite[Fig.~10]{richardson:2003}.
Two further representable relations correspond to chain graphs:
$\mathcal{L}_{13}$ is given by the so-called LWF interpretation of the
graph in \cite[Fig.~1]{andersson:2001} and $\mathcal{L}_{43}$ by the
AMP interpretation of the graph in \cite[Fig.~8(a)]{andersson:2001}.

\subsection{All CI implications for four variables}
\label{sec:cis}

Although not pointed out explicitly, \cite{matus:2007} have
proved the following result via Theorem \ref{thm:matus}.

\begin{theorem}
  For $m=4$ variables, all the Gaussian CI implications follow from the
  implications (\ref{c3})-(\ref{eq:moreci5}) and the weak transitivity
  property in (\ref{eq:weaktran}).
\end{theorem}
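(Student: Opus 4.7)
The plan is to deduce this result as a formal corollary of Theorem~\ref{thm:matus}. By Definition~\ref{def:CI-impl}, a Gaussian CI implication $\mathcal{L}_1 \Rightarrow \mathcal{L}_2$ holds precisely when $\mathcal{L}_2 \subseteq \mathcal{L}(V_\pd(\mathcal{L}_1))$, so it suffices to show that for every relation $\mathcal{L}_1 \subseteq \mathcal{R}(4)$, the set $\overline{\mathcal{L}_1}$ of couples derivable from $\mathcal{L}_1$ using the listed rules coincides with $\mathcal{L}(V_\pd(\mathcal{L}_1))$.

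Soundness, $\overline{\mathcal{L}_1} \subseteq \mathcal{L}(V_\pd(\mathcal{L}_1))$, is immediate from material already in the paper: the Horn-type implications (\ref{c3})--(\ref{eq:moreci5}) are valid Gaussian CI implications by the discussion around (\ref{c5}) and by Lemma~\ref{thm:moreci}, while weak transitivity (\ref{eq:weaktran}) is a pointwise property of matrices in $\PD_m$. Thus any matrix in $V_\pd(\mathcal{L}_1)$ automatically satisfies every couple produced by the derivation, interpreting weak transitivity as a case split along a disjunction that the matrix must obey.

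For completeness, I would organize derivations into a finite branching tree. Starting with $\mathcal{L}_1$, saturate under the deterministic rules (\ref{c3})--(\ref{eq:moreci5}); then, for each pair $\{ij|C,\,ij|kC\}$ in the current relation, branch into two children obtained by respectively adjoining $\{ik|C,\,ik|jC\}$ and $\{jk|C,\,jk|iC\}$, and repeat. Since $\mathcal{R}(4)$ is finite the process terminates, and every leaf is by construction a semigaussoid that is closed under (\ref{eq:moreci1})--(\ref{eq:moreci5}) and satisfies weak transitivity. By Theorem~\ref{thm:matus}, each leaf is either the saturated relation $\mathcal{R}(4)$ or a representable relation. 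A couple is derivable from $\mathcal{L}_1$ precisely when it lies in every leaf.

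The crux is then to identify this collection of leaves with the collection of representable relations containing $\mathcal{L}_1$. In the ``$\subseteq$'' direction, every leaf is a representable extension by Theorem~\ref{thm:matus}. In the ``$\supseteq$'' direction, given any representable $\mathcal{R} \supseteq \mathcal{L}_1$, one follows the tree: at every weak-transitivity branch, $\mathcal{R}$ already contains one of the two disjuncts by (\ref{eq:weaktran-rel}), so we descend along a branch that stays inside $\mathcal{R}$, eventually reaching a leaf contained in $\mathcal{R}$. Intersecting leaves thus produces the intersection of all representable relations containing $\mathcal{L}_1$, which by Theorem~\ref{thm:complete-intersection-represent} equals $\mathcal{L}(V_\pd(\mathcal{L}_1))$. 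The main obstacle is conceptual rather than computational: one must correctly formalize weak transitivity as a branching inference rule (not a Horn rule) in the derivation system, and then lean on Theorem~\ref{thm:matus}, the deep ingredient guaranteeing that no ``extra'' non-representable leaves escape the axiomatization on four variables.
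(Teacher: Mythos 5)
Your argument is correct and follows exactly the route the paper intends: the paper offers no written proof of this theorem, stating only that it is established ``via Theorem~\ref{thm:matus}'', and your derivation-tree construction together with the appeal to Theorem~\ref{thm:complete-intersection-represent} supplies precisely that deduction. In particular, identifying the intersection of the leaves with the intersection of all representable relations containing $\mathcal{L}_1$ --- hence with $\mathcal{L}(V_\pd(\mathcal{L}_1))$ --- is the right way to formalize ``follow from'' in the presence of the disjunctive weak-transitivity rule, and both inclusions of that identification are argued correctly.
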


Due to its disjunctive conclusion, the weak transitivity property is not a
CI implication in the sense of our strict Definition~\ref{def:CI-impl}.  A
natural problem is thus to find a set of CI implications in the sense of
this definition, from which all other such CI implications can be deduced.

Recall the last step of the search of complete relations in Section
\ref{sec:model}, which treats 94 semigaussoids that satisfy
(\ref{eq:moreci1})-(\ref{eq:moreci5}) but are not representable.  Of these,
46 are not complete and each yield new CI implications.  Namely, if
$\mathcal{L}$ is such a semi-gaussoid and $\bar{\mathcal{L}}$ the smallest
complete relation containing $\mathcal{L}$, then
$\mathcal{L}\Rightarrow(\bar{\mathcal{L}}\setminus\mathcal{L})$.  After a
careful check, we find that the following 13 CI implications together with
their duals generate all of the 46 CI implications given by the
non-complete semigaussoids:
\allowdisplaybreaks 
\begin{align}
  \label{eq:cistart}
   \{23|4, 23|14, 24|1, 34|1\} &\quad\Longrightarrow\quad \{23, 23|1,
   24|13, 34|12\}, \\ \label{eq:ci2} 
   \{23, 23|1, 24|1, 34|1\} &\quad\Longrightarrow\quad \{23|4, 23|14,
   24|13, 34|12\}, \\ \label{eq:ci3} 
   \{14|2, 14|3, 14|23, 23|14\} &\quad\Longrightarrow\quad \{14\}, \\
   \label{eq:ci4} 
   \{14, 14|2, 14|23, 23|14\} &\quad\Longrightarrow\quad \{14|3\} , \\ \label{eq:ci5}
   \{14, 14|2, 14|3, 23|14\} &\quad\Longrightarrow\quad \{14|23\} , \\ \label{eq:ci6}
   \{14, 14|23, 23|1,23|14\} &\quad\Longrightarrow\quad \{14|2, 14|3\} , \\
   \label{eq:ci7} 
   \{14|2, 14|3, 23|1, 23|14\} &\quad\Longrightarrow\quad \{14, 14|23\}
  , \\ \label{eq:ci8} 
   \{12, 14|3, 14|23, 23|14\} &\quad\Longrightarrow\quad \{12|3, 12|4,
   12|34, 23|4\} , \\ \label{eq:ci9} 
   \{12, 14|3, 23|4, 23|14\} &\quad\Longrightarrow\quad \{12|3, 12|4,
   12|34, 14|23\} , \\ \label{eq:ci10} 
   \{12|3, 14|2, 23|4, 23|14\} &\quad\Longrightarrow\quad \{12, 12|4,
   12|34, 14\} , \\ \label{eq:ci11} 
   \{12|3, 14, 14|2, 23|14\} &\quad\Longrightarrow\quad \{12, 12|4,
   12|34, 23|4\} , \\ \label{eq:ci12} 
   \{14|2, 23|1, 23|4, 23|14\} &\quad\Longrightarrow\quad \{23\} , \\
   \label{eq:ciend} 
   \{14|2, 23, 23|1, 23|14\} &\quad\Longrightarrow\quad \{23|4\} . 
 \end{align}
Although the CI implications (\ref{eq:cistart})-(\ref{eq:ciend}) are
  written with concrete choices of indices, they should be viewed as
  representing an equivalence class, that is, as the class of
  implications that can be obtained by a permutation of the indices.

\begin{theorem}
  \label{thm:ci}
  A relation on $[m]=[4]$ is complete if and only if it satisfies the
  CI implications (\ref{c3})-(\ref{eq:moreci5}),
  (\ref{eq:cistart})-(\ref{eq:ciend}) and the duals of
  (\ref{eq:cistart})-(\ref{eq:ciend}).
\end{theorem}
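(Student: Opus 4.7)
The plan is to reduce the theorem to a finite verification by exploiting the enumeration of semigaussoids carried out in Section~\ref{sec:model}. The forward direction, necessity, is straightforward: if $\mathcal{L}$ is complete and $(\mathcal{L}_1,\mathcal{L}_2)$ is a CI implication with $\mathcal{L}_1\subseteq\mathcal{L}$, then every $\Sigma\in V_\pd(\mathcal{L})$ satisfies $\mathcal{L}_1$, hence also $\mathcal{L}_2$ by definition of implication, so $\mathcal{L}_2\subseteq\mathcal{L}(V_\pd(\mathcal{L}))=\mathcal{L}$. This handles (\ref{c3})--(\ref{c5}), (\ref{eq:moreci1})--(\ref{eq:moreci5}) (which are genuine CI implications, proved in Lemma~\ref{thm:moreci}), and (\ref{eq:cistart})--(\ref{eq:ciend}). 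For the duals of (\ref{eq:cistart})--(\ref{eq:ciend}) one invokes Lemma~\ref{lem:dual}(ii)--(iii): $\mathcal{L}$ is complete iff $\mathcal{L}^d$ is, and $\mathcal{L}_1\Rightarrow\mathcal{L}_2$ iff $\mathcal{L}_1^d\Rightarrow\mathcal{L}_2^d$.

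The sufficiency direction, by contrast, rests on the classification done earlier. First I would observe that any relation $\mathcal{L}$ satisfying (\ref{c3})--(\ref{c5}) is by definition a semigaussoid, so by the enumeration summarized in Section~\ref{subsec:all-models}, $\mathcal{L}$ is equivalent to one of the $157$ semigaussoids on $[4]$. Among these, the $53$ representable ones are automatically complete, and the $10$ that fail one of (\ref{eq:moreci1})--(\ref{eq:moreci5}) are excluded by assumption. This leaves exactly $94$ candidates, of which (by the proof of Theorem~\ref{thm:all-complete}) $48$ are complete (the non-representable complete relations listed in Appendix~\ref{sec:appendix}) and $46$ are not.

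The core of the proof is therefore to verify that each of these $46$ non-complete semigaussoids violates at least one of (\ref{eq:cistart})--(\ref{eq:ciend}) or one of their duals. For each such semigaussoid $\mathcal{L}$, let $\bar{\mathcal{L}}$ denote the smallest complete relation containing $\mathcal{L}$, computed via $\bar{\mathcal{L}}=\mathcal{L}(V_\pd(\mathcal{L}))$ using the representable decomposition from Theorem~\ref{thm:complete-intersection-represent}. The genuine new implication is $\mathcal{L}\Rightarrow(\bar{\mathcal{L}}\setminus\mathcal{L})$. I would then exhibit, for each of the $46$ cases, a sub-implication of this that coincides (up to permutation of $[4]$ and possibly dualization via Lemma~\ref{lem:dual}) with one of (\ref{eq:cistart})--(\ref{eq:ciend}). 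Since the $46$ classes split into orbits under the combined action of permutations of $[4]$ and duality, this amounts to checking a much smaller number of representative cases, and the $13$ implications listed are chosen precisely to cover all orbits.

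The main obstacle is the bookkeeping in this last step: one must confirm both that each of the $46$ non-complete semigaussoids fails some listed implication, and that none of the listed implications is redundant with the earlier axioms or with each other modulo duality and permutation. This is the kind of finite check best carried out by a small computer search that, for each of the $46$ semigaussoids, tests each of the $13$ implications together with all permutations of $[4]$ applied both to $\mathcal{L}$ and to $\mathcal{L}^d$. Once every non-complete semigaussoid is accounted for, sufficiency follows: any relation satisfying (\ref{c3})--(\ref{eq:moreci5}) together with (\ref{eq:cistart})--(\ref{eq:ciend}) and their duals cannot lie in the remaining $46$ equivalence classes, hence must be one of the $101$ complete relations.
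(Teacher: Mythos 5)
Your proposal is correct and follows essentially the same route as the paper, which proves this theorem implicitly through the discussion preceding it: necessity because complete relations satisfy every genuine CI implication (with duality handled by Lemma~\ref{lem:dual}), and sufficiency by reducing, via the semigaussoid enumeration and Lemma~\ref{thm:moreci}, to a finite check that each of the $46$ non-complete semigaussoids violates one of the $13$ listed implications or their duals up to permutation of $[4]$. The only (harmless) difference is that the paper phrases the finite check as the $13$ implications and their duals ``generating'' all $46$ induced implications $\mathcal{L}\Rightarrow(\bar{\mathcal{L}}\setminus\mathcal{L})$ --- a slightly stronger statement needed for Corollary~\ref{thm:allci} --- whereas your weaker verification (each of the $46$ fails some listed implication) already suffices for the theorem itself.
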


\begin{corollary}
  \label{thm:allci}
  All Gaussian CI implications for 4 variables can be deduced from
  (\ref{c3})-(\ref{eq:moreci5}),
  (\ref{eq:cistart})-(\ref{eq:ciend}) and the duals of
  (\ref{eq:cistart})-(\ref{eq:ciend}).
\end{corollary}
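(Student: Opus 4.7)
The plan is to derive the corollary as a direct consequence of Theorem~\ref{thm:ci}. Recall that, by Definition~\ref{def:CI-impl}, a Gaussian CI implication $\mathcal{L}_1 \Rightarrow \mathcal{L}_2$ is the equality $V_\pd(\mathcal{L}_1)=V_\pd(\mathcal{L}_1\cup\mathcal{L}_2)$. The first step is to reformulate this as the purely combinatorial statement that every complete relation $\mathcal{K}\subseteq\mathcal{R}(4)$ with $\mathcal{L}_1\subseteq\mathcal{K}$ also satisfies $\mathcal{L}_2\subseteq\mathcal{K}$. The reformulation is immediate: from $\mathcal{L}_1\subseteq\mathcal{K}$ one obtains $V_\pd(\mathcal{K})\subseteq V_\pd(\mathcal{L}_1)=V_\pd(\mathcal{L}_1\cup\mathcal{L}_2)$ by the order-reversing property of $V_\pd$, so every $\Sigma\in V_\pd(\mathcal{K})$ satisfies all couples in $\mathcal{L}_2$, and completeness of $\mathcal{K}$ then forces $\mathcal{L}_2\subseteq\mathcal{K}$.

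Next, I would introduce a closure operator $\mathcal{L}\mapsto\bar{\mathcal{L}}$ that iteratively adjoins the conclusion of any instance of (\ref{c3})--(\ref{eq:moreci5}), (\ref{eq:cistart})--(\ref{eq:ciend}), or the dual of (\ref{eq:cistart})--(\ref{eq:ciend}) whose hypotheses lie in the current set. Since $\mathcal{R}(4)$ is finite and each rule only adds couples, this process stabilises after finitely many steps and produces a relation closed under all of the listed implications. By Theorem~\ref{thm:ci}, the closure $\bar{\mathcal{L}}_1$ is complete. The combinatorial reformulation from the first paragraph then gives $\mathcal{L}_2\subseteq\bar{\mathcal{L}}_1$, and unwinding the definition of $\bar{\mathcal{L}}_1$ shows that each couple of $\mathcal{L}_2$ appears as the conclusion of some finite chain of applications of the listed rules starting from $\mathcal{L}_1$. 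This chain is exactly the deduction of $\mathcal{L}_1 \Rightarrow \mathcal{L}_2$ demanded by the corollary.

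There is no substantive obstacle beyond what Theorem~\ref{thm:ci} already supplies; the remainder is a standard closure-operator argument. The only point that merits emphasis is the necessity of including the duals of (\ref{eq:cistart})--(\ref{eq:ciend}) in the deduction system: without them the closure could stabilise at a semigaussoid that satisfies (\ref{c3})--(\ref{eq:moreci5}) and (\ref{eq:cistart})--(\ref{eq:ciend}) and yet fails some dual implication, and such a relation need not be complete (cf.~Lemma~\ref{lem:dual}(iii)). Incorporating the duals ensures that the closure lands inside the collection of complete relations, at which point the corollary drops out of Theorem~\ref{thm:ci}.
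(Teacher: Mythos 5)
Your argument is correct and is essentially the route the paper intends: the corollary is stated as an immediate consequence of Theorem~\ref{thm:ci}, with the closure idea already foreshadowed in the paper's remark that for a non-complete semigaussoid $\mathcal{L}$ with completion $\bar{\mathcal{L}}$ one has $\mathcal{L}\Rightarrow(\bar{\mathcal{L}}\setminus\mathcal{L})$. Your reformulation of a CI implication in terms of complete relations and the finite closure-operator argument correctly fill in the details the paper leaves implicit.
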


We conclude by demonstrating how to prove some of the implications in
(\ref{eq:cistart})-(\ref{eq:ciend}) by using the weak transitivity
property.  

\begin{proof}[Proof of (\ref{eq:cistart})]
  Suppose the CI statements in the relation on the left hand side are
  satisfied.  By (\ref{c3}),
  $\{24|1,34|1\}\Rightarrow\{24|13,34|12\}$.  By (\ref{c4}),
  $\{24|1,23|41\}\Rightarrow\{23|1,24|13\}$.  Hence, it remains to
  show that $23$ is implied.  By weak transitivity applied to $\{23|4,
  23|14\}$, we conclude that $12|4$ or $13|4$ hold.  There are two
  cases:
  \begin{enumerate}
  \item[(i)] Suppose that $12|4$ holds.  Then
    $\{12|4,24|1\}\Rightarrow \{12,24\}$ by the `intersection'
    implication in (\ref{c5}).  Applying~(\ref{c4}) again,
    $\{12,23|1\}\Rightarrow\{23\}$.
  \item[(ii)] By symmetry, we reach the conclusion that $23$ holds also
    when starting from $13|4$ instead of $12|4$.
    \qedhere
\end{enumerate}
\end{proof}

\begin{proof}[Proof of (\ref{eq:ci6})]
  In view of (\ref{eq:ci4}) (interchange 2 and 3 when necessary), we only
  need to show $14|2$ or $14|3$ must hold when the CI statements on the
  left hand side hold.  From weak transitivity applied to $\{23|1,23|14\}$,
  two cases arise:
  \begin{enumerate}
  \item[(i)] If $24|1$ holds then we may apply (\ref{c3}) to obtain
    $\{24|1,23|1\}\Rightarrow \{24|13\}$.  By (\ref{c5}), $\{24|13, 14|23\}
    \Rightarrow \{14|3\}$.
  \item[(ii)] If $34|1$ holds then we may conclude, by symmetry, that
    $14|2$ holds.
    \qedhere
  \end{enumerate}
\end{proof}

\begin{proof}[Proof of (\ref{eq:ci9})]
  It suffices to show that $12|3$ and $12|4$ are implied.  Applying weak
  transitivity to $\{23|4,23|14\}$ we can consider the two cases $12|4$ or
  $13|4$:
  \begin{itemize}
  \item [(i)] If $13|4$ holds then by (\ref{c5}), $\{13|4,14|3\} \Rightarrow
    \{13,14\}$.  Using (\ref{c3}), we can conclude $\{12,13,14\} \Rightarrow
    \{12|3, 12|4\}$.
  \item [(ii)] If $12|4$ holds then we can apply weak transitivity to
    $\{12,12|4\}$.  The two subcases are:
    \begin{enumerate}
    \item[(ii-a)] If $24$ holds then, by (\ref{c4}), $\{24,23|4\} \Rightarrow
      \{23\}$ and, by (\ref{c3}), $\{12,24\} \Rightarrow \{12|4\}$.  Another
      application of (\ref{c3}) yields $\{12,23\} \Rightarrow \{12|3\}$.
    \item[(ii-b)] If $14$ holds then we may apply weak transitivity to
      $\{14,14|3\}$ and split into two further subcases:
      \begin{enumerate}
      \item[(ii-b1)] If $13$ holds, then (\ref{c3}) yields $\{12,13\}\Rightarrow
        \{12|3\}$.  
      \item[(ii-b2)] If $34$ holds, then (\ref{c4}) yields
        $\{34,23|4\}\Rightarrow 
        \{23\}$.  Applying (\ref{c3}) we conclude $\{12,23\}\Rightarrow
        \{12|3\}$.  
        \qedhere
      \end{enumerate}
    \end{enumerate}
  \end{itemize}
\end{proof}

\begin{proof}[Proof of (\ref{eq:ci10})]
  Since $\{12,14|2\} \Rightarrow \{14,12|4\}$ by (\ref{c4}) and
  $\{12|4,23|4\} \Rightarrow \{12|34\}$ by (\ref{c3}), it suffices to show
  that $12$ holds under the assumed CI statements on the left hand side.
  Apply weak transitivity to $\{23|4, 23|14\}$ to obtain two cases:
  \begin{itemize}
  \item [(i)] If $12|4$ holds then (\ref{c5}) yields $\{12|4,14|2\}
    \Rightarrow \{12\}$.
  \item [(ii)]  If $13|4$ holds then (\ref{eq:moreci4}) yields
    $\{13|4,14|2,12|3\} \Rightarrow \{12\}$. 
    \qedhere
  \end{itemize}
\end{proof}

\clearpage


\begin{center}
\begin{longtable}{rlccl}
\caption[]{All representable relations on four variables, up to equivalence.}
\label{tab:rep} \\ 

\hline\hline $i$ & Elements of $\mathcal{L}_i$ & Singular &
Dual &\\ \hline  \endfirsthead

\hline\hline $i$ & Elements of $\mathcal{L}_i$ & Singular &
Dual &\\ \hline  \endhead

\multicolumn{5}{r}{{Continued on next page}} \\ \hline \endfoot

\hline \endlastfoot

1  & $12|\ast$, $13|\ast$, $14|\ast$, $23|\ast$, $24|\ast$, $34|\ast$
   && 1 \\[0.05cm]
2  & $13|\ast$, $14|\ast$, $23|\ast$, $24|\ast$, $34|\ast$ && 
   2 \\[0.05cm]
3  & $14|\ast$, $23|1$, $23|14$, $24|\ast$, $34|\ast$ &&
   38   \\[0.05cm]
4  & $13|\ast$, $14|\ast$, $23|\ast$ $24|\ast$ && 
   4 \\[0.05cm]
5  & $14|\ast$, $24|\ast$, $34|\ast$ && 
   5 \\[0.05cm]
6  & $23|1$, $23|14$, $24|1$, $24|13$, $34|1$, $34|12$ &&
   39\\[0.05cm]
7  & $14|2$, $14|23$, $23|1$, $23|14$, $34|1$, $34|2$, $34|12$ &&
   40\\[0.05cm]
8  & $23|1$, $23|14$, $34|1$, $34|12$ &&
   41\\[0.05cm]
9  & $23|1$, $23|14$, $24|3$, $34|1$, $34|12$ &&
   42\\[0.05cm]
10  & $12$, $12|3$, $23$, $23|1$, $23|14$, $34|1$, $34|12$ &&
   10 &(14)(23)\\[0.05cm]
11  & $23|1$, $23|14$, $24$, $34|1$, $34|12$ &&
   43 \\[0.05cm]
12  & $14|23$, $23|14$ &&
   44\\[0.05cm]
13  & $12$, $14|23$, $23|14$ &&
   45\\[0.05cm]
14  & $14$, $14|23$, $23|14$ &\checkmark&
   46\\[0.05cm]
15  & $14$, $14|23$, $23$, $23|14$ &\checkmark&
   15 \\[0.05cm]
16  & $23|14$ &&
   47\\[0.05cm]
17  & $12|3$, $23|14$ &&
   48 \\[0.05cm]
18  & $14|2$, $23|14$ &&
   49\\[0.05cm]
19  & $12|3$, $14|2$, $23|14$ &&
   50\\[0.05cm]
20  & $14|2$, $14|3$, $23|14$ &\checkmark&
   51\\[0.05cm]
21  & $13|2$, $23|14$, $24|3$ &&
   52\\[0.05cm]
22  & $12$, $23|14$ &&
   22 &(13)\\[0.05cm]
23  & $12$, $14|3$, $23|14$ &&
    25 & (13)\\[0.05cm]
24  & $12$, $23|14$, $24|3$ &\checkmark&
    24 & (13)\\[0.05cm]
25  & $12$, $23|14$, $34|2$ &&
    23 & (13)\\[0.05cm]
26  & $14$, $23|14$ &&
    26 &(12)(34)\\[0.05cm]
27  & $12|3$, $14$, $23|14$ &&
    27 &(12)(34)\\[0.05cm]
28  & $13|2$, $14$, $23|14$, $24|3$ &\checkmark&
    28 &(13)(24) \\[0.05cm]
29  & $23$, $23|14$ &\checkmark&
    29\\[0.05cm]
30  & $14|2$, $23$, $23|14$ &\checkmark&
    30 & (23)\\[0.05cm]
31  & $12|3$ &&
    31 &(34)\\[0.05cm]
32  & $12|3$, $12|4$ &\checkmark&
    32\\[0.05cm]
33  & $12|3$, $13|4$ &&
    33 &(23)\\[0.05cm]
34  & $12|3$, $14|2$, $23|4$ &&
    34 & (12)(34)\\[0.05cm]
35  & $12|3$, $34|1$ &&
    35 &(12)(34)\\[0.05cm] 
36  & $12|3$, $12|4$, $34|1$ &\checkmark&
    36 &(12)\\[0.05cm]
37  & $12|3$, $12|4$, $34|1$, $34|2$ &\checkmark& 37\\[0.05cm]
38  & $14|\ast$, $23$, $23|4$, $24|\ast$, $34|\ast$ && 3\\[0.05cm]
39  & $23$, $23|4$, $24$, $24|3$, $34$, $34|2$ && 6\\[0.05cm]
40  & $14$, $14|3$, $23$, $23|4$, $34$, $34|1$, $34|2$ && 7\\[0.05cm]
41  & $23$, $23|4$, $34$, $34|2$ && 8 \\[0.05cm]
42  & $23$, $23|4$, $24|1$, $34$, $34|2$ && 9\\[0.05cm]
43  & $23$, $23|4$, $24|13$, $34$, $34|2$ &&11 \\[0.05cm]
44  & $14$, $23$ && 12\\[0.05cm]
45  & $12|34$, $14$, $23$ && 13\\[0.05cm]
46  & $14$, $14|23$, $23$ &\checkmark& 14\\[0.05cm]
47  & $23$ && 16\\[0.05cm]
48  & $12|4$, $23$ && 17\\[0.05cm]
49  & $14|3$, $23$ && 18\\[0.05cm]
50  & $12|4$, $14|3$, $23$ && 19\\[0.05cm]
51  & $14|2$, $14|3$, $23$ &\checkmark& 20\\[0.05cm]
52  & $13|4$, $23$, $24|1$ && 21\\[0.05cm]
53 & $\emptyset$  &&   53\\[0.05cm]
\hline
\end{longtable}
\end{center}


\begin{center}
\begin{longtable}[p]{rl@{\hspace{.75cm}}rl}
\caption{All complete non-representable relations on four variables, up to
  equivalence.} \label{tab:com}
\\

\hline\hline $i$ & Representable decomposition of $\mathcal{L}_i$ &
\multicolumn{2}{c}{$\!\!\!\!\!\!$Equivalence class}\\ 
\hline \endfirsthead 

\hline\hline $i$ & Representable decomposition of $\mathcal{L}_i$ &
\multicolumn{2}{c}{$\!\!\!\!\!\!$Equivalence class}\\ \hline \endhead 

\multicolumn{4}{r}{{Continued on next
    page}} \\ \hline \endfoot \hline \endlastfoot

54  & $13|\ast$, $14|\ast$, $23|\ast$, $24|\ast$, $34|\ast$  &  2 &    \\ 
 & $12|\ast$, $14|\ast$, $23|\ast$, $24|\ast$, $34|\ast$  &  2   & (234)  \\ 
\hline 
55  & $13|\ast$, $23|\ast$, $24|1$, $24|13$, $34|\ast$  &  3  & (34)  \\ 
 & $12|\ast$, $23|\ast$, $24|\ast$, $34|1$, $34|12$  &  3   &(234)  \\ 
\hline 
56  & $13|\ast$, $14|\ast$, $23|\ast$, $24|\ast$, $34|\ast$  &  2  &   \\ 
 & $12|\ast$, $23|\ast$, $24|\ast$, $34|1$, $34|12$  &  3   &(234)  \\ 
\hline 
57  & $13|\ast$, $14|\ast$, $23|\ast$, $24|\ast$, $34|\ast$  &  2     \\ 
 & $12|\ast$, $14|\ast$, $23|\ast$, $24|\ast$, $34|\ast$  &  2   &(234)  \\ 
 & $12|\ast$, $13|\ast$, $23|\ast$, $24|\ast$, $34|\ast$  &  2   &(24)  \\ 
\hline 
58  & $14|\ast$, $23|1$, $23|14$, $24|\ast$, $34|\ast$  &  3     \\ 
 & $13|\ast$, $14|2$, $14|23$, $23|\ast$, $34|\ast$  &  3   &(1243)  \\ 
 & $12|\ast$, $14|\ast$, $23|\ast$, $34|\ast$  &  4   &(234)  \\ 
\hline 
59  & $14|\ast$, $23|1$, $23|14$, $24|\ast$, $34|\ast$  &  3     \\ 
 & $12|\ast$, $14|\ast$, $23|\ast$, $34|\ast$  &  4   &(234)  \\ 
\hline 
60  & $13|\ast$, $14|\ast$, $23|\ast$, $24|\ast$, $34|\ast$  &  2     \\ 
 & $12|\ast$, $14|\ast$, $23|\ast$, $34|\ast$  &  4   &(234)  \\ 
\hline 
61  & $13|\ast$, $23|\ast$, $34|\ast$  &  5   &(34)  \\ 
 & $12|4$, $12|34$, $23|1$, $23|4$, $23|14$, $34|1$, $34|12$  &  7   &(24)  \\ 
\hline 
62  & $13|\ast$, $23|\ast$, $34|\ast$  &  5   &(34)  \\ 
 & $12$, $12|3$, $23$, $23|1$, $23|14$, $34|1$, $34|12$  &  10 &    \\ 
\hline
\pagebreak 
63  & $12|\ast$, $23|\ast$, $24|\ast$, $34|1$, $34|12$  &  3   &(234)  \\ 
 & $12|\ast$, $14|\ast$, $23|\ast$, $34|\ast$  &  4   &(234)  \\ 
 & $13|\ast$, $23|\ast$, $34|\ast$  &  5   &(34)  \\ 
\hline 
64  & $12|\ast$, $23|\ast$, $24|\ast$, $34|1$, $34|12$  &  3   &(234)  \\ 
 & $12|\ast$, $14|\ast$, $23|\ast$, $34|\ast$  &  4   &(234)  \\ 
 & $12$, $12|3$, $13|\ast$, $23|\ast$, $34|\ast$  &  38   &(143)  \\ 
\hline 
65  & $14|\ast$, $23|1$, $23|14$, $24|\ast$, $34|\ast$  &  3     \\ 
 & $12|\ast$, $23|\ast$, $24|\ast$, $34|1$, $34|12$  &  3   &(234)  \\ 
 & $13|\ast$, $23|\ast$, $24$, $24|3$, $34|\ast$  &  38   &(34)  \\ 
\hline 
66  & $12|\ast$, $23|\ast$, $24|\ast$, $34|1$, $34|12$  &  3   &(234)  \\ 
 & $13|\ast$, $23|\ast$, $24$, $24|3$, $34|\ast$  &  38   &(34)  \\ 
\hline 
67  & $12|\ast$, $14|\ast$, $23|\ast$, $34|\ast$  &  4   &(234)  \\ 
 & $13|\ast$, $23|\ast$, $34|\ast$  &  5   &(34)  \\ 
\hline 
68  & $14|2$, $14|23$, $23|1$, $23|14$, $34|1$, $34|2$, $34|12$  &  7     \\ 
 & $13|2$, $13|4$, $13|24$, $14|2$, $14|23$, $23|4$, $23|14$  &  7   &(1243)  \\ 
\hline 
69  & $13|\ast$, $14|\ast$, $23|\ast$, $24|\ast$  &  4     \\ 
 & $14|2$, $14|23$, $23|1$, $23|14$, $34|1$, $34|2$, $34|12$  &  7 &    \\ 
\hline 
70  & $14|\ast$, $23|1$, $23|14$, $24|\ast$, $34|\ast$  &  3     \\ 
 & $12|\ast$, $13|\ast$, $14|\ast$, $23|4$, $23|14$  &  3   &(14)(23)  \\ 
 & $13|\ast$, $14|\ast$, $23|\ast$, $24|\ast$  &  4     \\ 
 & $12|\ast$, $14|\ast$, $23|\ast$, $34|\ast$  &  4   &(234)  \\ 
\hline 
71  & $14|\ast$, $23|1$, $23|14$, $24|\ast$, $34|\ast$  &  3     \\ 
 & $13|\ast$, $14|\ast$, $23|\ast$, $24|\ast$  &  4     \\ 
 & $12|\ast$, $14|\ast$, $23|\ast$, $34|\ast$  &  4   &(234)  \\ 
\hline 
72  & $13|\ast$, $14|\ast$, $23|\ast$, $24|\ast$  &  4     \\ 
 & $12|\ast$, $14|\ast$, $23|\ast$, $34|\ast$  &  4   &(234)  \\ 
\hline 
73  & $23|1$, $23|14$, $34|1$, $34|12$  &  8     \\ 
 & $23|1$, $23|14$, $24|1$, $24|13$  &  8   &(243)  \\ 
\hline 
74  & $14|2$, $14|23$, $23|1$, $23|14$, $34|1$, $34|2$, $34|12$  &  7     \\ 
 & $14$, $14|2$, $23|1$, $23|14$, $24$, $24|1$, $24|13$  &  10   &(243)  \\ 
\hline 
75  & $12|\ast$, $23|\ast$, $24|\ast$  &  5   &(234)  \\ 
 & $12|3$, $12|4$, $12|34$, $14|3$, $14|23$, $23|4$, $23|14$  &  7   &(13)(24)  \\ 
 & $12|3$, $13|4$, $13|24$, $23|4$, $23|14$  &  9   &(142)  \\ 
\hline 
76  & $13|\ast$, $23|\ast$, $34|\ast$  &  5   &(34)  \\ 
 & $12|\ast$, $23|\ast$, $24|\ast$  &  5   &(234)  \\ 
 & $12|4$, $12|34$, $23|1$, $23|4$, $23|14$, $34|1$, $34|12$  &  7   &(24)  \\ 
 & $13|4$, $13|24$, $23|1$, $23|4$, $23|14$, $24|1$, $24|13$  &  7   &(243)  \\ 
\hline 
77  & $12|\ast$, $23|\ast$, $24|\ast$  &  5   &(234)  \\ 
 & $12$, $12|3$, $23$, $23|1$, $23|14$, $34|1$, $34|12$  &  10     \\ 
 & $12$, $12|3$, $13$, $13|2$, $23|14$  &  43   &(1423)  \\ 
\hline 
78  & $12|\ast$, $23|\ast$, $24|\ast$  &  5   &(234)  \\ 
 & $12$, $12|4$, $12|34$, $14$, $14|2$, $23|4$, $23|14$  &  10   &(1432)  \\ 
 & $12$, $13|4$, $13|24$, $23|4$, $23|14$  &  11   &(142)  \\ 
\hline 
\pagebreak 
79  & $12|\ast$, $13|\ast$, $14|\ast$, $23|4$, $23|14$  &  3   &(14)(23)  \\ 
 & $12|\ast$, $14|\ast$, $23|\ast$, $34|\ast$  &  4   &(234)  \\ 
 & $12|\ast$, $23|\ast$, $24|\ast$  &  5   &(234)  \\ 
 & $12$, $12|3$, $13|\ast$, $23|\ast$, $34|\ast$  &  38   &(143)  \\ 
\hline 
80  & $14$, $14|2$, $23|1$, $23|14$, $24$, $24|1$, $24|13$  &  10   &(243)  \\ 
 & $12$, $12|4$, $12|34$, $14$, $14|2$, $23|4$, $23|14$  &  10   &(1432)  \\ 
\hline 
81  & $14$, $14|3$, $23|1$, $23|14$, $34$, $34|1$, $34|12$  &  10   &(24)  \\ 
 & $14$, $14|2$, $23|1$, $23|14$, $24$, $24|1$, $24|13$  &  10   &(243)  \\ 
\hline 
82  & $12|\ast$, $14|\ast$, $23|\ast$, $34|\ast$  &  4   &(234)  \\ 
 & $14$, $14|2$, $23|1$, $23|14$, $24$, $24|1$, $24|13$  &  10   &(243)  \\ 
\hline 
83  & $13|\ast$, $23|\ast$, $34|\ast$  &  5   &(34)  \\ 
 & $12|\ast$, $23|\ast$, $24|\ast$  &  5   &(234)  \\ 
 & $12$, $12|3$, $23$, $23|1$, $23|14$, $34|1$, $34|12$  &  10     \\ 
 & $13$, $13|2$, $23$, $23|1$, $23|14$, $24|1$, $24|13$  &  10   &(23)  \\ 
\hline 
84  & $13|\ast$, $14|\ast$, $23|\ast$, $24|\ast$  &  4     \\ 
 & $12|\ast$, $14|\ast$, $23|\ast$, $34|\ast$  &  4   &(234)  \\ 
 & $13|\ast$, $23|\ast$, $34|\ast$  &  5   &(34)  \\ 
 & $12|\ast$, $23|\ast$, $24|\ast$  &  5   &(234)  \\ 
\hline 
85  & $13|\ast$, $14|2$, $14|23$, $23|\ast$, $34|\ast$  &  3   &(1243)  \\ 
 & $13|\ast$, $14|\ast$, $23|\ast$, $24|\ast$  &  4     \\ 
 & $12|\ast$, $14|\ast$, $23|\ast$, $34|\ast$  &  4   &(234)  \\ 
 & $12|\ast$, $14$, $14|2$, $23|\ast$, $24|\ast$  &  38   &(13)(24)  \\ 
\hline 
86  & $13|\ast$, $23|\ast$, $24$, $24|3$, $34|\ast$  &  38   &(34)  \\ 
 & $12|\ast$, $23|\ast$, $24|\ast$, $34$, $34|2$  &  38   &(234)  \\ 
\hline 
87  & $13|\ast$, $14|\ast$, $23|\ast$, $24|\ast$, $34|\ast$  &  2     \\ 
 & $12|\ast$, $23|\ast$, $24|\ast$, $34$, $34|2$  &  38   &(234)  \\ 
\hline 
88  & $12|\ast$, $14|\ast$, $23|\ast$, $34|\ast$  &  4   &(234)  \\ 
 & $14|\ast$, $23$, $23|4$, $24|\ast$, $34|\ast$  &  38     \\ 
 & $13|\ast$, $14$, $14|3$, $23|\ast$, $34|\ast$  &  38   &(1243)  \\ 
\hline 
89  & $12|\ast$, $14|\ast$, $23|\ast$, $34|\ast$  &  4   &(234)  \\ 
 & $14|\ast$, $23$, $23|4$, $24|\ast$, $34|\ast$  &  38     \\ 
\hline 
90  & $13|\ast$, $23|\ast$, $34|\ast$  &  5   &(34)  \\ 
 & $12$, $12|3$, $23$, $23|1$, $23|4$, $34$, $34|2$  &  40   &(24)  \\ 
\hline 
91  & $13|\ast$, $23|\ast$, $34|\ast$  &  5   &(34)  \\ 
 & $12|4$, $12|34$, $23$, $23|4$, $23|14$, $34$, $34|2$  &  10   &(14)(23)  \\ 
\hline 
92  & $12|\ast$, $14|\ast$, $23|\ast$, $34|\ast$  &  4   &(234)  \\ 
 & $13|\ast$, $23|\ast$, $34|\ast$  &  5   &(34)  \\ 
 & $12|\ast$, $23|\ast$, $24|\ast$, $34$, $34|2$  &  38   &(234)  \\ 
\hline 
93  & $13|\ast$, $23|\ast$, $24|1$, $24|13$, $34|\ast$  &  3   &(34)  \\ 
 & $14|\ast$, $23$, $23|4$, $24|\ast$, $34|\ast$  &  38     \\ 
 & $12|\ast$, $23|\ast$, $24|\ast$, $34$, $34|2$  &  38   &(234)  \\ 
\hline 
94  & $14$, $14|3$, $23$, $23|4$, $34$, $34|1$, $34|2$  &  40     \\ 
 & $13$, $13|2$, $13|4$, $14$, $14|3$, $23$, $23|1$  &  40   &(1243)  \\ 
\hline 
95  & $13|\ast$, $14|\ast$, $23|\ast$, $24|\ast$  &  4     \\ 
 & $14$, $14|3$, $23$, $23|4$, $34$, $34|1$, $34|2$  &  40     \\ 
\hline 
\pagebreak 
96  & $13|\ast$, $14|\ast$, $23|\ast$, $24|\ast$  &  4     \\ 
 & $12|\ast$, $14|\ast$, $23|\ast$, $34|\ast$  &  4   &(234)  \\ 
 & $14|\ast$, $23$, $23|4$, $24|\ast$, $34|\ast$  &  38     \\ 
 & $12|\ast$, $13|\ast$, $14|\ast$, $23$, $23|1$  &  38   &(14)(23)  \\ 
\hline 
97  & $13|\ast$, $14|\ast$, $23|\ast$, $24|\ast$  &  4     \\ 
 & $12|\ast$, $14|\ast$, $23|\ast$, $34|\ast$  &  4   &(234)  \\ 
 & $14|\ast$, $23$, $23|4$, $24|\ast$, $34|\ast$  &  38     \\ 
\hline 
98  & $23$, $23|4$, $34$, $34|2$  &  41     \\ 
 & $23$, $23|4$, $24$, $24|3$  &  41   &(243)  \\ 
\hline 
99  & $14|3$, $14|23$, $23$, $23|4$, $24$, $24|3$, $24|13$  &  10   &(134)  \\ 
 & $14$, $14|3$, $23$, $23|4$, $34$, $34|1$, $34|2$  &  40     \\ 
\hline 
100  & $12|\ast$, $23|\ast$, $24|\ast$  &  5   &(234)  \\ 
 & $12$, $12|3$, $12|4$, $14$, $14|2$, $23$, $23|1$  &  40   &(13)(24)  \\ 
 & $12|4$, $13$, $13|2$, $23$, $23|1$  &  42   &(142)  \\ 
\hline 
101  & $13|\ast$, $23|\ast$, $34|\ast$  &  5   &(34)  \\ 
 & $12|\ast$, $23|\ast$, $24|\ast$  &  5   &(234)  \\ 
 & $12$, $12|3$, $23$, $23|1$, $23|4$, $34$, $34|2$  &  40   &(24)  \\ 
 & $13$, $13|2$, $23$, $23|1$, $23|4$, $24$, $24|3$  &  40   &(243)  \\
\end{longtable}
\end{center}

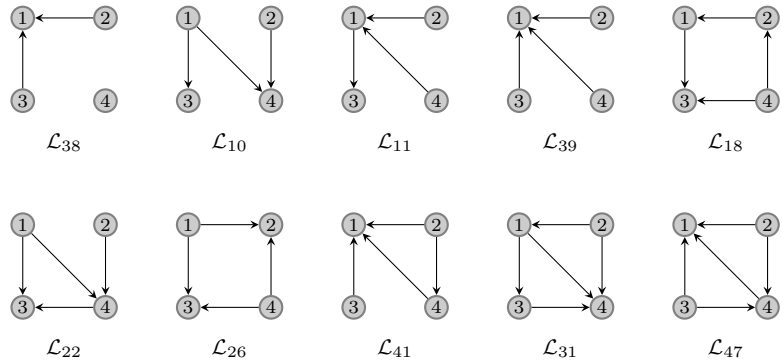
\begin{figure}[h]
  \centering
  \vspace{0.3cm}
\begin{tikzpicture}[>=stealth,scale=1.1]

\node (1) at ( 0,3.5) [usual] {\scriptsize 1};
\node (2) at ( 1,3.5) [usual] {\scriptsize 2}
  edge [->] (1);  
\node (3) at ( 0,2.5) [usual] {\scriptsize 3}
  edge [->] (1);  
\node (4) at ( 1,2.5) [usual] {\scriptsize 4};

\node (13) at ( 2,3.5) [usual] {\scriptsize 1};
\node (14) at ( 3,3.5) [usual] {\scriptsize 2};
\node (15) at ( 2,2.5) [usual] {\scriptsize 3}
    edge [<-] (13);
\node (16) at ( 3,2.5) [usual] {\scriptsize 4}
    edge [<-] (13)
    edge [<-] (14);

\node (5) at ( 6,3.5) [usual] {\scriptsize 1};
\node (6) at ( 7,3.5) [usual] {\scriptsize 2}
  edge [->] (5);
\node (7) at ( 6,2.5) [usual] {\scriptsize 3}
  edge [->] (5);
\node (8) at ( 7,2.5) [usual] {\scriptsize 4}
  edge [->] (5);

\node (9)  at ( 4,3.5) [usual] {\scriptsize 1};
\node (10) at ( 5,3.5) [usual] {\scriptsize 2}
  edge [->] (9);
\node (11) at ( 4,2.5) [usual] {\scriptsize 3}
  edge [<-] (9);
\node (12) at ( 5,2.5) [usual] {\scriptsize 4}
  edge [->] (9);

\node (23) at ( 8,2.5) [usual] {\scriptsize 3};
\node (24) at ( 9,2.5) [usual] {\scriptsize 4}
    edge [->] (23);
\node (22) at ( 9,3.5) [usual] {\scriptsize 2}
    edge [<-] (24);
\node (21) at ( 8,3.5) [usual] {\scriptsize 1}
    edge [<-] (22)
    edge [->] (23);

\node  at (0.5,2)  {\footnotesize $\mathcal{L}_{38}$};
\node  at (2.5,2)  {\footnotesize $\mathcal{L}_{10}$};
\node  at (4.5,2)  {\footnotesize $\mathcal{L}_{11}$};
\node  at (6.5,2)  {\footnotesize $\mathcal{L}_{39}$};
\node  at (8.5,2)  {\footnotesize $\mathcal{L}_{18}$};


\node (29) at ( 0,1) [usual] {\scriptsize 1};
\node (30) at ( 1,1) [usual] {\scriptsize 2};
\node (31) at ( 0,0) [usual] {\scriptsize 3}
    edge [<-] (29);
\node (32) at ( 1,0) [usual] {\scriptsize 4}
    edge [<-] (29)
    edge [<-] (30)
    edge [->] (31);

\node (18) at ( 3,1) [usual] {\scriptsize 2};
\node (19) at ( 2,0) [usual] {\scriptsize 3};
\node (20) at ( 3,0) [usual] {\scriptsize 4}
    edge [->] (18)
    edge [->] (19);
\node (17) at ( 2,1) [usual] {\scriptsize 1}
    edge [->] (18)
    edge [->] (19);

\node (25) at ( 4,1) [usual] {\scriptsize 1};
\node (26) at ( 5,1) [usual] {\scriptsize 2}
    edge [->] (25);
\node (27) at ( 4,0) [usual] {\scriptsize 3}
    edge [->] (25);
\node (28) at ( 5,0) [usual] {\scriptsize 4}
    edge [->] (25)
    edge [<-] (26);

\node (37) at ( 6,1) [usual] {\scriptsize 1};
\node (38) at ( 7,1) [usual] {\scriptsize 2}
    edge [->] (37);
\node (39) at ( 6,0) [usual] {\scriptsize 3}
    edge [<-] (37);
\node (40) at ( 7,0) [usual] {\scriptsize 4}
    edge [<-] (37)
    edge [<-] (38)
    edge [<-] (39);

\node (33) at ( 8,1) [usual] {\scriptsize 1};
\node (34) at ( 9,1) [usual] {\scriptsize 2}
    edge [->] (33);
\node (35) at ( 8,0) [usual] {\scriptsize 3}
    edge [->] (33);
\node (36) at ( 9,0) [usual] {\scriptsize 4}
    edge [->] (33)
    edge [<-] (34)
    edge [<-] (35);

\node  at (0.5,-0.5)  {\footnotesize $\mathcal{L}_{22}$};
\node  at (2.5,-0.5)  {\footnotesize $\mathcal{L}_{26}$};
\node  at (4.5,-0.5)  {\footnotesize $\mathcal{L}_{41}$};
\node  at (6.5,-0.5)  {\footnotesize $\mathcal{L}_{31}$};
\node  at (8.5,-0.5)  {\footnotesize $\mathcal{L}_{47}$};

\end{tikzpicture}
  \caption{Representable relations associated with acyclic digraphs.
    The relations are labelled in reference to Table~\ref{tab:rep}.}
  \label{fig:dags}
\end{figure}

\end{appendix}


\bibliographystyle{amsalpha}
\bibliography{drtonxiao-ams}

\providecommand{\bysame}{\leavevmode\hbox to3em{\hrulefill}\thinspace}
\providecommand{\MR}{\relax\ifhmode\unskip\space\fi MR }
\providecommand{\MRhref}[2]{%
  \href{http://www.ams.org/mathscinet-getitem?mr=#1}{#2}
}
\providecommand{\href}[2]{#2}
\begin{thebibliography}{AMP01}

\bibitem[AMP01]{andersson:2001}
Steen~A. Andersson, David Madigan, and Michael~D. Perlman, \emph{Alternative
  {M}arkov properties for chain graphs}, Scand. J. Statist. \textbf{28} (2001),
  no.~1, 33--85. \MR{MR1844349 (2002j:62075)}

\bibitem[BCR98]{bochnak:1998}
Jacek Bochnak, Michel Coste, and Marie-Fran{\c{c}}oise Roy, \emph{Real
  algebraic geometry}, Ergebnisse der Mathematik und ihrer Grenzgebiete (3)
  [Results in Mathematics and Related Areas (3)], vol.~36, Springer-Verlag,
  Berlin, 1998, Translated from the 1987 French original, Revised by the
  authors. \MR{MR1659509 (2000a:14067)}

\bibitem[BR90]{benedetti:1990}
Riccardo Benedetti and Jean-Jacques Risler, \emph{Real algebraic and
  semi-algebraic sets}, Actualit\'es Math\'ematiques. [Current Mathematical
  Topics], Hermann, Paris, 1990. \MR{MR1070358 (91j:14045)}

\bibitem[CLO07]{cox:2007}
David Cox, John Little, and Donal O'Shea, \emph{Ideals, varieties, and
  algorithms}, third ed., Undergraduate Texts in Mathematics, Springer, New
  York, 2007, An introduction to computational algebraic geometry and
  commutative algebra. \MR{MR2290010 (2007h:13036)}

\bibitem[Drt09]{drton:lrt}
Mathias Drton, \emph{Likelihood ratio tests and singularities}, Ann. Statist.
  \textbf{37} (2009), no.~2, 979--1012.

\bibitem[DSS09]{drton:2009}
Mathias Drton, Bernd Sturmfels, and Seth Sullivant, \emph{Lectures on algebraic
  statistics}, Birkhauser Verlag, Basel, Switzerland, 2009.

\bibitem[GPS09]{singular}
Gert-Martin Greuel, Gerhard Pfister, and Hans Sch\"onemann, \emph{{\sc
  Singular} 3.1.0}, {A computer algebra system for polynomial computations},
  Centre for Computer Algebra, University of Kaiserslautern, 2009, {\tt
  http://www.singular.uni-kl.de}.

\bibitem[Lau96]{lauritzen:1996}
Steffen~L. Lauritzen, \emph{Graphical models}, Oxford Statistical Science
  Series, vol.~17, The Clarendon Press Oxford University Press, New York, 1996,
  Oxford Science Publications. \MR{MR1419991 (98g:62001)}

\bibitem[LM07]{matus:2007}
Radim Ln{\v{e}}ni{\v{c}}ka and Franti{\v{s}}ek Mat{\'u}{\v{s}}, \emph{On
  {G}aussian conditional independent structures}, Kybernetika (Prague)
  \textbf{43} (2007), no.~3, 327--342. \MR{MR2362722 (2008j:60037)}

\bibitem[LPM01]{levitz:2001}
Michael Levitz, Michael~D. Perlman, and David Madigan, \emph{Separation and
  completeness properties for {AMP} chain graph {M}arkov models}, Ann. Statist.
  \textbf{29} (2001), no.~6, 1751--1784. \MR{MR1891745 (2003a:62104)}

\bibitem[Mat92]{matus:1992}
Franti{\v{s}}ek Mat{\'u}{\v{s}}, \emph{On equivalence of {M}arkov properties
  over undirected graphs}, J. Appl. Probab. \textbf{29} (1992), no.~3,
  745--749. \MR{MR1174448 (93g:60105)}

\bibitem[Mat05]{matus:2005}
\bysame, \emph{Conditional independence in {G}aussian vectors and rings of
  polynomials}, Proceedings of ``Conditionals, Information, and Inference" -
  WCII2002 (Berlin) (G.~Kern-Isberner, W.~R\"odder, and F.~Kulmann, eds.),
  Lecture Notes in Computer Science, no. 3301, Springer, 2005, pp.~152--161.

\bibitem[RS03]{richardson:2003}
Thomas~S. Richardson and Peter Spirtes, \emph{Causal inference via ancestral
  graph models}, Highly structured stochastic systems, Oxford Statist. Sci.
  Ser., vol.~27, Oxford Univ. Press, Oxford, 2003, With part A by Milan
  Studen{\'y} and part B by Jan T. A. Koster, pp.~83--113. \MR{MR2082407}

\bibitem[{\v{S}}im06a]{simecek:prague}
Petr {\v{S}}ime{\v{c}}ek, \emph{Classes of {G}aussian, discrete, and binary
  representable independence models have no finite characterization}, Prague
  Stochastics (Marie Huskova and Martin Janzura, eds.), Matfyzpress, Charles
  University, Prague, 2006, pp.~622–--632.

\bibitem[{\v{S}}im06b]{simecek:compstat}
\bysame, \emph{Gaussian representation of independence models over four random
  variables}, C{OMPSTAT} 2006---{P}roceedings in {C}omputational {S}tatistics,
  Physica, Heidelberg, 2006, pp.~1405--1412.

\bibitem[Stu92]{studeny:1992}
Milan Studen\'y, \emph{Conditional independence relations have no finite
  complete characterization}, Information Theory, Statistical Decision
  Functions and Random Processes. (Dordrecht - Boston - London)
  (J.\'A.~V\'i\v{s}ek S.~Kub\'ik, ed.), Transactions of the 11th Prague
  Conference, vol.~B, Kluwer, 1992, (also Academia, Prague), pp.~377--396.

\bibitem[Stu05]{studeny:2005}
\bysame, \emph{Probabilistic conditional independence structures}, Information
  Science and Statistics, Springer-Verlag, London, 2005.

\bibitem[Sul09]{sullivant:2009}
Seth Sullivant, \emph{Gaussian conditional independence relations have no
  finite complete characterization}, J. Pure Appl. Algebra \textbf{213} (2009),
  no.~8, 1502--1506. \MR{MR2517987}

\end{thebibliography}

\end{document}